\theoremstyle{plain}
\newtheorem{theorem}{Theorem}[section]
\newtheorem{lemma}[theorem]{Lemma}
\newtheorem{proposition}[theorem]{Proposition}
\newtheorem{corollary}[theorem]{Corollary}
\newtheorem{conjecture}[theorem]{Conjecture}
\newtheorem{example}[theorem]{Example}
\theoremstyle{definition}
\newtheorem{definition}[theorem]{Definition}
\theoremstyle{remark}
\newtheorem{remark}[theorem]{Remark}
\newcommand{\C}{\mathbf{C}}
\newcommand{\Z}{\mathbf{Z}}
\newcommand{\Q}{\mathbf{Q}}
\newcommand{\R}{\mathbf{R}}
\newcommand{\PP}{\mathbf{P}}
\newcommand{\h}{\mathcal{H}}	
\newcommand{\M}{\mathcal{M}}
\newcommand{\LL}{\widehat{\mathcal{L}}}
\newcommand{\Hom}{\mathrm{Hom}}
\newcommand{\Aut}{\mathrm{Aut}}
\newcommand{\dv}{\operatorname{div}}	
\newcommand{\dlog}{\operatorname{dlog}}	
\newcommand{\darg}{\operatorname{darg}}	
\newcommand{\Eis}{\mathrm{Eis}}	
\newcommand{\Res}{\operatorname{Res}}	
\newcommand{\GL}{\mathrm{GL}}
\newcommand{\SL}{\mathrm{SL}}
\newcommand{\Spec}{\operatorname{Spec}}
\DeclareMathOperator{\ord}{ord}
\DeclareMathOperator{\Gal}{Gal}
\renewcommand{\Re}{\text{Re}\,}
\renewcommand{\Im}{\text{Im}\,}
\newcommand{\sabcd}[4]{\left(\begin{smallmatrix}#1&#2\\#3& #4\end{smallmatrix}\right)}
\newcommand{\bolda}{{\boldsymbol a}}
\newcommand{\boldb}{{\boldsymbol b}}
\newcommand{\boldc}{{\boldsymbol c}}
\newcommand{\boldd}{{\boldsymbol d}}
\newcommand{\boldu}{{\boldsymbol u}}
\newcommand{\boldv}{{\boldsymbol v}}
\newcommand{\boldw}{{\boldsymbol w}}
\newcommand{\boldx}{{\boldsymbol x}}
\newcommand{\boldy}{{\boldsymbol y}}
\newcommand{\boldz}{{\boldsymbol z}}
\newcommand{\boldt}{{\boldsymbol t}}
\newcommand{\boldzero}{{\boldsymbol 0}}
	\title{On the $K_4$ group of modular curves}
	\author{Fran\c{c}ois Brunault}
	\date{\today}
\begin{document}

\begin{abstract}
We construct elements in the $K_4$ group of modular curves using the polylogarithmic complexes of weight 3 defined by Goncharov and De Jeu. The construction is uniform in the level and relies on new modular units arising as cross-ratios of division values of the Weierstra\ss{} $\wp$ function. These units provide explicit triangulations of the $3$-term relations in $K_2$, which in turn give rise to elements in $K_4$. Based on numerical computations and on results of Wang, we conjecture that these elements are proportional to the Beilinson elements defined via Eisenstein symbols.
\end{abstract}

\maketitle

\section{Introduction}

Algebraic $K$-theory is a fundamental invariant with deep relations to algebraic geometry and number theory. For example, it enters into the general conjectures of Beilinson on special values of $L$-functions of varieties over number fields \cite{Bei84, DS91, Nek94}. For a field $F$, the $K$-groups $K_0(F)$ and $K_1(F)$ are isomorphic to $\Z$ and $F^\times$, respectively. Moreover, Matsumoto's theorem gives a presentation of $K_2(F)$. But in general $K$-groups are very hard to compute. Goncharov and De Jeu have defined explicit polylogarithmic complexes, which are expected to compute the motivic cohomology of $F$, and thus the rational $K$-groups of $F$ (see Section~\ref{sec: goncharov de jeu}). The results of Goncharov and De Jeu on these complexes suggest that it should be possible to do explicit constructions in the motivic cohomology of algebraic varieties, in particular for varieties of arithmetic interest.

In this article, we investigate the motivic cohomology of the modular curve $Y(N)$ of level $N$ over $\Q$. The groups of interest are $H^2_\M(Y(N),\Q(n))$ for $n \geqslant 2$, which are isomorphic to the $K$-groups $K_{2n-2}^{(n)}(Y(N))$. We expect that there are constructions of modular nature in the relevant polylogarithmic complexes. For $n = 2$, taking the Milnor symbol of two modular units provides the so-called Beilinson--Kato elements, which have important applications in number theory: for example, they are used in Kato's construction of a Euler system for modular forms \cite{Kat04}. For $n \geqslant 3$, Beilinson has also constructed elements in $K_{2n-2}^{(n)}(Y(N))$ using Eisenstein symbols \cite{Bei86}. His construction does not use the polylogarithmic complex. The regulator of these elements can be computed in terms of $L$-functions of modular forms \cite{Bei86, Gea05, Wan20}.

However, the shape of the Beilinson elements is not well-suited for applications to the Mahler measure of certain polynomials (see Section \ref{subsec: mahler}). We present a new construction of explicit elements in the case $n=3$, in other words for the group $K_4^{(3)}(Y(N))$, using the polylogarithmic complex. These elements turn out to be more directly related to the Mahler measure, and this was one motivation for our work. It is of course interesting to try to compare this construction with Beilinson's construction. In this regard, we propose a conjecture, based on numerical computations, that the two kinds of elements are in fact proportional (see Conjecture \ref{main conj} below).

Let us outline briefly the ideas of our construction. The basic ingredients are the Siegel units $g_\boldx$, defined for $\boldx \in (\Z/N\Z)^2$, which are canonical modular units on the modular curve $Y(N)$ (see Definition \ref{def gx}). We prove the following $3$-term relations in the $K_2$ group of the modular curve (Theorem \ref{thm 3-term K2}).

\begin{theorem} \label{main thm 1}
For any $\bolda, \boldb, \boldc \in (\Z/N\Z)^2$ such that $\bolda + \boldb + \boldc = 0$, we have
\begin{equation} \label{eq main thm 1}
\{g_\bolda, g_\boldb\} + \{g_\boldb, g_\boldc\} + \{g_\boldc, g_\bolda\} = 0 \qquad \textrm{in } K_2(Y(N)) \otimes \Z\Bigl[\frac{1}{6N}\Bigr].
\end{equation}
\end{theorem}

This theorem was previously known with $\Q$-coefficients \cite{Bru08, Gon08, BBPS23}. Recently, Sharifi and Venkatesh \cite{SV24} constructed a Hecke equivariant map from the homology of $X_1(N)(\C)$ to $K_2(X_1(N)) \otimes \Z\bigl[\frac{1}{30N}\bigr]$, giving a new perspective on Theorem \ref{main thm 1}.

Our proof of Theorem \ref{main thm 1} differs from \cite{Gon08, BBPS23, SV24} and expands on the approach in \cite{Bru08}. The idea is to find explicit Steinberg relations underlying the relation \eqref{eq main thm 1}. To this end, we introduce in Section \ref{sec: uabcd} new modular units $u(\bolda, \boldb, \boldc, \boldd)$ with $\bolda, \boldb, \boldc, \boldd$ in $(\Z/N\Z)^2$, obtained as cross-ratios of $N$-division values of the Weierstra\ss{} $\wp$ function. A crucial property is that $1-u(\bolda, \boldb, \boldc, \boldd) = u(\bolda, \boldc, \boldb, \boldd)$ is also a modular unit. We then prove (Theorem~\ref{triangulation manin3}):

\begin{theorem} \label{main thm 2}
The relation \eqref{eq main thm 1} has a lift
\begin{equation*}
g_\bolda \wedge g_\boldb + g_\boldb \wedge g_\boldc + g_\boldc \wedge g_\bolda = \sum_i n_i \cdot u_i \wedge (1-u_i) \qquad \textrm{in } \Lambda^2 \mathcal{O}(Y(N))^\times \otimes \Z\Bigl[\frac{1}{6N}\Bigr],
\end{equation*}
where the $u_i$ are explicit modular units of the form $u(\boldx, \boldy, \boldz, \boldt)$, with explicit coefficients $n_i \in \Z\bigl[\frac{1}{2N}\bigr]$.
\end{theorem}

The modular unit $u(\bolda, \boldb, \boldc, \boldd)$ leads to the following additive relation between Siegel functions (see Corollary \ref{cor additive relations}):
\begin{equation} \label{eq additive Siegel}
\zeta \cdot g_{\boldc+\bolda} g_{\boldc-\bolda} g_{\boldd+\boldb} g_{\boldd-\boldb} + \zeta' \cdot g_{\boldb+\bolda} g_{\boldb-\bolda} g_{\boldd+\boldc} g_{\boldd-\boldc} = g_{\boldc+\boldb} g_{\boldc-\boldb} g_{\boldd+\bolda} g_{\boldd-\bolda},
\end{equation}
where $\zeta$ and $\zeta'$ are roots of unity depending on $\bolda, \boldb, \boldc, \boldd$. In our approach, this can be seen as the key identity to prove Theorem \ref{main thm 1}.

Building on Theorem \ref{main thm 2}, we show that the symbol
\begin{equation*}
\tilde{\xi}(\bolda, \boldb) = \sum_i n_i \cdot \{u_i\}_2 \otimes \Bigl( \frac{g_\boldb}{g_\bolda} \Bigr)
\end{equation*}
is a cocycle in the weight $3$ polylogarithmic complex of $Y(N)$ (see Section \ref{sec: construction}). Using results of De Jeu \cite{Jeu96}, this cocycle gives rise to an element $\xi(\bolda, \boldb)$ in $K_4^{(3)}(Y(N))$. We also discuss how to get elements in $K_4^{(3)}(X(N))$, where $X(N)$ is the compactification of $Y(N)$.

We devise in Section \ref{sec: num} a method to compute numerically the image of $\xi(\bolda, \boldb)$ under the Beilinson regulator map. This allows us, in Section \ref{sec: num beilinson}, to check numerically Beilinson's conjecture on $L(E,3)$ for every elliptic curve $E$ over $\Q$ of conductor $N \leqslant 50$. Similar computations lead us to the following conjecture (see Conjecture \ref{conj xi Eis001}).

\begin{conjecture} \label{main conj}
Let $\bolda, \boldb \in (\Z/N\Z)^2$, and let $\Eis^{0,0,1}(\bolda, \boldb)$ be the associated Beilinson element in $K_4^{(3)}(Y(N))$. Then we have
\begin{equation*}
\xi(\bolda, \boldb) = \pm \frac{N^2}{3} \Eis^{0,0,1}(\bolda, \boldb).
\end{equation*}
\end{conjecture}

\subsection{Overview}
Here is a brief outline of the article. We define in Section \ref{sec: uabcd} the modular units $u(\bolda, \boldb, \boldc, \boldd)$. We prove in Section \ref{sec: 3-term} the $3$-term relations in $K_2(Y(N)) \otimes \Z\bigl[\frac{1}{6N}\bigr]$ and their lifts. After recalling the polylogarithmic complexes in Section \ref{sec: goncharov de jeu}, we construct the elements in $K_4^{(3)}(Y(N))$ in Section \ref{sec: construction} and discuss their properties. We compute numerically their regulators in Sections~\ref{sec: num} and~\ref{sec: num beilinson}, leading to the conjectural comparison with the Beilinson elements in Section \ref{sec: comparison}. Finally, we give in Section \ref{sec: perspectives} some perspectives arising from this work. In particular, we mention how the constructions of this article have been used recently in \cite{Bru23, BZ23} to prove a conjecture relating the Mahler measure of the polynomial $(1+x)(1+y)+z$ and $L(E,3)$, where $E$ is an elliptic curve of conductor $15$. Throughout the text, we provide a ``running example'' for the modular curve $X_1(15)$, which is isomorphic to $E$.

\;

\textbf{Acknowledgements.} I am grateful to the referee for pointing out the additive relation~\eqref{eq additive Siegel} between Siegel functions and the link with Fay's trisecant identity (Remark \ref{rk 4var theta}), as well as for the remarks that have enriched the article and improved the exposition.
I would like to thank Emmanuel Lecouturier for asking me for a more natural proof of the 3-term relations in $K_2$ compared to my earlier proof in \cite{Bru08}, and for his help in the last step of the proof of Theorem~\ref{triangulation manin3}.
I am also grateful to Spencer Bloch who asked me for an explicit triangulation of the 3-term relations, to Rob de Jeu for reading carefully my manuscript and the many useful comments, and to Andrea Surroca for interesting discussions around the $S$-unit equation.
Finally, I would like to thank my colleagues for fruitful exchanges, among them Vasily Golyshev, Matt Kerr and the International Groupe de Travail on differential equations in Paris, Matilde Lal\'{\i}n, Riccardo Pengo, Jun Wang, Weijia Wang and Wadim Zudilin.

\section{Modular units and cross-ratios} \label{sec: uabcd}

We denote by $\h = \{\tau \in \C : \Im(\tau)>0\}$ the Poincaré upper half-plane. Let $N > 1$ be an integer. For any vector $\bolda=(a_1,a_2) \in (\Z/N\Z)^2$, $\bolda \neq (0,0)$, we define
\begin{equation*}
\wp_\bolda(\tau) = \wp\Bigl(\frac{\tilde{a}_1 \tau + \tilde{a}_2}{N}, \tau \Bigr) \qquad (\tau \in \h),
\end{equation*}
where $\wp(z,\tau)$ is the Weierstra\ss{} $\wp$ function \cite{CS17, KK25, Lan87}, and $\tilde{a}_1, \tilde{a}_2$ are representatives of $a_1,a_2$ in $\Z$. The function $\wp_\bolda$ is well-defined because $\wp(z, \tau)$ is doubly periodic with respect to the lattice $\Z \tau + \Z$. Since $\wp(z, \tau)$ is an even function of $z$, we also have $\wp_{-\bolda} = \wp_\bolda$, so that $\wp_\bolda$ makes sense for $\bolda \in (\Z/N\Z)^2/\pm 1$, $\bolda \neq (0,0)$.

\begin{definition}
Let $\bolda, \boldb, \boldc, \boldd$ be four pairwise distinct elements of $(\Z/N\Z)^2/\pm 1$. We define the function $u(\bolda,\boldb,\boldc,\boldd)$ on $\h$ as the cross-ratio of the functions $\wp_\bolda,\wp_\boldb,\wp_\boldc,\wp_\boldd$:
\begin{equation*}
u(\bolda,\boldb,\boldc,\boldd) = [\wp_\bolda, \wp_\boldb, \wp_\boldc, \wp_\boldd] = \frac{\wp_\boldc - \wp_\bolda}{\wp_\boldc - \wp_\boldb} \Big/ \frac{\wp_\boldd - \wp_\bolda}{\wp_\boldd - \wp_\boldb}.
\end{equation*}
\end{definition}

If one of the indices $\bolda, \boldb, \boldc, \boldd$ is equal to $\boldzero = (0,0)$, then $\wp_{\boldzero}$ has to be interpreted as $\infty$ (this is consistent with the fact that the $\wp$ function has a pole at the origin). For example, we have
\begin{equation*}
u(\boldzero, \boldb, \boldc, \boldd) = \frac{\wp_\boldd - \wp_\boldb}{\wp_\boldc - \wp_\boldb}.
\end{equation*}

Recall that a modular unit for a congruence subgroup $\Gamma$ of $\SL_2(\Z)$ is a holomorphic function on $\h$ which is non-vanishing, invariant under $\Gamma$ and meromorphic at the cusps of~$\Gamma \backslash \h$. Let $\Gamma(N) = \ker(\SL_2(\Z) \to \SL_2(\Z/N\Z))$ denote the principal congruence subgroup of level $N$.

\begin{lemma} \label{lem uabcd mod unit}
If $\bolda, \boldb, \boldc, \boldd$ are pairwise distinct elements of $(\Z/N\Z)^2/\pm 1$, then the function $u(\bolda, \boldb, \boldc, \boldd)$ is a modular unit for $\Gamma(N)$.
\end{lemma}

\begin{proof}
For a fixed $\tau$ in $\h$, we know from the theory of elliptic functions that $\wp(z, \tau) = \wp(z', \tau)$ if and only if $z' = \pm z$ modulo $\Z \tau + \Z$ \cite[Corollary 2.1.9]{CS17}. Since $\bolda, \boldb, \boldc, \boldd$ are distinct in $(\Z/N\Z)^2/\pm 1$, the function $u(\bolda, \boldb, \boldc, \boldd)$ has no zeros or poles in $\h$. Moreover, the modular behaviour of the $\wp$ function \cite[Section I.4.1, eq.~(4.9)]{KK25} implies
\begin{equation} \label{wpx gamma}
\wp_\boldx(\gamma \tau) = (c\tau+d)^2 \wp_{\boldx \gamma}(\tau) \qquad (\gamma = \sabcd{a}{b}{c}{d} \in \SL_2(\Z)),
\end{equation}
where $\boldx \in (\Z/N\Z)^2 \setminus \{\boldzero\}$ is seen as a row vector. In particular $\wp_\boldx$ is modular of weight~$2$ for $\Gamma(N)$, and in fact it is a modular form \cite[Chapter VII, Theorem 7]{Sch74}. It follows that $u(\bolda, \boldb, \boldc, \boldd)$ is invariant under $\Gamma(N)$ and is meromorphic at the cusps.
\end{proof}

Modular units of the form $(\wp_\bolda - \wp_\boldb)/(\wp_\boldc - \wp_\boldd)$ are called Weierstra\ss{} units in \cite[Chapter~2, Section~6]{KL81}. Here $u(\bolda, \boldb, \boldc, \boldd)$ is a quotient of two Weierstra\ss{} units, but is not a priori a Weierstra\ss{} unit. We note that Bolbachan also considered the cross-ratio of values of the $\wp$ function in relation with the elliptic dilogarithm \cite{Bol22}. The cross-ratio is viewed there as an elliptic function, not as a modular one.

Our next goal is to express the modular units $u(\bolda, \boldb, \boldc, \boldd)$ in terms of the \emph{Siegel functions}, whose definition we now recall \cite[1.9]{Kat04}.

\begin{definition} \label{def gx}
For $\boldx = (x_1, x_2) \in (\Z/N\Z)^2$, $\boldx \neq (0,0)$, the Siegel function $g_\boldx$ is defined by the infinite product
\begin{equation} \label{eq def gx}
g_\boldx(\tau) = q^{\frac12 B_2(\tilde{x}_1/N)} \prod_{n \geqslant 0} (1-q^{n+\tilde{x}_1/N} \zeta_N^{x_2}) \prod_{n \geqslant 1} (1-q^{n-\tilde{x}_1/N} \zeta_N^{-x_2}) \qquad (\tau \in \h),
\end{equation}
where $\tilde{x}_1$ is the representative of $x_1$ satisfying $0 \leqslant \tilde{x}_1 \leqslant N-1$, $B_2(t) = t^2-t+1/6$ is the second Bernoulli polynomial, $q^\alpha = e^{2\pi i \alpha \tau}$ and $\zeta_N = e^{2\pi i/N}$. By convention, we put $g_{\boldzero} = 1$.
\end{definition}

The function $g_\boldx^{12N}$ is a modular unit for $\Gamma(N)$ \cite[Chapter 2, Theorem 1.2]{KL81} (a different normalisation of $g_\boldx$ is used there, but this does not affect the modularity property).

We will need the following theta function
\begin{equation*}
\theta(z,\tau) = \prod_{n=0}^\infty (1- e^{2\pi i(n\tau+z)}) (1-e^{2\pi i((n+1)\tau -z)}) \qquad (z \in \C, \, \tau \in \h).
\end{equation*}
Up to a standard factor, this is the first (odd) Jacobi theta function (see Remark \ref{rk 4var theta}). Note that by definition, the Siegel function $g_\boldx$ is essentially the evaluation of $\theta(z,\tau)$ at a point of $\frac{1}{N}(\Z\tau+\Z)$. The following proposition is crucial in the proof of Theorem \ref{main thm 2}.

\begin{proposition} \label{pro uabcd gx}
Let $\bolda, \boldb, \boldc, \boldd$ be elements of $(\Z/N\Z)^2$ whose images in $(\Z/N\Z)^2/\pm 1$ are pairwise distinct. We have the following relation between functions on $\h$:
\begin{equation} \label{eq uabcd gx}
u(\bolda, \boldb, \boldc, \boldd) = \zeta \cdot \frac{g_{\boldc+\bolda} g_{\boldc-\bolda} g_{\boldd+\boldb} g_{\boldd-\boldb}}{g_{\boldc+\boldb} g_{\boldc-\boldb} g_{\boldd+\bolda} g_{\boldd-\bolda}},
\end{equation}
where $\zeta$ is a $N'$th root of unity depending on $\bolda, \boldb, \boldc, \boldd$, with $N' = N$ if $N$ is even, and $N' = 2N$ if $N$ is odd.
\end{proposition}

\begin{proof}
We first prove such a relation at the level of the theta function. We will fix $\tau \in \h$ and write $\wp(z) = \wp(z,\tau)$ and $\theta(z) = \theta(z,\tau)$. For $x,y$ in $\C \setminus (\Z\tau+\Z)$, we have by \cite[Corollary~I.5.6(a)]{Sil94}
\begin{equation*}
\wp(x)-\wp(y) = - \frac{\sigma(x+y) \sigma(x-y)}{\sigma(x)^2 \sigma(y)^2},
\end{equation*}
where $\sigma(z) = \sigma(z,\tau)$ is the Weierstra\ss{} $\sigma$ function. We deduce
\begin{equation*}
[\wp(x), \wp(y), \wp(z), \wp(t)] = \frac{\sigma(z+x) \sigma(z-x) \sigma(t+y) \sigma(t-y)}{\sigma(z+y) \sigma(z-y) \sigma(t+x) \sigma(t-x)}.
\end{equation*}
Using the product expansion for the $\sigma$ function \cite[Theorem I.6.4]{Sil94}, we get
\begin{equation} \label{eq uxyzt}
[\wp(x), \wp(y), \wp(z), \wp(t)] = \frac{\theta(z+x) \theta(z-x) \theta(t+y) \theta(t-y)}{\theta(z+y) \theta(z-y) \theta(t+x) \theta(t-x)}.
\end{equation}
By continuity, this formula still holds when one of the variables $x,y,z,t$ belongs to $\Z\tau+\Z$. It remains to specialise \eqref{eq uxyzt} at $N$-torsion points of the elliptic curve $\C/(\Z\tau+\Z)$. Let $z = \frac{1}{N}(z_1 \tau + z_2)$ with $(z_1, z_2) \in \Z^2 \setminus N\Z^2$. By \cite[eq.~(4)]{Yan04}, we have
\begin{equation} \label{eq thetax gx}
\theta(z, \tau) = q^{-B_2(z_1/N)/2} (-\zeta_N^{-z_2})^{\lfloor z_1/N \rfloor} g_{\bar{z}_1, \bar{z}_2},
\end{equation}
where $\lfloor \cdot \rfloor$ denotes the integer part. Using \eqref{eq uxyzt} with lifts $x,y,z,t$ in $\C$ of the torsion points of $\C/(\Z\tau+\Z)$ associated to $\bolda, \boldb, \boldc, \boldd$, together with the identity
\begin{align*}
& B_2(X+Z)+B_2(X-Z)+B_2(Y+T)+B_2(Y-T) \\
& \qquad -B_2(Y+Z)-B_2(Y-Z)-B_2(X+T)-B_2(X-T)=0,
\end{align*}
we obtain the proposition.
\end{proof}

\begin{remark}
The root of unity $\zeta$ in \eqref{eq uabcd gx} can be determined from \eqref{eq thetax gx}.
\end{remark}

\begin{remark}
We would like to point out an error in \cite{Bru08}: the first equation on p.~288 is off by a $N'$th root of unity. This root of unity can be determined from \eqref{eq thetax gx}, using the last equation on p.~287 of \emph{op.~cit.} and noting that $\gamma(q,a_1,a_2) = \theta\bigl(\frac{1}{N}(a_1 \tau + a_2), \tau\bigr)$.
\end{remark}

The definition of $u(\bolda, \boldb, \boldc, \boldd)$ as a cross-ratio makes it clear that
\begin{equation} \label{eq uabcd uacbd}
u(\bolda, \boldb, \boldc, \boldd) + u(\bolda, \boldc, \boldb, \boldd) = 1.
\end{equation}
Thanks to Proposition \ref{pro uabcd gx}, this produces non-trivial additive relations between products of Siegel functions.

\begin{corollary} \label{cor additive relations}
Let $\bolda, \boldb, \boldc, \boldd$ be elements of $(\Z/N\Z)^2$ whose images in $(\Z/N\Z)^2/\pm 1$ are pairwise distinct. Then
\begin{equation*}
\zeta \cdot g_{\boldc+\bolda} g_{\boldc-\bolda} g_{\boldd+\boldb} g_{\boldd-\boldb} + \zeta' \cdot g_{\boldb+\bolda} g_{\boldb-\bolda} g_{\boldd+\boldc} g_{\boldd-\boldc} = g_{\boldc+\boldb} g_{\boldc-\boldb} g_{\boldd+\bolda} g_{\boldd-\bolda},
\end{equation*}
where $\zeta$ and $\zeta'$ are $N'$th roots of unity depending on $\bolda, \boldb, \boldc, \boldd$, with $N' = N$ if $N$ is even, and $N' = 2N$ if $N$ is odd.
\end{corollary}

\begin{proof}
This follows from \eqref{eq uabcd gx} and \eqref{eq uabcd uacbd}, using also the elementary relation
\begin{equation} \label{eq g-x}
g_{-x_1, -x_2} = \begin{cases} g_{x_1, x_2} & \textrm{if } x_1 \neq 0, \\
- \zeta_N^{-x_2} g_{x_1, x_2} & \textrm{if } x_1 = 0.
\end{cases} \qedhere
\end{equation}
\end{proof}

\begin{remark} \label{rk 4var theta}
Corollary \ref{cor additive relations} holds at the level of the theta function. Indeed, the equation \eqref{eq uxyzt} implies that for any $x,y,z,t$ in $\C$, we have
\begin{equation*}
\begin{split}
& e^{2\pi i (z-y)} \theta(y+x) \theta(y-x) \theta(t+z) \theta(t-z) + \theta(z+y) \theta(z-y) \theta(t+x) \theta(t-x) \\
& = \theta(z+x) \theta(z-x) \theta(t+y) \theta(t-y),
\end{split}
\end{equation*}
where we used the identity $\theta(-z) = -e^{-2\pi iz} \theta(z)$. In terms of the Jacobi theta function $\vartheta_{11}(z, \tau) = -i e^{\pi i(\tau/6-z)} \eta(\tau) \theta(z,\tau)$, where $\eta(\tau)$ is the Dedekind eta function, this becomes
\begin{equation} \label{eq 4var theta11}
\begin{split}
& \vartheta_{11}(y+x) \vartheta_{11}(y-x) \vartheta_{11}(t+z) \vartheta_{11}(t-z) + \vartheta_{11}(z+y) \vartheta_{11}(z-y) \vartheta_{11}(t+x) \vartheta_{11}(t-x) \\
& = \vartheta_{11}(z+x) \vartheta_{11}(z-x) \vartheta_{11}(t+y) \vartheta_{11}(t-y).
\end{split}
\end{equation}
One can show that \eqref{eq 4var theta11} is a consequence of Fay's trisecant identity for the elliptic curve $\C/(\Z\tau+\Z)$. Namely, taking $\alpha = (\tau+1)/2$ and $z = \alpha - P_1 - P_3$ in the last displayed equation of \cite[p.~354]{FK92}, we recover \eqref{eq 4var theta11}. This gives another proof of Corollary \ref{cor additive relations}.
\end{remark}

We now show that the modular units $u(\bolda, \boldb, \boldc, \boldd)$ are defined over $\Q$. For $N \geqslant 3$, let $Y(N)$ be the modular curve over $\Q$ classifying elliptic curves with level $N$ structure as in \cite[Section 1]{Kat04}. For $\tau \in \h$, we denote by $\nu(\tau) \in Y(N)(\C)$ the isomorphism class of the elliptic curve $\C/(\Z\tau+\Z)$ with the level structure $(\tau/N, 1/N)$. The group $\GL_2(\Z/N\Z)$ acts from the left on $Y(N)$ by the rule $\gamma \cdot (E,e_1,e_2) = (E,e'_1,e'_2)$ with $(\begin{smallmatrix} e'_1 \\ e'_2 \end{smallmatrix}) = \gamma (\begin{smallmatrix} e_1 \\ e_2 \end{smallmatrix})$ \cite[1.6]{Kat04}. We have an isomorphism of Riemann surfaces $(\Z/N\Z)^\times \times (\Gamma(N) \backslash \h) \xrightarrow{\cong} Y(N)(\C)$ given by $(\lambda, \tau) \mapsto \sabcd{1}{0}{0}{\lambda} \nu(\tau)$. For $N=2$, the moduli problem is not representable and we define $Y(2) = Y(2, 2)$ as a quotient of $Y(4)$ as in \cite[2.1]{Kat04}. It also has an action of $\GL_2(\Z/2\Z)$.

\begin{lemma} \label{lem OYN}
For any $N \geqslant 2$, the group $\mathcal{O}(Y(N))^\times$ can be identified with the group of modular units for $\Gamma(N)$ whose Fourier expansion at infinity has coefficients in $\Q(\zeta_N)$. Moreover, under this identification, the natural (right) action of $\GL_2(\Z/N\Z)$ on $\mathcal{O}(Y(N))^\times$ is described explicitly as follows:
\begin{enumerate}
\item For any $\gamma \in \SL_2(\Z/N\Z)$, we have $u | \gamma = u \circ \tilde{\gamma}$ for any representative $\tilde{\gamma}$ of $\gamma$ in $\SL_2(\Z)$.
\item For any $\lambda \in (\Z/N\Z)^\times$, we have $u | (\begin{smallmatrix} 1 & 0 \\ 0 & \lambda \end{smallmatrix}) = \sigma_\lambda(u)$, where $\sigma_\lambda(u)$ is obtained from $u$ by applying the automorphism $\zeta_N \mapsto \zeta_N^\lambda$ to the Fourier coefficients of $u$.
\end{enumerate}
\end{lemma}

Lemma \ref{lem OYN} is immediate if $Y(N)$ is defined using function fields, see \cite[Chapter~2, Section~2]{KL81} and \cite[Chapter 6]{Lan87}. For the equivalence between the two definitions of $Y(N)$, we refer to \cite{DR73}. For the convience of the reader, we sketch a direct proof of the lemma.

\begin{proof}[Sketch of proof of Lemma \ref{lem OYN}]
We use the fact that the Fourier expansion of a modular function at a cusp is obtained by ``evaluating at the Tate curve'' \cite[Section VII.3]{DR73}.
Consider the Tate curve $\mathbf{G}_m/q^\Z$ endowed with the level structure $(q^{1/N}, \zeta_N)$ (see \cite[VII]{DR73} and \cite[Section 8.8]{KM85}).
By the classical formulas for the parametrisation of the Tate curve \cite[Theorem V.1.1]{Sil94}, the triple $T = (\mathbf{G}_m/q^\Z, q^{1/N}, \zeta_N)$ is defined over the subring $R$ of $\Q(\zeta_N)((q^{1/N}))$ consisting of power series converging on the punctured unit disk $\{0 < |q^{1/N}| < 1\}$. Hence, for $N \geqslant 3$, the triple $T$ defines an element of $Y(N)(R)$.
The evaluation at a point $q = e^{2\pi i\tau}$ with $\tau \in \h$ gives a ring morphism $R \to \C$, and the image of $T$ in $Y(N)(\C)$ is $\nu(\tau)$.
Therefore, given $f \in \mathcal{O}(Y(N))$, the Fourier expansion of $f \circ \nu$ is $f(T)$ and lies in $\Q(\zeta_N)((q^{1/N}))$. By making explicit the action of $\GL_2(\Z/N\Z)$ on $T$ and the natural action of $\Aut(\C)$ on the power series $f(T)$, one shows that the map $f \mapsto f \circ \nu$ provides the desired identification.

For $N=2$, the affine ring $\mathcal{O}(Y(2))$ is obtained from $\mathcal{O}(Y(4))$ by taking invariants, and the result is deduced from the case $N=4$.
\end{proof}

For example, for any $\boldx \in (\Z/N\Z)^2$, $\boldx \neq \boldzero$, the function $g_\boldx^{12N}$ is a modular unit for $\Gamma(N)$ with Fourier coefficients in $\Q(\zeta_N)$. Lemma \ref{lem OYN} implies that $g_\boldx$ defines an element of $\mathcal{O}(Y(N))^\times \otimes \Z\bigl[\frac{1}{6N}\bigr]$. We call this element a \emph{Siegel unit} on $Y(N)$, in order to avoid confusion with the Siegel function. The action of $\GL_2(\Z/N\Z)$ on Siegel units is given by the relation $g_\boldx | \gamma = g_{\boldx \gamma}$ in $\mathcal{O}(Y(N))^\times \otimes \Z\bigl[\frac{1}{6N}\bigr]$. This follows from the transformation formula for the Siegel functions \cite[Proposition 5.1]{BN18} applied to the generators $\sabcd{1}{1}{0}{1}$ and $\sabcd{0}{-1}{1}{0}$ of $\SL_2(\Z)$.

\begin{proposition} \label{pro uabcd transform}
The modular unit $u(\bolda, \boldb, \boldc, \boldd)$ defines an element of $\mathcal{O}(Y(N))^\times$. Moreover, we have the following transformation formula:
\begin{equation*}
u(\bolda, \boldb, \boldc, \boldd) | \gamma = u(\bolda\gamma, \boldb\gamma, \boldc\gamma, \boldd\gamma) \qquad (\gamma \in \GL_2(\Z/N\Z)).
\end{equation*}
\end{proposition}

\begin{proof}
Equation \eqref{eq uxyzt} shows that the Fourier coefficients of $u(\bolda, \boldb, \boldc, \boldd)$ belong to $\Q(\zeta_N)$, so by Lemma \ref{lem OYN}, this modular unit defines an element of $\mathcal{O}(Y(N))^\times$. The transformation formula for $\gamma \in \SL_2(\Z/N\Z)$ follows from \eqref{wpx gamma}. The one for $\gamma = (\begin{smallmatrix} 1 & 0 \\ 0 & \lambda \end{smallmatrix})$ with $\lambda \in (\Z/N\Z)^\times$ follows from \eqref{eq uxyzt}, noting that the automorphism $\zeta_N \mapsto \zeta_N^\lambda$ transforms $\theta(\frac{1}{N}(x_1 \tau + x_2))$ into $\theta(\frac{1}{N}(x_1 \tau + \tilde{\lambda} x_2))$ for any $(x_1, x_2) \in \Z^2$, where $\tilde{\lambda}$ is a representative of $\lambda$ in $\Z$.
\end{proof}

We can specialise the units $u(\bolda, \boldb, \boldc, \boldd)$ to modular units for the group
\begin{equation*}
\Gamma_1(N) = \bigl\{ \gamma \in \SL_2(\Z) : \, \gamma \equiv \sabcd{1}{*}{0}{1} \mod N\bigr\}.
\end{equation*}

\begin{definition} \label{def u1abcd}
For any pairwise distinct elements $a,b,c,d$ of $(\Z/N\Z)/\pm 1$, we define
\begin{equation*}
u_1(a,b,c,d) = u((0,a), (0,b), (0,c), (0,d)).
\end{equation*}
\end{definition}

The transformation formula \eqref{wpx gamma} shows that $u_1(a,b,c,d)$ is a modular unit for $\Gamma_1(N)$. Proposition \ref{pro uabcd gx} also holds for these modular units. A closer look at the proof reveals that the root of unity $\zeta$ is $1$ in this case, so we get the cleaner identity between functions on $\h$
\begin{equation} \label{eq u1 g0x}
u_1(a, b, c, d) = \frac{g_{0, c+a} g_{0, c-a} g_{0, d+b} g_{0, d-b}}{g_{0, c+b} g_{0, c-b} g_{0, d+a} g_{0, d-a}}.
\end{equation}

The modular curve $Y_1(N)$ over $\Q$ is the quotient of $Y(N)$ by the subgroup $\{\sabcd{*}{*}{0}{1}\}$ of $\GL_2(\Z/N\Z)$. In the notation of \cite[2.1]{Kat04}, we have $Y_1(N) = Y(1,N)$, and $Y_1(N)$ agrees with the model of~\cite[Theorem 8.2.1]{DI95} for $N \geqslant 4$ since it represents the same moduli problem. The above description of the complex points of $Y(N)$ induces an isomorphism of Riemann surfaces $Y_1(N)(\C) \cong \Gamma_1(N) \backslash \h$.

\begin{proposition} \label{pro u1abcd}
The modular unit $u_1(a,b,c,d)$ defines an element of $\mathcal{O}(Y_1(N))^\times$.
\end{proposition}

\begin{proof}
This follows from Proposition \ref{pro uabcd transform}, noting that row vectors of the form $(0,x)$ with $x \in \Z/N\Z$ are right-invariant under the group $\{\sabcd{*}{*}{0}{1}\}$.
\end{proof}

\begin{example} \label{ex1}
We introduce our running example. Let $E$ be the closure of the affine curve $(1+x)^2 (1+y)^2 = xy$ in $\PP^1_\Q \times \PP^1_\Q$. Together with the rational point $(-1,\infty)$, this is an elliptic curve. One can show \cite{Bru23} that $E$ is isomorphic to the modular curve $X_1(15)$, the isomorphism sending $(-1,\infty)$ to the cusp $0$. Moreover
\begin{equation*}
x = -u_1(1,2,3,7) = - \zeta_{15} \cdot \frac{g_{0,2} g_{0,4}}{g_{0,1} g_{0,7}} \qquad
y = -u_1(2,4,6,1) = - \zeta_{15}^{-4} \cdot \frac{g_{0,4} g_{0,7}}{g_{0,1} g_{0,2}}.
\end{equation*}
These expressions in terms of Siegel functions follow from Proposition \ref{pro uabcd gx}. Moreover, the relation $u_1(1,2,3,7) + u_1(1,3,2,7) = 1$ yields the non-trivial identity
\begin{equation*}
\zeta_{15} \cdot g_{0,2} g_{0,4} g_{0,6} - \zeta_{15}^{-1} \cdot g_{0,1} g_{0,3} g_{0,4} = g_{0,1} g_{0,6} g_{0,7}.
\end{equation*}
Similarly, $u_1(2,4,6,1) + u_1(2,6,4,1)=1$ leads to an additive relation between products of three Siegel functions. Of course, Corollary \ref{cor additive relations} gives many more relations. We were not able to find a relation of the form
\begin{equation*}
\zeta \cdot g_{0,x} g_{0,y} + \zeta' \cdot g_{0,z} g_{0,t} = g_{0,u} g_{0,v}
\end{equation*}
at any level $N \leqslant 100$.
\end{example}

To conclude this section, we emphasise two properties of the modular units $u(\bolda, \boldb, \boldc, \boldd)$ that are crucial for our construction of $K_4$ elements in Section \ref{sec: construction}:

\begin{enumerate}
\item In Proposition \ref{pro uabcd gx}, the functions $u(\bolda, \boldb, \boldc, \boldd)$ are true modular units for $\Gamma(N)$, not just roots of modular units like the Siegel functions $g_\boldx$.
\item As witnessed by \eqref{eq uabcd uacbd}, the modular units $u = u(\bolda, \boldb, \boldc, \boldd)$ have the non-trivial property that $1-u$ is also a modular unit. Equivalently, they are solutions to the $S$-unit equation in the function field of $Y(N)$, where $S$ is the set of cusps (see Section \ref{subsec: de jeu curves}).
\end{enumerate}

\section{The $3$-term relations in $K_2$ of modular curves} \label{sec: 3-term}

We prove in this section $3$-term relations in the group $K_2$ of the modular curve $Y(N)$, together with a lift of these relations in Theorem \ref{triangulation manin3}.

We recall that there is a canonical antisymmetric bilinear map $\mathcal{O}(Y(N))^\times \times \mathcal{O}(Y(N))^\times \to K_2(Y(N))$ sending two modular units $u,v$ to the Milnor symbol $\{u,v\}$. As mentioned after Lemma \ref{lem OYN}, we may see the Siegel functions $g_\boldx$ as elements of $\mathcal{O}(Y(N))^\times \otimes \Z\bigl[\frac{1}{6N}\bigr]$, and we do so in the rest of this article.

\begin{theorem} \label{thm 3-term K2}
For any $\bolda, \boldb, \boldc \in (\Z/N\Z)^2$ such that $\bolda + \boldb + \boldc = \boldzero$, we have
\begin{equation} \label{eq 3-term K2}
\{g_\bolda, g_\boldb\} + \{g_\boldb, g_\boldc\} + \{g_\boldc, g_\bolda\} = 0 \qquad \textrm{in } K_2(Y(N)) \otimes \Z\Bigl[\frac{1}{6N}\Bigr].
\end{equation}
\end{theorem}

Theorem \ref{thm 3-term K2} can be seen as an analogue of the Manin $3$-term relations for modular symbols \cite{Man72}. Namely, fix a congruence subgroup $\Gamma$ of $\SL_2(\Z)$, and for any $h \in \SL_2(\Z)$, write $[h]$ for the hyperbolic geodesic from $h 0$ to $h \infty$ in the compactification $X_\Gamma = \Gamma \backslash (\h \cup \PP^1(\Q))$ of the modular curve $\Gamma \backslash \h$. Then, in the homology of $X_\Gamma$ relative to the cusps, we have the 3-term relation $[h] + [h t] + [h t^2] = 0$, where $t$ is the matrix $\sabcd{0}{-1}{1}{-1}$, which has order $3$. Now, for a matrix $M \in M_2(\Z/N\Z)$, define $\rho(M) = \{g_\bolda, g_\boldb\}$, where $\bolda$ and $\boldb$ are the two rows of~$M$. Then Theorem \ref{thm 3-term K2} is equivalent to $\rho(M) + \rho(tM) + \rho(t^2 M) = 0$ for any $M$.

Theorem \ref{thm 3-term K2} was previously known with $\Q$-coefficients \cite{Bru08, Gon08, BBPS23}. Recently, Sharifi and Venkatesh \cite{SV24} have constructed a Hecke equivariant map from the homology of $X_1(N)(\C)$ to $K_2(X_1(N)) \otimes \Z\bigl[\frac{1}{30N}\bigr]$, which gives a conceptual view on these $3$-term relations. This has connections to the group $K_2$ of the cyclotomic ring $\Z[\zeta_N]$ and to conjectures of Sharifi relating the modular curve $X_1(N)$ and the arithmetic of $\Q(\zeta_N)$ \cite{Bus08, FK24, Sha11}.

Our strategy to prove Theorem \ref{thm 3-term K2} builds on ideas in \cite{Bru08}. For each modular unit $u = u(\bolda, \boldb, \boldc, \boldd)$, the function $1-u = u(\bolda,\boldc,\boldb,\boldd)$ is also a modular unit, and we have the relation $\{u, 1-u\} = 0$ in $K_2(Y(N))$. Thanks to Proposition \ref{pro uabcd gx}, this generates plenty of linear dependence relations between the symbols $\{g_\boldx, g_\boldy\}$. We show that these relations are enough to prove \eqref{eq 3-term K2}. In fact, we can keep track of the relations $\{u, 1-u\} = 0$, thereby obtaining a lift of Theorem \ref{thm 3-term K2} in the exterior square of the group $\mathcal{O}(Y(N))^\times \otimes \Z\bigl[\frac{1}{6N}\bigr]$. To state this result, we introduce the following notation.

\begin{definition}
For any pairwise distinct elements $\bolda, \boldb, \boldc, \boldd$ of $(\Z/N\Z)^2/\pm 1$, define
\begin{equation*}
\delta(\bolda, \boldb, \boldc, \boldd) = u(\bolda, \boldb, \boldc, \boldd) \wedge u(\bolda, \boldc, \boldb, \boldd) \in \Lambda^2 \mathcal{O}(Y(N))^\times \otimes \Z\Bigl[\frac{1}{6N}\Bigr].
\end{equation*}
If $\bolda, \boldb, \boldc, \boldd$ are not pairwise distinct, we put $\delta(\bolda, \boldb, \boldc, \boldd)=0$.
\end{definition}

Theorem \ref{thm 3-term K2} is now a consequence of the following theorem, which is the main result of this section.

\begin{theorem} \label{triangulation manin3}
Let $G$ be a subgroup of $(\Z/N\Z)^2$, and let $\bolda, \boldb, \boldc \in G$ with $\bolda + \boldb + \boldc = \boldzero$. We have the following equality in $\Lambda^2 \mathcal{O}(Y(N))^\times \otimes \Z\bigl[\frac{1}{6N}\bigr]$:
\begin{equation} \label{eq manin3}
\begin{split}
g_\bolda \wedge g_\boldb + g_\boldb \wedge g_\boldc & + g_\boldc \wedge g_\bolda = \frac{1}{|G|} \sum_{\boldx \in G} \delta(\boldzero, \boldx, \bolda-\boldx, \boldb+\boldx) \\
& - \frac1{4|G|^2} \sum_{\boldx,\boldy \in G} \delta(\boldzero, \bolda, \boldc+2\boldx, \boldy) + \delta(\boldzero, \boldc, \boldb+2\boldx, \boldy) + \delta(\boldzero, \boldb, \bolda+2\boldx, \boldy).
\end{split}
\end{equation}
In the case $|G|$ is odd, this simplifies to
\begin{equation} \label{eq manin3 odd}
g_\bolda \wedge g_\boldb + g_\boldb \wedge g_\boldc + g_\boldc \wedge g_\bolda = \frac{1}{|G|} \sum_{\boldx \in G} \delta(\boldzero, \boldx, \bolda-\boldx, \boldb+\boldx).
\end{equation}
\end{theorem}

In analogy with modular symbols, where the chain $[h] + [h t] + [h t^2]$ is the boundary of an ideal hyperbolic triangle \cite{Man72}, we refer to \eqref{eq manin3} and \eqref{eq manin3 odd} as \emph{triangulations} of the 3-term relation in $K_2$.

\begin{remark}
We can work with the modular curve $Y_1(N)$ by choosing $G = \{0\} \times \Z/N\Z$ and using indices of the form $\boldx = (0,x)$. Indeed, the Siegel functions $g_{0,x}$ define elements of $\mathcal{O}(Y_1(N))^\times \otimes \Z\bigl[\frac{1}{6N}\bigr]$, and $u_1(a,b,c,d)$ is a modular unit on $Y_1(N)$ (Proposition \ref{pro u1abcd}).
\end{remark}

\begin{proof}[Proof of Theorem \ref{triangulation manin3}]
We will use the following two relations in $\mathcal{O}(Y(N))^\times \otimes \Z\bigl[\frac{1}{6N}\bigr]$:
\begin{equation} \label{eq uabcd gx 2}
u(\bolda, \boldb, \boldc, \boldd) = \frac{g_{\boldc+\bolda} g_{\boldc-\bolda} g_{\boldd+\boldb} g_{\boldd-\boldb}}{g_{\boldc+\boldb} g_{\boldc-\boldb} g_{\boldd+\bolda} g_{\boldd-\bolda}} \qquad \textrm{and} \qquad g_{-\boldx} = g_\boldx.
\end{equation}
The first one is a consequence of Proposition \ref{pro uabcd gx}, and the second one follows from \eqref{eq g-x}.

Let now $\bolda, \boldb, \boldc, \boldd$ be elements of $(\Z/N\Z)^2$ whose images in $(\Z/N\Z)^2/\pm 1$ are distinct. Expanding $\delta(\bolda, \boldb, \boldc, \boldd)$ using \eqref{eq uabcd gx 2}, we have
\begin{equation} \label{eq delta abcd}
\begin{split}
\delta(\bolda, \boldb, \boldc, \boldd) & = (g_{\boldc+\bolda} g_{\boldc-\bolda} \cdot g_{\boldd+\boldb} g_{\boldd-\boldb}) \wedge (g_{\boldb+\bolda} g_{\boldb-\bolda} \cdot g_{\boldd+\boldc} g_{\boldd-\boldc}) \\
& \;\quad + (g_{\boldb+\bolda} g_{\boldb-\bolda} \cdot g_{\boldd+\boldc} g_{\boldd-\boldc}) \wedge (g_{\boldc+\boldb} g_{\boldc-\boldb} \cdot g_{\boldd+\bolda} g_{\boldd-\bolda}) \\
& \;\quad + (g_{\boldc+\boldb} g_{\boldc-\boldb} \cdot g_{\boldd+\bolda} g_{\boldd-\bolda}) \wedge (g_{\boldc+\bolda} g_{\boldc-\bolda} \cdot g_{\boldd+\boldb} g_{\boldd-\boldb}).
\end{split}
\end{equation}
This expression is antisymmetric in $\bolda,\boldb,\boldc,\boldd$ and is zero when $\boldb=\pm \bolda$. Therefore \eqref{eq delta abcd} holds for any $\bolda, \boldb, \boldc, \boldd \in (\Z/N\Z)^2$. By expanding the right-hand side of \eqref{eq delta abcd} with respect to the dots, we get
\begin{equation} \label{delta phi}
\delta(\bolda, \boldb, \boldc, \boldd) = \varphi(\bolda, \boldb, \boldc) + \varphi(\boldc, \boldd, \bolda) + \varphi(\boldb, \bolda, \boldd) + \varphi(\boldd, \boldc, \boldb),
\end{equation}
where we have set
\begin{equation*}
\varphi(\boldx, \boldy, \boldz) = g_{\boldz+\boldx} g_{\boldz-\boldx} \wedge g_{\boldy+\boldx} g_{\boldy-\boldx} + g_{\boldy+\boldx} g_{\boldy-\boldx} \wedge g_{\boldz+\boldy} g_{\boldz-\boldy} + g_{\boldz+\boldy} g_{\boldz-\boldy} \wedge g_{\boldz+\boldx} g_{\boldz-\boldx}.
\end{equation*}
The functions $\delta$ and $\varphi$ are antisymmetric with respect to their arguments.

\begin{lemma} \label{lem sum phi}
For any $\boldy, \boldz \in G$, we have $\sum_{\boldx \in G} \varphi(\boldx, \boldy, \boldz) = 0$.
\end{lemma}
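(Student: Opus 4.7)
The plan is to exploit the translation- and sign-symmetries of $G$ to reduce $\sum_x \varphi(x,y,z)$ to a form where antisymmetry of the wedge kills it. Introduce the abbreviations
\[
A_x = g_{z+x}g_{z-x}, \qquad B_x = g_{y+x}g_{y-x}, \qquad C = g_{z+y}g_{z-y},
\]
so that $\varphi(x,y,z) = A_x \wedge B_x + B_x \wedge C + C \wedge A_x = A_x \wedge B_x + (B_x - A_x) \wedge C$. It then suffices to prove separately that
\[
\sum_{x \in G} (B_x - A_x) \wedge C = 0 \qquad \text{and} \qquad \sum_{x \in G} A_x \wedge B_x = 0.
\]

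For the first vanishing, I would use that $G$ is a subgroup (so $-G = G$) and $y,z \in G$, which makes the maps $x \mapsto y \pm x$ and $x \mapsto z \pm x$ bijections of $G$. Translating the wedge into additive notation, this gives
\[
\sum_{x \in G} B_x = 2 \sum_{t \in G} g_t = \sum_{x \in G} A_x,
\]
and bilinearity of $\wedge$ finishes the first part.

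For the second vanishing, I would expand
\[
A_x \wedge B_x = g_{z+x}\wedge g_{y+x} + g_{z+x}\wedge g_{y-x} + g_{z-x}\wedge g_{y+x} + g_{z-x}\wedge g_{y-x}
\]
and apply the involution $x \mapsto -x$ on $G$. Since $g_{-v}=g_v$, this pairs the first and fourth summands, and the second and third, giving
\[
\sum_{x \in G} A_x \wedge B_x = 2\sum_{x \in G} \bigl(g_{z+x}\wedge g_{y+x} + g_{z+x}\wedge g_{y-x}\bigr).
\]
Translating $x$ by $-y$ or $y$ (both in $G$), each piece reduces to a sum of the shape $f(w) := \sum_{x\in G} g_{w+x}\wedge g_x$ with $w \in G$ (namely $w=z-y$ and $w=z+y$). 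The final trick is the substitution $x \mapsto -x-w$, which is a bijection of $G$ because $w\in G$; combined with $g_{-v}=g_v$ and the antisymmetry of $\wedge$, it yields $f(w) = -f(w)$, hence $2 f(w) = 0$, and thus $f(w)=0$ after inverting $2$.

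The whole argument is essentially bookkeeping; the main things to keep straight are (i) that the indexing substitutions preserve $G$, which forces us to assume $y,z \in G$ and to use $-G = G$, and (ii) that the factors of $2$ produced by pairing terms and by the antisymmetry trick are invertible in $\Z[\tfrac{1}{6N}]$, which is precisely why the identity is stated after inverting $6N$.
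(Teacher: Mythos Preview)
Your proof is correct and follows essentially the same route as the paper's. The paper compresses your first step (the vanishing of $\sum_x (B_x - A_x)\wedge C$) into the phrase ``a simple computation shows that the sum simplifies to $\sum_x g_{z+x}g_{z-x}\wedge g_{y+x}g_{y-x}$'', and then handles the remaining sum exactly as you do: pair terms via $x\mapsto -x$ to pull out a factor of $2$, and reduce to sums $S(a,b)=\sum_{x\in G} g_{x+a}\wedge g_{x+b}$ (your $f(w)$ with $w=a-b$), which are shown to be both symmetric and antisymmetric via the substitution $x\mapsto x+a+b$ together with $g_{-v}=g_v$.
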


\begin{proof}
A simple computation shows that the sum simplifies to
\begin{equation*}
\sum_{\boldx \in G} \varphi(\boldx, \boldy, \boldz) = \sum_{\boldx \in G} g_{\boldz+\boldx} g_{\boldz-\boldx} \wedge g_{\boldy+\boldx} g_{\boldy-\boldx} = 2 \sum_{\boldx \in G} g_{\boldz+\boldx} \wedge g_{\boldy+\boldx} + g_{\boldz+\boldx} \wedge g_{\boldy-\boldx}.
\end{equation*}
Let us first consider $S = \sum_{\boldx \in G} g_{\boldz+\boldx} \wedge g_{\boldy+\boldx}$. Changing variables $\boldx = - \boldy - \boldz - \boldx'$, we get
\begin{equation*}
S = \sum_{\boldx' \in G} g_{-\boldy-\boldx'} \wedge g_{-\boldz-\boldx'} = \sum_{\boldx' \in G} g_{\boldy+\boldx'} \wedge g_{\boldz+\boldx'} = - S,
\end{equation*}
so that $S=0$. A similar argument using the change of variables $\boldx=\boldy-\boldz-\boldx'$ shows that the second part of the sum vanishes.
\end{proof}

\begin{lemma} \label{lem phi}
For any $\bolda, \boldb, \boldc \in G$, we have
\begin{equation*}
\varphi(\bolda, \boldb, \boldc) = \frac{1}{|G|} \sum_{\boldd \in G} \delta(\bolda, \boldb, \boldc, \boldd).
\end{equation*}
\end{lemma}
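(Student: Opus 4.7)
The plan is to sum the identity \eqref{delta phi} over $d \in G$ and use Lemma \ref{lem sum phi} together with the antisymmetry of $\varphi$ to kill three of the four terms, leaving only the first term $\varphi(a,b,c)$ summed $|G|$ times.

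Concretely, I would start from
\begin{equation*}
\sum_{d \in G} \delta(a,b,c,d) = |G| \cdot \varphi(a,b,c) + \sum_{d \in G} \varphi(c,d,a) + \sum_{d \in G} \varphi(b,a,d) + \sum_{d \in G} \varphi(d,c,b),
\end{equation*}
noting that the first summand is independent of $d$. For each of the three remaining sums, I would use the antisymmetry $\varphi(\sigma(x,y,z)) = \varepsilon(\sigma) \varphi(x,y,z)$ (established in the text right after \eqref{delta phi}) to move the running variable $d$ into the first slot, after which Lemma \ref{lem sum phi} applies and gives $0$. For instance, $\varphi(c,d,a) = -\varphi(d,c,a)$, so $\sum_{d \in G} \varphi(c,d,a) = -\sum_{d \in G} \varphi(d,c,a) = 0$; similarly $\sum_{d \in G} \varphi(b,a,d) = -\sum_{d \in G} \varphi(d,a,b) = 0$ using the $(1\,3)$ transposition, and $\sum_{d \in G} \varphi(d,c,b) = 0$ directly by Lemma \ref{lem sum phi}.

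Dividing by $|G|$ yields the claim. No step looks like a real obstacle here; the only thing to be careful about is that we are allowed to divide by $|G|$, but this is fine since we are working in $\Lambda^2 \mathcal{O}(Y(N))^\times \otimes \Z[\tfrac{1}{6N}]$, and $|G|$ divides $N^2$, which is invertible in this ring. So the proof is really just bookkeeping once \eqref{delta phi}, the antisymmetry of $\varphi$, and Lemma \ref{lem sum phi} are in hand.
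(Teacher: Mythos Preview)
Your proof is correct and follows exactly the approach indicated in the paper: sum \eqref{delta phi} over $d \in G$, use the antisymmetry of $\varphi$ to bring the running variable into the first slot, and invoke Lemma \ref{lem sum phi} to kill the three $d$-dependent terms. The paper's own proof is the one-line ``It follows from summing \eqref{delta phi} over $d \in G$ and using Lemma \ref{lem sum phi}'', so you have simply unpacked that sentence (and your remark on the invertibility of $|G|$ in $\Z[\tfrac{1}{6N}]$ is a nice sanity check).
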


\begin{proof}
It follows from summing \eqref{delta phi} over $\boldd \in G$ and using Lemma \ref{lem sum phi}.
\end{proof}

Let $\psi(\bolda, \boldb) = g_\bolda \wedge g_\boldb  + g_\boldb \wedge g_\boldc + g_\boldc \wedge g_\bolda$, where $\boldc$ is defined by $\bolda + \boldb + \boldc = \boldzero$. Our next task is to show that $\psi(\bolda, \boldb)$ is a linear combination of values of $\varphi$. The definition of $\varphi$ gives
\begin{equation} \label{eq phi}
\varphi(\boldx, \boldy, \boldz) = \psi(\boldz+\boldx,-\boldy-\boldx) + \psi(\boldz+\boldx, \boldy-\boldx) + \psi(\boldz-\boldx, -\boldy+\boldx) + \psi(\boldz-\boldx, \boldy+\boldx).
\end{equation}
Changing variables and putting $\bolda=\boldz+\boldx$ and $\boldb=-\boldy-\boldx$, this becomes
\begin{align*}
\varphi(\boldx, -\boldb - \boldx, \bolda - \boldx) & = \psi(\bolda,\boldb) + \psi(\bolda, -\boldb-2\boldx) + \psi(\bolda-2\boldx, \boldb+2\boldx) + \psi(\bolda-2\boldx, -\boldb) \\
& = \psi(\bolda,\boldb) + \psi(-\bolda+\boldb+2\boldx, \bolda) + \psi(\boldb+2\boldx, \boldc) + \psi(\bolda-2\boldx, -\boldb).
\end{align*}
Here we used $\psi(\boldu, \boldv) = \psi(\boldv, - \boldu - \boldv) = \psi(- \boldu - \boldv, \boldu)$. Summing over $\boldx \in G$, we get
\begin{equation*}
\sum_{\boldx \in G} \varphi(\boldx, - \boldb - \boldx, \bolda - \boldx) = |G| \cdot \psi(\bolda, \boldb) + R_{- \bolda + \boldb}(\bolda) + R_\boldb(\boldc) + R_\bolda(-\boldb),
\end{equation*}
where $R_\boldu(\boldv) = \sum_{\boldx \in G} \psi(\boldu + 2\boldx, \boldv)$. One checks the relations $R_{\boldu + 2\boldw}(\boldv) = R_\boldu(\boldv)$ for any $\boldw \in G$, and $R_\boldu(-\boldv) = R_{-\boldu}(\boldv) = R_\boldu(\boldv)$. Therefore
\begin{equation} \label{sum phi psi R}
\sum_{\boldx \in G} \varphi(\boldx, -\boldb-\boldx, \bolda-\boldx) = |G| \cdot \psi(\bolda, \boldb) + R_\boldc(\bolda) + R_\boldb(\boldc) + R_\bolda(\boldb).
\end{equation}

\begin{lemma} \label{lem Ruv}
For any $\boldu, \boldv \in G$, we have $R_\boldu(\boldv) = \frac14 \sum_{\boldx \in G} \varphi(\boldzero, \boldv, \boldu + 2\boldx)$.
\end{lemma}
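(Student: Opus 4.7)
The plan is to apply \eqref{eq phi} directly with $(x,y,z) = (0, v, u+2x)$, where the second $x$ is the summation variable in the statement. With first argument equal to $0$, all four $\psi$-terms on the right of \eqref{eq phi} have first entry $u+2x$, and the second entries pair up to yield
\begin{equation*}
\varphi(0, v, u+2x) = 2\,\psi(u+2x, v) + 2\,\psi(u+2x, -v).
\end{equation*}
Summing over $x \in G$ gives
\begin{equation*}
\sum_{x \in G} \varphi(0, v, u+2x) = 2\,R_u(v) + 2 \sum_{x \in G} \psi(u+2x, -v),
\end{equation*}
so the lemma reduces to showing that the second sum also equals $R_u(v)$.

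For this, I would first observe the identity $\psi(a, -b) = \psi(-a, b)$, which is immediate from the definition of $\psi$ together with $g_{-z} = g_z$: both sides unfold to $g_a \wedge g_b + g_b \wedge g_{a-b} + g_{a-b} \wedge g_a$. Applying this to each summand and then reindexing $x \mapsto -x$ in the group $G$ converts $\sum_{x \in G}\psi(u+2x, -v)$ into $\sum_{x \in G} \psi(-u+2x, v) = R_{-u}(v)$. It remains to check $R_u(v) = R_{-u}(v)$, which follows because the multisets $\{u+2x : x \in G\}$ and $\{-u+2x : x \in G\}$ coincide: both represent the coset $u+2G$ of $2G$ in $G$ (note $-u = u + (-2u) \in u + 2G$), and each element of the coset is attained exactly $|G[2]|$ times as $x$ ranges over $G$. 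Combining the three steps gives $\sum_{x \in G} \varphi(0, v, u+2x) = 4\,R_u(v)$, which is the desired formula.

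The only genuine obstacle is the sign bookkeeping needed to establish $\psi(a,-b) = \psi(-a,b)$; everything else is a routine specialization of \eqref{eq phi} followed by a reindexing argument in the finite abelian group $G$.
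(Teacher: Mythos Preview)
Your proof is correct and follows essentially the same route as the paper. Both arguments specialise \eqref{eq phi} at first argument $0$ to obtain $\varphi(0,v,u+2x)=2\psi(u+2x,v)+2\psi(u+2x,-v)$, invoke the identity $\psi(a,-b)=\psi(-a,b)$, and finish by a reindexing in $G$; the only cosmetic difference is that the paper performs the change of variables $x\mapsto -x-u$ in a single step, whereas you split it into the two substitutions $x\mapsto -x$ and $x\mapsto x-u$ (the latter hidden in your coset argument that $R_u(v)=R_{-u}(v)$).
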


\begin{proof}
Taking $\boldx = \boldzero$ in \eqref{eq phi}, we obtain
\begin{equation*}
\varphi(\boldzero, \boldy, \boldz) = 2(\psi(\boldz, \boldy)+\psi(\boldz, -\boldy)) = 2(\psi(\boldz, \boldy)+\psi(-\boldz, \boldy)).
\end{equation*}
Specialising to $\boldy = \boldv$, $\boldz = \boldu + 2\boldx$, and summing over $\boldx \in G$ gives
\begin{equation*}
\sum_{\boldx \in G} \varphi(\boldzero, \boldv, \boldu + 2\boldx) = 2 \sum_{\boldx \in G} \psi(\boldu + 2\boldx, \boldv)+\psi(- \boldu - 2\boldx, \boldv) = 4 R_\boldu(\boldv). \qedhere
\end{equation*}
\end{proof}

Using Lemmas \ref{lem phi} and \ref{lem Ruv}, the equation \eqref{sum phi psi R} becomes
\begin{equation} \label{eq psi intermediate}
\begin{split}
\psi(\bolda, \boldb) & = \frac{1}{|G|^2} \sum_{\boldx, \boldy \in G} \delta(\boldx, - \boldb - \boldx, \bolda - \boldx, \boldy) \\
& \quad - \frac1{4|G|^2} \sum_{\boldx, \boldy \in G} \delta(\boldzero, \bolda, \boldc + 2\boldx, \boldy) + \delta(\boldzero, \boldc, \boldb + 2\boldx, \boldy) + \delta(\boldzero, \boldb, \bolda + 2\boldx, \boldy).
\end{split}
\end{equation}
We wish to simplify the first sum. For this, we will use the fact that $\delta$ satisfies $5$-term relations. More precisely, we have:

\begin{lemma} \label{lem delta 5-term}
Let $(\bolda_j)_{j \in \Z/5\Z}$ be a family of elements of $(\Z/N\Z)^2/\pm 1$. Then
\begin{equation} \label{eq delta 5-term}
\sum_{j \in \Z/5\Z} \delta(\bolda_j, \bolda_{j+1}, \bolda_{j+2}, \bolda_{j+3}) = 0 \quad \textrm{in } \Lambda^2 \mathcal{O}(Y(N))^\times \otimes \Z\Bigl[\frac{1}{6N}\Bigr].
\end{equation}
\end{lemma}

\begin{proof}
Assume first that the $\bolda_j$ are pairwise distinct. Let $F = \Q(Y(N))$ be the function field of $Y(N)$, and $F'$ be the field generated by the modular forms $\wp_\bolda$ with $\bolda \in (\Z/N\Z)^2 \setminus \{\boldzero\}$. Consider the formal sum
\begin{equation*}
D = \sum_{j \in \Z/5\Z} \{ u(\bolda_j, \bolda_{j+1}, \bolda_{j+2}, \bolda_{j+3}) \} \in \Z[F \setminus \{0,1\}],
\end{equation*}
where $\{x\}$ denotes the basis element of the free abelian group $\Z[F \setminus \{0,1\}]$. Let $h$ be the unique homography of $\PP^1(F')$ sending $\wp_{\bolda_0}, \wp_{\bolda_1}, \wp_{\bolda_2}$ to $\infty, 0, 1$, respectively. Let $x_j = h(\wp_{\bolda_j}) \in \PP^1(F)$. By projective invariance of the cross-ratio, we have
\begin{equation*}
D = \sum_{j \in \Z/5\Z} \{ [x_j, x_{j+1}, x_{j+2}, x_{j+3}] \}.
\end{equation*}
This is essentially a $5$-term relation in the sense of \cite[Section 1.2]{Gon95}. Taking into account Goncharov's different normalisation of the cross-ratio, we have in his notations
\begin{equation*}
D = - R_2(x_0, \ldots, x_4) + D'
\end{equation*}
where $D'$ is a linear combination of elements of the form $\{u\} + \{\frac{1}{u}\}$ and $\{u\} - \{1-\frac{1}{u}\}$ with $u \in F \setminus \{0,1\}$. We now apply the linear map
\begin{equation*}
\delta_2 : \Z[F \setminus \{0,1\}] \to \Lambda^2 F^\times, \quad \{x\} \mapsto (1-x) \wedge x.
\end{equation*}
(This is none other than the differential in the Bloch-Suslin complex of $F$, see Section \ref{subsec: goncharov complexes}.) We have $\delta_2(R_2(x_0, \ldots, x_4)) = 0$ by \cite[Section 1.8, p.~218]{Gon95}, and one checks directly that $\delta_2(D')$ is $2$-torsion. It follows that the left-hand side of \eqref{eq delta 5-term} is zero in $\Lambda^2 F^\times \otimes \Z\bigl[\frac{1}{6N}\bigr]$. But the divisor map identifies $F^\times / \mathcal{O}(Y(N))^\times$ with a subgroup of a free abelian group, hence $F^\times / \mathcal{O}(Y(N))^\times$ is free abelian. It follows that $\mathcal{O}(Y(N))^\times$ is a direct factor of $F^\times$. So the natural map $\Lambda^2 \mathcal{O}(Y(N))^\times \to \Lambda^2 F^\times$ is injective, showing \eqref{eq delta 5-term}.

In the case the $\bolda_j$ are not distinct, we may assume, by cyclic invariance, that $\bolda_0 = \bolda_1$ or $\bolda_0 = \bolda_2$. In the first case, the left-hand side of \eqref{eq delta 5-term} reduces to $\delta(\bolda_0, \bolda_2, \bolda_3, \bolda_4) + \delta(\bolda_2, \bolda_3, \bolda_4, \bolda_0)$. Putting $u = u(\bolda_0, \bolda_2, \bolda_3, \bolda_4)$, this equals
\begin{equation*}
u \wedge (1-u) + \frac{u}{u-1} \wedge \frac1{1-u},
\end{equation*}
which is $2$-torsion. The case $\bolda_0 = \bolda_2$ is similar.
\end{proof}

Lemma \ref{lem delta 5-term} gives in particular:
\begin{equation} \label{eq 5term}
\begin{split}
\delta(\boldx, - \boldb - \boldx, \bolda - \boldx, \boldy) & + \delta(- \boldb - \boldx, \bolda - \boldx, \boldy, \boldzero) + \delta(\bolda - \boldx, \boldy, \boldzero, \boldx) \\
& + \delta(\boldy, \boldzero, \boldx, - \boldb - \boldx) + \delta(\boldzero, \boldx, - \boldb - \boldx, \bolda - \boldx) = 0.
\end{split}
\end{equation}

\begin{lemma} \label{lem sum x delta}
For any $\boldsymbol\alpha, \boldsymbol\beta, \boldz, \boldt \in G$, we have $\sum_{\boldx \in G} \delta(\boldsymbol\alpha + \boldx, \boldsymbol\beta + \boldx, \boldz, \boldt) = 0$.
\end{lemma}

\begin{proof}
Denote this sum by $S$. The change of variables $\boldx = - \boldsymbol\alpha - \boldsymbol\beta - \boldx'$ gives
\begin{equation*}
S = \sum_{\boldx' \in G} \delta(- \boldsymbol\beta - \boldx', - \boldsymbol\alpha - \boldx', \boldz, \boldt) = \sum_{\boldx' \in G} \delta(\boldsymbol\beta + \boldx', \boldsymbol\alpha + \boldx', \boldz, \boldt) = -S. \qedhere
\end{equation*}
\end{proof}

Note that $\delta(\pm \bolda, \pm \boldb, \pm \boldc, \pm \boldd) = \delta(\bolda, \boldb, \boldc, \boldd)$. From \eqref{eq 5term} and Lemma \ref{lem sum x delta}, we obtain
\begin{equation*}
\sum_{\boldx \in G} \delta(\boldx, - \boldb - \boldx, \bolda - \boldx, \boldy) = - \sum_{\boldx \in G} \delta(\boldzero, \boldx, \boldb + \boldx, \bolda - \boldx) =  \sum_{\boldx \in G} \delta(\boldzero, \boldx, \bolda - \boldx, \boldb + \boldx).
\end{equation*}
Together with \eqref{eq psi intermediate}, this proves \eqref{eq manin3}. Finally, let us suppose that $|G|$ is odd. For any $\boldsymbol\alpha \in G$, the map $\boldx \mapsto \boldsymbol\alpha + 2\boldx$ is a bijection of $G$. Therefore, for any $\boldz, \boldt \in G$, we have
\begin{equation*}
\sum_{\boldx, \boldy \in G} \delta(\boldz, \boldt, \boldsymbol\alpha + 2\boldx, \boldy) = \sum_{\boldx, \boldy \in G} \delta(\boldz, \boldt, \boldx, \boldy) = 0
\end{equation*}
by antisymmetry with respect to $\boldx, \boldy$. Therefore the second line of \eqref{eq manin3} vanishes. This finishes the proof of Theorem \ref{triangulation manin3}.
\end{proof}

\begin{remark}
An alternative approach to Theorem \ref{triangulation manin3} would be to use Goncharov's strong Suslin reciprocity law \cite{Gon05, Rud21, Bol23}.
For $N \geqslant 3$, let $E(N)$ be the universal elliptic curve over the function field $k$ of $Y(N)$. Goncharov shows in \cite[Theorem 2.10]{Gon08} that the symbol $g_\bolda \wedge g_\boldb + g_\boldb \wedge g_\boldc + g_\boldc \wedge g_\bolda$ is the differential of a certain element $\theta_E(\bolda, \boldb, \boldc)$ in the Bloch-Suslin complex of $k$ (see Section \ref{subsec: goncharov complexes} for the definition of this complex). This gives another triangulation of the 3-term relation. The element $\theta_E(\bolda, \boldb, \boldc)$ is constructed using Goncharov's reciprocity law \cite[Theorem 6.14]{Gon05}. We note that Theorem \ref{triangulation manin3} gives a triangulation with modular units and not just with modular functions. It would be interesting to compare $\theta_E(\bolda, \boldb, \boldc)$ with the triangulations \eqref{eq manin3} and \eqref{eq manin3 odd}, using Goncharov's explicit formula in \cite[Theorem 6.14]{Gon05}.
\end{remark}

\begin{example} \label{ex2}
Continuing Example \ref{ex1}, consider the modular curve $X_1(15)$ and choose $\bolda = (0,1)$, $\boldb = (0,4)$. To simplify notation, define
\begin{equation*}
\delta_1(x,y,z,t) = \delta((0,x),(0,y),(0,z),(0,t))
\end{equation*}
for any $x,y,z,t$ in $(\Z/15\Z)/\pm 1$. Then Theorem \ref{triangulation manin3} gives $g_{0,1} \wedge g_{0,4} + g_{0,4} \wedge g_{0,-5} + g_{0,-5} \wedge g_{0,1}$ as a sum of $9$ terms of the form $\delta_1(x,y,z,t)$ (there are a priori 15 terms, but $6$ of them do not appear because the indices are not distinct in $(\Z/15\Z)/\pm 1$). Using a computer, we find the simpler triangulation
\begin{equation*}
g_{0,1} \wedge g_{0,4} + g_{0,4} \wedge g_{0,-5} + g_{0,-5} \wedge g_{0,1} = \frac15 \delta_1(0,1,2,4) + \frac15 \delta_1(0,1,3,6) - \frac25 \delta_1(1,2,3,5) - \frac15 \delta_1(1,2,4,6).
\end{equation*}
It can be checked directly using \eqref{eq delta abcd}.
\end{example}

\section{The polylogarithmic complex} \label{sec: goncharov de jeu}

In this section, we present the results of Goncharov and De Jeu that we need in order to construct elements in $K_4$ of modular curves.

For a scheme $X$, we denote by $H^i_\M(X, \Q(n))$ the weight $n$ motivic cohomology of $X$ with $\Q$-coefficients. If $X$ is smooth over a field, this group is isomorphic to the $n$th Adams eigenspace $K_{2n-i}^{(n)}(X)$ of Quillen's rational $K$-group $K_{2n-i}(X) \otimes \Q$.

\subsection{Goncharov's complexes} \label{subsec: goncharov complexes}

We begin by recalling Goncharov's theory of polylogarithmic complexes \cite{Gon95, Dup21}. For any abelian group $A$, we set $A_\Q = A \otimes_\Z \Q$. Let $F$ be a field, and let $n \geqslant 1$ be an integer. Goncharov constructs a \emph{weight $n$ polylogarithmic motivic complex} $\Gamma(F,n)$ of the following shape:
\begin{equation*}
B_n(F) \to B_{n-1}(F) \otimes F^\times_{\Q} \to B_{n-2}(F) \otimes \Lambda^2 F^\times_{\Q} \to \cdots \to B_2(F) \otimes \Lambda^{n-2} F^\times_{\Q} \to \Lambda^n F^\times_{\Q},
\end{equation*}
where $B_n(F)$ is defined as the quotient of the $\Q$-vector space $\Q[\PP^1(F)]$ with basis $\PP^1(F)$, by a certain subspace $R_n(F)$ reflecting the functional equations of the $n$th polylogarithm function. For $x \in \PP^1(F)$, we denote by $\{x\}_n$ the image of the basis element $\{x\}$ in $B_n(F)$. The differentials in the Goncharov complex are induced by maps $\delta_2 : B_2(F) \to \Lambda^2 F^\times_\Q$ and $\delta_n : B_n(F) \to B_{n-1}(F) \otimes F^\times_{\Q}$ for $n \geqslant 3$, which are uniquely determined by
\begin{equation*}
\delta_2(\{x\}_2) = \begin{cases} (1-x) \wedge x & \textrm{if } x \neq 0,1,\infty, \\
0 & \textrm{otherwise},
\end{cases}
\end{equation*}
and, for $n \geqslant 3$,
\begin{equation*}
\delta_n(\{x\}_n) = \begin{cases} \{x\}_{n-1} \otimes x & \textrm{if } x \neq 0,1,\infty, \\
0 & \textrm{otherwise}.
\end{cases}
\end{equation*}

The complex $\Gamma(F,n)$ sits in cohomological degrees $1$ to $n$ and is expected to compute the weight $n$ motivic cohomology of $\Spec F$. More precisely, combining \cite[Conjecture A and Conjecture 1.17, p. 222--223]{Gon95}, we have:

\begin{conjecture}[Goncharov] \label{Goncharov polylog conj}
For every $1 \leqslant i \leqslant n$, the group $H^i(\Gamma(F, n))$ is canonically isomorphic to $H^i_\M(F,\Q(n))$.
\end{conjecture}

For $i=n$, the group $H^n(\Gamma(F,n))$ is isomorphic to the Milnor $K$-group $K_n^M(F)_\Q$, and in this case the conjecture is known by \cite{NS89, Tot92}. For $n=2$, $i=1$, the conjecture is known by work of Suslin \cite{Sus90} (see the discussion below for more details). For further results, we refer the reader to the articles \cite{Gon95, Jeu95, Jeu96, Jeu00, GR25, Dup21}. Recently, Bolbachan \cite{Bol24} has announced a proof of the conjecture in the case $i=n-1$ for any $n \geqslant 2$ for fields of characteristic $0$.

In this article we will only need the polylogarithmic complexes of weight $2$ and $3$. We will take the version of these complexes where $R_n(F)$ is defined explicitly as in \cite[Section 1.8]{Gon95}, rather than inductively as in \cite[Section 1.9]{Gon95}. In particular, $B_2(F)$ is defined as follows.

\begin{definition}
The group $B_2(F)$ is the quotient of $\Q[F \setminus \{0,1\}]$ by the subspace $R_2(F)$ generated by the elements
\begin{equation*}
\{x\} - \{y\} + \{y/x\} - \Bigl\{\frac{1-x^{-1}}{1-y^{-1}}\Bigr\} + \Bigl\{\frac{1-x}{1-y}\Bigr\} \qquad (x,y \in F \setminus \{0,1\}, \; x \neq y).
\end{equation*}
\end{definition}

This definition agrees with Goncharov's definition in \cite[Section 1.8, p.~218]{Gon95} since every $5$-tuple $(x_0, \ldots, x_4)$ of distinct points of $\PP^1(F)$ is projectively equivalent to $(0, \infty, 1, x, y)$ for some $x,y$ in $F$.
The group $B_3(F)$ has a similar explicit definition as a quotient of $\Q[F \setminus \{0,1\}]$ \cite[Section 5.2]{Gon95}, but we will not need it in this article.

The complex $\Gamma(F,2) \colon B_2(F) \to \Lambda^2 F^\times_\Q$ (which is $B_F(2) \otimes \Q$ in the notation of \cite[Section~1.8, p.~218]{Gon95}) is the Bloch-Suslin complex of $F$ tensored with $\Q$. By Matsumoto's theorem \cite[Theorem III.6.1]{Wei13}, we have $H^2(\Gamma(F,2)) \cong K_2(F)_\Q$. The group $H^1(\Gamma(F,2))$ is called the Bloch group of $F$ (tensored with $\Q$). If $F$ is infinite, Suslin's theorem \cite[Theorem 5.2]{Sus90} implies that the Bloch group of $F$ is isomorphic to the quotient of $K_3(F)_\Q$ by the image of Milnor's $K$-group $K_3^M(F)_\Q$. Moreover, we have a decomposition $K_3(F)_\Q = K_3^{(2)}(F) \oplus K_3^{(3)}(F)$ by \cite[Section 2.8, Corollaire 1]{Sou85}, and $K_3^{(3)}(F)$ is the image of $K_3^M(F)_\Q$ in $K_3(F)_\Q$ \cite[Section 3.1, Théorème 2]{Sou85}. This implies that $H^1(\Gamma(F,2))$ is isomorphic to $K_3^{(2)}(F)$ if $F$ is infinite (see also \cite[Theorem VI.5.2]{Wei13}).

In weight $3$, the complex $\Gamma(F,3)$ (which is $B_F(3) \otimes \Q$ in the notation of \cite[Section~1.8, p.~219]{Gon95}) is placed in degrees 1 to 3 and looks as follows:
\begin{equation*}
\begin{tikzcd}[row sep = tiny]
\Gamma(F,3) : & B_3(F) \arrow{r} & B_2(F) \otimes F^\times_\Q \arrow{r} & \Lambda^3 F^\times_\Q \\
& \{x\}_3 \arrow[r, mapsto] & \{x\}_2 \otimes x & \\
& & \{x\}_2 \otimes y \arrow[r, mapsto] & (1-x) \wedge x \wedge y.
\end{tikzcd}
\end{equation*}
Goncharov's conjecture states that $H^2(\Gamma(F,3))$ is isomorphic to $K_4^{(3)}(F)$. In support of this, Goncharov constructed a canonical map $K_4(F)_\Q \to H^2(\Gamma(F,3))$ \cite[Section 6]{Gon95}.
This map should induce an isomorphism $K_4^{(3)}(F) \cong H^2(\Gamma(F,3))$ \cite[Conjectures 1.15 and 1.17]{Gon95}.

De Jeu \cite{Jeu95, Jeu96, Jeu00} constructed a map in the other direction by introducing other polylogarithmic complexes involving multi-relative $K$-theory. He proved the following theorem.

\begin{theorem}[De Jeu] \label{thm De Jeu map K4}
For any field $F$ of characteristic zero, there is a map
\begin{equation} \label{eq De Jeu map K4}
H^2(\Gamma(F,3)) \to K_4^{(3)}(F),
\end{equation}
which is canonical up to sign. Moreover, it is possible to choose the sign consistently for all fields so that the map \eqref{eq De Jeu map K4} becomes functorial in $F$.
\end{theorem}

This theorem follows from combining \cite[Section 2]{Jeu96}, where a map $H^2(\widetilde{\mathcal{M}}_{(3)}^{\bullet}(F)) \to K_4^{(3)}(F)$ is constructed (here $\widetilde{\mathcal{M}}_{(3)}^{\bullet}(F)$ is De Jeu's weight 3 polylogarithm complex), and \cite[Lemma 5.2]{Jeu00}, which can be used to define a map $H^2(\Gamma(F,3)) \to H^2(\widetilde{\mathcal{M}}_{(3)}^{\bullet}(F))$.

We will use Theorem \ref{thm De Jeu map K4} in the case $F$ is the function field of a modular curve. We note that Bolbachan \cite{Bol24} also constructed a map \eqref{eq De Jeu map K4} using higher Chow groups. Since we need to control the residues of the elements in $K_4^{(3)}(F)$, we will use De Jeu's map. We explain this map in more detail for curves in the next section.

\subsection{De Jeu's map for curves} \label{subsec: de jeu curves}

Let $X$ be a smooth projective geometrically connected curve defined over a number field $k$, and let $F=k(X)$ be its function field. In view of Theorem \ref{thm De Jeu map K4}, we search for degree 2 cocycles in the Goncharov complex $\Gamma(F,3)$. One obstacle is that the vector spaces appearing in the Goncharov complexes are infinite-dimensional. We now describe a way to circumvent this problem which works at least in certain cases.

Fix a finite set $S$ of closed points of $X$, and let $Y = X \backslash S$. By Soulé's localisation sequence for the $K$-theory with weights \cite[Remarques, p.~525]{Sou85}, we have an exact sequence
\begin{equation*}
\bigoplus_{x \in Y} K_4^{(2)}(k(x)) \to K_4^{(3)}(Y) \to K_4^{(3)}(F) \to \bigoplus_{x \in Y} K_3^{(2)}(k(x)),
\end{equation*}
where the map $K_4^{(3)}(F) \to K_3^{(2)}(k(x))$ is called the residue map at $x$. By Borel's theorem \cite[Theorem IV.1.18]{Wei13}, the groups $K_4^{(2)}(k(x))$ are zero. This gives an exact sequence
\begin{equation} \label{eq loc K4 2}
0 \to K_4^{(3)}(Y) \to K_4^{(3)}(F) \to \bigoplus_{x \in Y} K_3^{(2)}(k(x)),
\end{equation}
which enables us to view $K_4^{(3)}(Y)$ as a subspace of $K_4^{(3)}(F)$.

There are also residue maps at the level of the Goncharov complexes \cite[Sections 1.14--1.15]{Gon95}. More precisely, for any closed point $x \in X$, there is a morphism of complexes \cite[Section~1.15(c), p.~239]{Gon95}
\begin{equation*}
\Res_x \colon \; \Gamma(F,3) \to \Gamma(k(x),2)[-1]
\end{equation*}
which, in degree 2, sends the symbol $\{f\}_2 \otimes g$ to $\ord_x(g) \{f(x)\}_2$, with the convention $\{0\}_2 = \{1\}_2 = \{\infty\}_2 = 0$ in $B_2(k(x))$. Goncharov then defines $\Gamma(Y,3)$ as the simple complex (that is, the cone shifted by one) of the morphism of complexes
\begin{equation*}
\bigoplus_{x \in Y} \Res_x \colon \; \Gamma(F,3) \to \bigoplus_{x \in Y} \Gamma(k(x),2)[-1].
\end{equation*}
We thus have an exact sequence
\begin{equation*}
0 \to H^2(\Gamma(Y,3)) \to H^2(\Gamma(F,3)) \to \bigoplus_{x \in Y} H^1(\Gamma(k(x),2)),
\end{equation*}
which should be canonically isomorphic to \eqref{eq loc K4 2}.

In this direction, De Jeu has proved that the map $H^2(\Gamma(F,3)) \to K_4^{(3)}(F)$ commutes with taking residues up to an indeterminacy coming from $K_3^{(2)}(k)$. If $k$ is totally real, this group vanishes by Borel's theorem. This gives the following theorem.

\begin{theorem}[De Jeu] \label{thm DJ residues}
Assume $k$ is totally real. Then for every closed point $x \in X$, the following diagram commutes up to sign
\begin{equation*}
\begin{tikzcd}
H^2(\Gamma(F,3)) \arrow[d] \arrow[r, "2\Res_x"] & H^1(\Gamma(k(x), 2)) \arrow[d] \\
K_4^{(3)}(F) \arrow[r] & K_3^{(2)}(k(x)).
\end{tikzcd}
\end{equation*}
In particular De Jeu's map \eqref{eq De Jeu map K4} induces a map $H^2(\Gamma(Y,3)) \to K_4^{(3)}(Y)$.
\end{theorem}

Theorem \ref{thm DJ residues} follows from combining \cite[Corollary 5.4 and Remark 5.5]{Jeu96} and a direct check that the map $H^2(\Gamma(F,3)) \to H^2(\widetilde{\mathcal{M}}_{(3)}^{\bullet}(F))$ is compatible with taking residues, noting that \cite[Lemma 5.2]{Jeu00} allows one to define a map $H^1(\Gamma(k(x),2)) \to H^1(\widetilde{\mathcal{M}}_{(2)}^\bullet(k(x)))$.
 
For a general number field $k$, the following result will suffice for our needs.

\begin{theorem}[De Jeu] \label{construction K4Y}
Let $Y$ be a smooth (not necessarily projective) geometrically connected curve over a number field $k$, with function field $F=k(Y)$. Let $\xi = \sum_i n_i \{f_i\}_2 \otimes g_i$ be a degree 2 cocycle in the Goncharov complex $\Gamma(F,3)$, with $f_i, g_i \in F^\times$ and $n_i \in \Q$. Assume that all the functions $f_i$, $1-f_i$ and $g_i$ belong to $\mathcal{O}(Y)^\times$. Then the image of $\xi$ under De Jeu's map \eqref{eq De Jeu map K4} belongs to $K_4^{(3)}(Y)$.
\end{theorem}

\begin{proof}
The theorem follows essentially from \cite[Theorem 5.2]{Jeu96}. Let us indicate the details.
De Jeu builds the following complex $\mathcal{M}_{(3)}^\bullet(F)$ in degrees 1 to 3 (see \cite[Section 2]{Jeu00}):
\begin{equation*}
\begin{tikzcd}
M_{(3)}(F) \arrow{r} & M_{(2)}(F) \otimes F^\times_\Q \arrow[r, "\delta_2"] & F^\times_{\Q} \otimes \Lambda^2 F^\times_\Q,
\end{tikzcd}
\end{equation*}
where $M_{(n)}(F)$ is a $\Q$-vector space generated by elements $[x]_n$ with $x \in F \backslash \{0,1\}$.

Let $\xi' = \sum_i n_i [f_i]_2 \otimes g_i$ in $\mathcal{M}^2_{(3)}(F)$. Its boundary is $\delta_2(\xi') = \sum_i n_i \cdot (1-f_i) \otimes (f_i \wedge g_i)$ in $F^\times_\Q \otimes \Lambda^2 F^\times_\Q$. Let $W$ be the $\Q$-subspace of $F^\times_\Q$ generated by the functions $f_i$, $1-f_i$ and $g_i$. Since $\xi$ is a cocycle, $\sum_i n_i \cdot (1-f_i) \otimes f_i \otimes g_i$ maps to $0$ in $\Lambda^3 W$. This implies that $\delta_2(\xi')$ is a linear combination of symbols $u \otimes (u \wedge v)$ with $u,v \in W$. But such a symbol is the boundary of $([u]_2 + [u^{-1}]_2) \otimes v$, which maps to zero in the quotient complex $\widetilde{\mathcal{M}}^\bullet_{(3)}(F)$ constructed in \cite[Section 2, p.~529]{Jeu96}. We may thus modify $\xi'$ by such elements $([u]_2 + [u^{-1}]_2) \otimes v$ to get a cocycle $\xi''$ in $\mathcal{M}^2_{(3)}(F)$. Since $f_i,1-f_i,g_i$ are invertible on $Y$, we conclude by applying \cite[Theorem 5.2]{Jeu96} to $\xi''$ with $U=Y$.
\end{proof}

In view of Theorem \ref{construction K4Y}, we look for linear combinations of symbols $\{f\}_2 \otimes g$ with the additional condition that $f$, $1-f$ and $g$ are invertible on $Y$. We are thus led to consider the \emph{$S$-unit equation in $F$}
\begin{equation} \label{S-unit equation}
u + v = 1,
\end{equation}
where $u$ and $v$ are non-constant functions in $F$ whose zeros and poles are contained in $S$. It is known that this equation has only finitely many solutions (see Remark \ref{rk S-unit equation} below). Moreover $g$ lives in the $\Q$-vector space $\mathcal{O}(Y)^\times_\Q$, which is finite-dimensional after modding out by $k^\times_\Q$. Finally, the cocycle condition takes place in $\Lambda^3 \mathcal{O}(Y)^\times_\Q$, and the space $\Lambda^3 (\mathcal{O}(Y)^\times/k^\times)_\Q$ is also finite-dimensional. This essentially reduces our search to a linear algebra problem, which can be in principle implemented on a computer. Note that the $S$-unit equation may not have any solution if $S$ is too small. In this case, one may try to enlarge $S$, but it is not clear a priori which points should be added to $S$ in order to find solutions.

\begin{example} \label{ex3}
Continuing Example \ref{ex1}, let $E$ be the closure of the curve $(1+x)^2 (1+y)^2 = xy$ in $\PP^1_\Q \times \PP^1_\Q$. Let $Y = E \setminus S$ with $S = E(\Q)=\{(0,-1),(-1,0),(\infty,-1), (-1,\infty)\}$. Then the functions $f=-x$ and $g=-y$ satisfy the condition that $(f, 1-f, g, 1-g)$ belong to $\mathcal{O}(Y)^\times$. The boundary of the symbol $\tilde{\xi}_E = \{f\}_2 \otimes g - \{g\}_2 \otimes f$ is
\begin{equation*}
(1+x) \wedge (-x) \wedge (-y) - (1+y) \wedge (-y) \wedge (-x) = \frac12 \cdot (1+x)^2 (1+y)^2 \wedge x \wedge y = 0,
\end{equation*}
where we used the equation of $E$ (see also \cite[Section 4.1]{Lal15}). By Theorem \ref{construction K4Y}, the symbol $\tilde{\xi}_E$ gives rise to an element $\xi_E$ of $K_4^{(3)}(E \setminus S)$. Since $S$ consists of $\Q$-rational points and $K_3^{(2)}(\Q) = 0$, the residues of $\xi_E$ are trivial, so that $\xi_E$ actually belongs to $K_4^{(3)}(E)$.
\end{example}

\begin{remark} \label{rk S-unit equation}
The $S$-unit equation has mostly been studied when $F$ is a number field and $S$ is a set of finite places of $F$. It has applications to the study of integral points on algebraic curves, see the introduction of \cite[Chapter 8]{KL81}, \cite[Chapter 8]{Lan83} and \cite[Section IX.4]{Sil09}. The analogue of the identity \eqref{eq uabcd uacbd} for algebraic numbers is called Siegel's identity.

In the case $F$ is the function field of a smooth projective curve $X$ over an algebraically closed field, Mason \cite[Lemma, p.~222]{Mas83} has proved the following bound on the degree of a solution of \eqref{S-unit equation}:
\begin{equation} \label{eq mason bound}
\deg(u), \deg(v) \leqslant 2g-2+|S|,
\end{equation}
where $g$ is the genus of $X$ (see \cite{Sil84} for an alternative short proof; in the case $X = \PP^1$ over $\C$, this was first proved by Stothers \cite[Theorem 1.1]{Sto81}). As explained in \cite[p.~223]{Mas83}, the bound \eqref{eq mason bound} implies that the $S$-unit equation \eqref{S-unit equation} has only finitely many solutions, and in fact provides a method to find all the solutions. We implemented this method in Magma~\cite{Magma}, the idea being to loop over all possible divisors supported in $S$. In the case of elliptic curves, one may view this algorithm as an extension of Mellit's technique of parallel lines~\cite{Mel19}. Namely, the functions appearing in \cite{Mel19} have degree at most~$3$, while here the degree is arbitrary. Of course, looping over the divisors is unrealistic when the cardinality of $S$ or the Mason bound is too large; in practice, we are only able to deal with rather small degrees.

I learnt from A.~Javanpeykar that the finiteness of solutions can also be proved using the de Franchis-Severi theorem for hyperbolic curves \cite{Jav20}.
\end{remark}

\section{Constructing the elements in $K_4$ of modular curves} \label{sec: construction}

In this section, we construct the elements in $K_4^{(3)}(Y(N))$ using Theorem \ref{triangulation manin3} and the results of Goncharov and De Jeu from Section \ref{sec: goncharov de jeu}. We also explain in Section \ref{subsec: compactification} how to construct elements in $K_4^{(3)}(X(N))$, where $X(N)$ is the compactification of $Y(N)$. In Section~\ref{subsec: dep triangulation}, we study the impact of making other choices in the construction. In Section \ref{subsec: infinite level}, we define $K_4$ classes at infinite level. Finally, we show in Section \ref{subsec: modsym} that the elements constructed satisfy 3-term relations.

We view in this section $Y(N)$ as a curve over its field of constants $\Q(\zeta_N)$, so that $Y(N)$ is geometrically connected. We denote by $F$ be the function field of $Y(N)$.

In order to apply De Jeu's Theorem \ref{construction K4Y} with $Y = Y(N)$, we need to find symbols $\sum_i n_i \{f_i\}_2 \otimes g_i$ with $f_i \in F^\times$, $g_i \in F^\times_\Q$, $n_i \in \Q$, satisfying the following two conditions:

\begin{enumerate}
\item $\sum_i n_i \cdot f_i \wedge (1-f_i) \wedge g_i = 0$ in $\Lambda^3 F^\times_\Q$.
\item The functions $f_i$ and $1-f_i$ lie in $\mathcal{O}(Y(N))^\times$, and $g_i$ lies in $\mathcal{O}(Y(N))^\times_\Q$.
\end{enumerate}

As already mentioned at the end of Section \ref{sec: uabcd}, the modular units $u(\bolda, \boldb, \boldc, \boldd)$ fulfill the first part of condition (2). Note that these units are defined as cross-ratios, so it is very natural to consider the symbols $\{u(\bolda, \boldb, \boldc, \boldd)\}_2$ in $B_2(F)$.

\subsection{Definition of the elements} \label{def xiab xi1ab}

Let $\bolda, \boldb, \boldc \in (\Z/N\Z)^2$ with $\bolda + \boldb + \boldc = \boldzero$. We use Theorem \ref{triangulation manin3} with $G=(\Z/N\Z)^2$, and write the triangulation \eqref{eq manin3} as follows:
\begin{equation*}
g_\bolda \wedge g_\boldb + g_\boldb \wedge g_\boldc + g_\boldc \wedge g_\bolda = \sum_i n_i \cdot u_i \wedge (1-u_i) \qquad \textrm{in } \Lambda^2 F^\times_\Q,
\end{equation*}
with coefficients $n_i \in \Q$ and modular units $u_i \in \mathcal{O}(Y(N))^\times$.

\begin{definition} \label{def tilde xi ab}
Let $\tilde{\xi}(\bolda, \boldb)$ be the following cochain in the Goncharov complex of $F$:
\begin{equation} \label{eq tilde xi ab}
\tilde{\xi}(\bolda, \boldb) = \Bigl(\sum_i n_i \{u_i\}_2\Bigr) \otimes \frac{g_\boldb}{g_\bolda} \in \Gamma^2(F,3).
\end{equation}
\end{definition}

\begin{proposition} \label{pro xiab cocycle}
The cochain $\tilde{\xi}(\bolda, \boldb)$ is a cocycle, and its image in $K_4^{(3)}(F)$ under De Jeu's map belongs to $K_4^{(3)}(Y(N))$.
\end{proposition}

\begin{proof}
The boundary of $\tilde{\xi}(\bolda, \boldb)$ is given by
\begin{align*}
\delta(\tilde{\xi}(\bolda, \boldb)) & = \sum_i n_i \cdot (1-u_i) \wedge u_i \wedge \frac{g_\boldb}{g_\bolda} = - (g_\bolda \wedge g_\boldb + g_\boldb \wedge g_\boldc + g_\boldc \wedge g_\bolda) \wedge \frac{g_\boldb}{g_\bolda} \\
& = g_\boldb \wedge g_\boldc \wedge g_\bolda - g_\boldc \wedge g_\bolda \wedge g_\boldb = 0,
\end{align*}
so the condition (1) above is satisfied. Since the $u_i$ are of the form $u(\boldx, \boldy, \boldz, \boldt)$, the condition~(2) is also satisfied, and the result follows from De Jeu's Theorem \ref{construction K4Y}.
\end{proof}

\begin{definition} \label{def xi ab}
For any $\bolda, \boldb \in (\Z/N\Z)^2$, we denote by $\xi(\bolda, \boldb)$ the image of $\tilde{\xi}(\bolda, \boldb)$ in $K_4^{(3)}(Y(N))$.
\end{definition}

Assume we are in the special case $\bolda = (0,a)$ and $\boldb = (0,b)$ with $a,b \in \Z/N\Z$. Then we may use Theorem \ref{triangulation manin3} with the subgroup $G = \{0\} \times \Z/N\Z$. By Proposition \ref{pro u1abcd}, this produces a triangulation with modular units in $\mathcal{O}(Y_1(N))^\times$. We then define, as in \eqref{eq tilde xi ab},
\begin{equation} \label{eq xi1tilde}
\tilde{\xi}_1(a,b) = \Bigl(\sum_i n_i \{u_i\}_2\Bigr) \otimes \frac{g_{0,b}}{g_{0,a}},
\end{equation}
which now lives in the Goncharov complex of the function field of $Y_1(N)$. The same arguments as in Proposition \ref{pro xiab cocycle} show that $\tilde{\xi}_1(a,b)$ is a cocycle and that its image under De Jeu's map belongs to $K_4^{(3)}(Y_1(N))$.

\begin{definition} \label{def xi1 ab}
For any $a, b \in \Z/N\Z$, we denote by $\xi_1(a,b)$ the image of $\tilde{\xi}_1(a, b)$ in $K_4^{(3)}(Y_1(N))$.
\end{definition}

We will see in Sections \ref{sec: num beilinson} and \ref{sec: comparison} that the elements $\xi_1(a,b)$ can be non-trivial, by computing numerically their images under the Beilinson regulator map.

\begin{example} \label{ex4}
Continuing Example \ref{ex2}, consider the modular curve $Y_1(15)$. Then the element $\xi_1(1,4)$ is the image of the cocycle
\begin{equation*}
\tilde{\xi}_1(1,4) = \frac{1}{15} \Bigl(\sum_{x \in \Z/15\Z} \{u_1(0, x, 1-x, 4+x)\}_2\Bigr) \otimes \frac{g_{0,4}}{g_{0,1}},
\end{equation*}
where we only keep the terms for which $(0, x, 1-x, 4+x)$ are distinct in $(\Z/15\Z)/\pm 1$.
\end{example}

If $N$ is odd, then the triangulation \eqref{eq manin3 odd} leads to the same cocycles $\tilde{\xi}(\bolda, \boldb)$ and $\tilde{\xi}_1(a,b)$, and thus to the same elements $\xi(\bolda, \boldb)$ and $\xi_1(a,b)$. Indeed, for any $f \in F \setminus \{0,1\}$, we have $\{1/f\}_2 = - \{f\}_2$ in $B_2(F)$ by \cite[VI, Lemma 5.4(b)]{Wei13}. Then for fixed $\boldsymbol\alpha, \boldsymbol\beta \in (\Z/N\Z)^2$, we have in $B_2(F)$:
\begin{equation*}
\sum_{\boldx, \boldy \in (\Z/N\Z)^2} \{u(\boldsymbol\alpha, \boldsymbol\beta, \boldx, \boldy)\}_2 = \sum_{\boldx, \boldy \in (\Z/N\Z)^2} \{u(\boldsymbol\alpha,\boldsymbol\beta, \boldy, \boldx)^{-1}\}_2 = - \sum_{\boldx, \boldy \in (\Z/N\Z)^2} \{u(\boldsymbol\alpha, \boldsymbol\beta, \boldy, \boldx)\}_2,
\end{equation*}
which shows the claim. (In these sums, we only keep the terms for which $(\boldsymbol\alpha, \boldsymbol\beta, \boldx, \boldy)$ are distinct in $(\Z/N\Z)^2/\pm 1$.)

It is possible to produce $K_4$ elements for modular curves associated to arbitrary congruence subgroups, but in general the cocycles are not explicit anymore. More precisely, if $\Gamma$ is a subgroup of $\GL_2(\Z/N\Z)$ and $Y(\Gamma) = \Gamma \backslash Y(N)$ is the associated modular curve, we may consider the images of the elements $\xi(\bolda, \boldb)$ under the trace map $K_4^{(3)}(Y(N)) \to K_4^{(3)}(Y(\Gamma))$. However, consider the modular curve $Y_0(p)$ with $p$ prime. This curve has only two cusps, hence the group of modular units modulo the constants has rank $1$. In this case it is not possible to write down non-trivial cocycles using only modular units.

\subsection{Extension to the compactification} \label{subsec: compactification}

The elements $\xi(\bolda, \boldb)$ live on the open modular curve $Y(N)$. In view of Beilinson's conjecture on special values of $L$-functions, it is of interest to construct elements in the $K$-theory of a smooth projective variety. We explain in this section how to define elements in $K_4^{(3)}(X(N))$ and $K_4^{(3)}(X_1(N))$, where $X(N)$ and $X_1(N)$ are the smooth compactifications of $Y(N)$ and $Y_1(N)$, respectively.
We will use without mention Borel's theorem on the $K$-groups of number fields \cite[Theorem IV.1.18]{Wei13}.

Let $S = X(N) \setminus Y(N)$ be the closed subscheme of cusps of $X(N)$. We have the localisation exact sequence in $K$-theory
\begin{equation} \label{eq loc XY}
0 \to K_4^{(3)}(X(N)) \to K_4^{(3)}(Y(N)) \xrightarrow{\Res} K_3^{(2)}(S)
\end{equation}
(see \cite[Théorème 9]{Sou85}, \cite[Corollary, p.~167]{Tam88}). We say that an element $\xi$ of $K_4^{(3)}(Y(N))$ extends to $X(N)$ if it is the image of a (unique) element of $K_4^{(3)}(X(N))$ in \eqref{eq loc XY}. This amounts to say that the residue of $\xi$ at every cusp is trivial. Although the elements $\xi(\bolda, \boldb)$ do not a priori extend to $X(N)$, we can modify them so that they do extend.

\begin{proposition} \label{pro localisation retraction}
The restriction map $K_4^{(3)}(X(N)) \to K_4^{(3)}(Y(N))$ admits a canonical retraction.
\end{proposition}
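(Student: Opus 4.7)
The plan is to reduce the problem, via Borel's theorem, to constructing a natural section of the residue map.

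By Borel's theorem, $K_4^{(2)}(k) \otimes \Q = 0$ for every number field $k$ (even $K$-groups of number fields are torsion), whence $H^0_\M(S, \Q(2)) = 0$. The localisation sequence displayed before the proposition therefore reduces to a short exact sequence
\begin{equation*}
0 \to H^2_\M(X, \Q(3)) \to H^2_\M(Y, \Q(3)) \xrightarrow{\partial} V \to 0,
\end{equation*}
with $V = \ker\bigl( H^1_\M(S, \Q(2)) \to H^3_\M(X, \Q(3)) \bigr)$. Giving a natural retraction of the first arrow is equivalent to giving a natural section $s : V \to H^2_\M(Y, \Q(3))$ of $\partial$; the retraction is then $\xi \mapsto \xi - s(\partial \xi)$.

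I would build $s$ by exploiting the fact that every cusp $x \in S$ of a modular curve is defined over a cyclotomic field, so that $K_3^{(2)}(k(x)) \otimes \Q$ is generated by explicit cyclotomic / Beilinson elements. Each such generator should lift naturally to $H^2_\M(Y, \Q(3))$ via Beilinson's Eisenstein symbol in weight~$3$, which produces motivic-cohomology classes on the open modular curve out of cusp data. Combining these lifts Galois-equivariantly across all cusps would yield the section~$s$.

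\textbf{Main obstacle.} The delicate point is verifying that the Eisenstein lifts assemble into a genuine section of $\partial$, which requires an explicit computation of residues at every cusp and a careful matching with the Borel generators of $K_3^{(2)}$. A cleaner, more abstract alternative is to invoke a Hecke-algebra idempotent: the Hecke algebra acts compatibly on $H^2_\M(Y, \Q(3))$ and on $V$, and on $V$ it acts through Eisenstein eigenvalues; a suitable idempotent then projects $H^2_\M(Y, \Q(3))$ onto a canonical complement of the image of~$\partial$, providing the splitting without any explicit residue calculation.
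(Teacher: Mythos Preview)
Your reduction to constructing a section of the residue map is sound, but the proposed construction of the section via Eisenstein symbols or Hecke idempotents is far more elaborate than what is needed, and as you yourself note, neither route is actually carried out.

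The paper's argument is more elementary and avoids your ``main obstacle'' entirely. After base-changing to the splitting field $k'$ of the cusps (and invoking Galois descent at the end), one simply takes as complement the subspace
\[
T = \langle\, H^1_\M(k',\Q(2)) \cup \mathcal{O}(Y_{k'})^\times \,\rangle \subset H^2_\M(Y_{k'},\Q(3)),
\]
generated by cup products of Borel classes with \emph{modular units}. The residue of such a cup product is immediate: $\Res(\lambda \cup u) = \lambda \otimes \dv(u)$. By the Manin--Drinfeld theorem the divisors of modular units span, over~$\Q$, the degree-zero divisors supported on the cusps, so $\Res(T)$ hits all of $\ker(\Sigma) = V$. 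The intersection $T \cap H^2_\M(X_{k'},\Q(3))$ lands in $H^2_\M(k',\Q(3))$, which vanishes by Borel. Hence $H^2_\M(Y_{k'},\Q(3)) = H^2_\M(X_{k'},\Q(3)) \oplus T$, and since $T$ is visibly $\Gal(k'/k)$-stable the splitting descends.

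In short: rather than lifting Borel generators via Eisenstein symbols and then checking residues, one lifts them via the trivial formula $\lambda \mapsto \lambda \cup u$ and reads the residue off directly. The only nontrivial inputs are Manin--Drinfeld and Borel's vanishing of $H^2_\M(k',\Q(3))$; no explicit matching with cyclotomic elements, and no Hecke theory, is required.
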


This proposition is an analogue of Bloch's trick to construct an element of $K_2^{(2)}(X)$ from an element of $K_2^{(2)}(X \setminus S)$, where $X$ is a smooth projective curve and $S$ is a finite subset of~$X$ having the property that the difference of any two geometric points of $S$ has finite order in the Jacobian of $X$ (see \cite[(8.2)]{Blo10} for the case of elliptic curves).

\begin{proof}
Let $k = \Q(\zeta_N)$. Recall that the cusps of $X(N)$ are all defined over $k$. Write $i \colon S \hookrightarrow X(N)$ for the closed immersion and $\pi \colon X(N) \to \Spec k$ for the structural morphism. We have the following diagram
\begin{equation} \label{eq diag K4}
\begin{tikzcd}[sep=scriptsize]
0 \arrow{r} & K_4^{(3)}(X(N)) \arrow{r} & K_4^{(3)}(Y(N)) \arrow{r}{\Res} & \displaystyle \bigoplus_{x \in S} K_3^{(2)}(k) \arrow{r}{i_*} \arrow[dr, swap, "\Sigma"] & K_3^{(3)}(X(N)) \arrow{d}{\pi_*} \\
& & & &  K_3^{(2)}(k)
\end{tikzcd}
\end{equation}
where the first row is the localisation exact sequence. The diagonal arrow $\Sigma$ is the sum map because $\pi \circ i \colon S \to \Spec k$ consists of copies of the identity map of $\Spec k$.

Let $T$ be the subspace of $K_4^{(3)}(Y(N))$ generated by the elements of the form $\lambda \cup u$ where $\lambda \in K_3^{(2)}(k)$ and $u \in \mathcal{O}(Y(N))^\times$. The residue of $\lambda \cup u$ in the above sequence is given by $\lambda \otimes \dv(u)$ (see \cite[1.3.2.(2)]{Deg08} applied to the closed immersion $i$ and to $Y = \Spec k$).  By the Manin--Drinfel'd theorem \cite{Dri73}, the difference of any two cusps of $X(N)$ is torsion in the Jacobian of $X(N)$. This implies that $\Res(T) = \ker(\Sigma)$. We claim that
\begin{equation} \label{eq decomposition H23Y}
K_4^{(3)}(Y(N)) = K_4^{(3)}(X(N)) \oplus T.
\end{equation}
The fact that $K_4^{(3)}(Y(N))$ is generated by $K_4^{(3)}(X(N))$ and $T$ follows from the diagram \eqref{eq diag K4}. Now consider the composite map
\begin{equation*}
K_3^{(2)}(k) \otimes \mathcal{O}(Y(N))^\times \stackrel{\cup}{\longrightarrow} T \xrightarrow{\Res} \bigoplus_{x \in S} K_3^{(2)}(k).
\end{equation*}
The kernel of this map is $K_3^{(2)}(k) \otimes k^\times$. Therefore the intersection of $K_4^{(3)}(X(N))$ and $T$ is contained in $K_4^{(3)}(k) = 0$. The decomposition \eqref{eq decomposition H23Y} provides the desired retraction.
\end{proof}

\begin{definition}
For $\bolda, \boldb \in (\Z/N\Z)^2$, we denote by $\xi'(\bolda, \boldb) \in K_4^{(3)}(X(N))$ the image of $\xi(\bolda, \boldb)$ under the retraction of Proposition \ref{pro localisation retraction}.
\end{definition}

We can proceed similarly with the modular curve $Y_1(N)$. As in \eqref{eq loc XY}, there is a localisation sequence
\begin{equation*}
0 \to K_4^{(3)}(X_1(N)) \to K_4^{(3)}(Y_1(N)) \xrightarrow{\Res} K_3^{(2)}(S_1),
\end{equation*}
where $S_1$ is the subscheme of cusps of $X_1(N)$. The cusps of $X_1(N)$ are defined over $\Q(\zeta_N)$, and the same proof as in Proposition \ref{pro localisation retraction} shows that the restriction map
\begin{equation*}
K_4^{(3)}(X_1(N)_{\Q(\zeta_N)}) \to K_4^{(3)}(Y_1(N)_{\Q(\zeta_N)})
\end{equation*}
has a $\Gal(\Q(\zeta_N)/\Q)$-equivariant retraction. Since motivic cohomology with $\Q$-coefficients satisfies Galois descent (see \cite[Section C.1, Theorem 5]{CD19} or \cite[1.3(6)]{DS91}), this provides a canonical retraction
\begin{equation} \label{eq retraction X1N}
K_4^{(3)}(Y_1(N)) \to K_4^{(3)}(X_1(N)).
\end{equation}

\begin{definition}
For $a, b \in \Z/N\Z$, we denote by $\xi'_1(a, b) \in K_4^{(3)}(X_1(N))$ the image of $\xi_1(a, b)$ under the retraction \eqref{eq retraction X1N}.
\end{definition}

In order to give an example, we first recall some facts about the cusps of $X_1(N)$, and the Galois action on them. The set $\Gamma_1(N) \backslash \PP^1(\Q)$ of cusps of $X_1(N)(\C)$ is in bijection with
\begin{equation*}
\{(c,d) : c \in \Z/N\Z, \, d \in (\Z/(c,N)\Z)^\times\}/ \pm 1.
\end{equation*}
This bijection sends a cusp $\gamma \infty$ with $\gamma \in \SL_2(\Z)$ to the class of the bottom row of $\gamma$ (see \cite[Example 9.1.3]{DI95}). The action of $\Aut(\C)$ on the set of cusps is then given as follows (compare with \cite[Theorem 1.3.1]{Ste82}, where another model of $X_1(N)$ is used).

\begin{lemma} \label{lem galois action cusps}
Let $\sigma \in \Aut(\C)$. If a cusp $x$ of $X_1(N)(\C)$ is represented by the pair $(c,d)$ using the above bijection, then $\sigma(x)$ is represented by $(c, \chi(\sigma) d)$, where $\chi(\sigma) \in (\Z/N\Z)^\times$ is defined by $\sigma(e^{2\pi i/N}) = e^{2\pi i \chi(\sigma)/N}$.
\end{lemma}

\begin{proof}
(See \cite[Section 9.3, p.~79]{DI95}.) Using the interpretation of $X_1(N)$ as a moduli space of generalised elliptic curves, the cusp $x$ corresponds to the isomorphism class of a Néron $N'$-gon $E$ with $N' = N/(c,N)$, equipped with the point $P = e^{2\pi id/N}$ on the $c'$th component of $E$, with $c' = c/(c,N) \in \Z/N'\Z$. It follows that $\sigma(x)$ corresponds to the same Néron $N'$-gon with the point $\sigma(P) = e^{2\pi i \chi(\sigma) d/N}$ on the $c'$th component.
\end{proof}

\begin{proposition} \label{pro N prime}
Assume $N=p$ or $N=2p$ where $p$ is prime. Then for any $a,b$ in $\Z/N\Z$, the element $\xi_1(a,b)$ extends to $X_1(N)$, and we have $\xi'_1(a,b)=\xi_1(a,b)$.
\end{proposition}

\begin{proof}
Using Lemma \ref{lem galois action cusps} with the complex conjugation, we see that all the cusps of $X_1(N)(\C)$ are real. It follows that the residue field $\Q(x)$ of a cusp $x$ of $X_1(N)$ is totally real, and this implies that $K_3^{(2)}(\Q(x)) = 0$. Since this is true for every $x$, the elements $\xi_1(a,b)$ extend to $X_1(N)$, and $\xi'_1(a,b) = \xi_1(a,b)$.
\end{proof}

We now turn to our running example. We will need the following general result to compute the divisor of a Siegel unit on $Y_1(N)$.

\begin{lemma} \label{lem div g0a}
Let $N \geqslant 5$ be an integer. Let $a \in \Z/N\Z$, $a \neq 0$, and let $x$ be a cusp of $X_1(N)(\C)$, represented by the pair $(c,d)$ as above. Then the order of vanishing of $g_{0,a}$ at the cusp $x$ is given by
\begin{equation*}
\operatorname{ord}_x(g_{0,a}) = \frac{N}{2(c,N)} B_2\Bigl(\frac{\widetilde{ac}}{N}\Bigr),
\end{equation*}
where $\widetilde{ac}$ is the representative of $ac$ in $\{0, \ldots, N-1\}$.
\end{lemma}

\begin{proof}
Let $\gamma \in \SL_2(\Z)$ such that $\gamma \infty = x$ and the bottom row of $\gamma$ is congruent to $(c,d)$. By the transformation formula for Siegel functions under $\SL_2(\Z)$ \cite[Section 5.1]{BN18}, we have $g_{0,a} \circ \gamma = \zeta \cdot g_{ac, ad}$, where $\zeta$ is a root of unity. By Definition \ref{def gx}, the Fourier expansion of $g_{ac,ad}$ at infinity is of the form $\alpha q^{\lambda} + \cdots$, where $\alpha$ is a non-zero constant and $\lambda = \frac12 B_2\bigl(\frac{\widetilde{ac}}{N}\bigr)$. A local coordinate at the cusp $x$ of $X_1(N)(\C)$ is given by $e^{2\pi i (\gamma^{-1}\tau)/w}$, where $w = N/(c,N)$ is the width of $x$ (see \cite[Section 9.1]{DI95} and \cite[Section 6.3]{CS17}). The formula follows.
\end{proof}

\begin{example} \label{ex5}
Consider the element $\xi_1(1,4)$ of $K_4^{(3)}(Y_1(15))$ from Example \ref{ex4}. Since the modular curve $X_1(15)$ is defined over $\Q$, we can use De Jeu's Theorem \ref{thm DJ residues} to compute the residues of $\xi_1(1,4)$ at the cusps. Looking at the definition of the residue map on the Goncharov complex, we have to compute the divisor of $g_{0,4}/g_{0,1}$. With Lemma \ref{lem div g0a}, we find
\begin{equation*}
\dv\Bigl(\frac{g_{0,4}}{g_{0,1}}\Bigr) = - [0] - \Bigl[\frac12\Bigr] + \Bigl[\frac14\Bigr] + \Bigl[\frac17\Bigr].
\end{equation*}
By Lemma \ref{lem galois action cusps}, the cusps involved in this divisor are defined over $\Q$. Since $K_3^{(2)}(\Q) = 0$, we conclude that $\xi_1(1,4)$ extends to $X_1(15)$.
\end{example}

The numerical computations in Section \ref{sec: num beilinson} show that the elements $\xi_1(a,b)$ do not always extend to $X_1(N)$. For example, in the case $N=15$, the element $\xi_1(1,3)$ has a non-trivial residue at the cusp $1/5$.

\subsection{Dependence on the triangulation} \label{subsec: dep triangulation}

We defined the various elements $\xi(\bolda, \boldb)$, $\xi_1(a,b)$, $\xi'(\bolda, \boldb)$, $\xi'_1(a,b)$ using the triangulation of $g_\bolda \wedge g_\boldb + g_\boldb \wedge g_\boldc + g_\boldc \wedge g_\bolda$ provided by Theorem \ref{triangulation manin3}, but we could have chosen any other triangulation to define elements in $K_4$. In this section, we study the dependence on this choice.

Say we have two triangulations in $\Lambda^2 F^\times_\Q$:
\begin{equation*}
g_\bolda \wedge g_\boldb + g_\boldb \wedge g_\boldc + g_\boldc \wedge g_\bolda = \sum_i n_i \cdot u_i \wedge (1-u_i) = \sum_j n'_j \cdot v_j \wedge (1-v_j).
\end{equation*}
Then
\begin{equation*}
\sum_i n_i \{u_i\}_2 - \sum_j n'_j \{v_j\}_2
\end{equation*}
is an element of $H^1(\Gamma(F,2))$. Suslin's rigidity conjecture \cite[Conjecture 5.4]{Sus86} asserts that the canonical map $H^1(\Gamma(\Q(\zeta_N), 2)) \to H^1(\Gamma(F,2))$ is an isomorphim. Therefore the two triangulations differ by an element $\lambda$ of $H^1(\Gamma(\Q(\zeta_N), 2)) \cong K_3^{(2)}(\Q(\zeta_N))$. It should be the case that the two resulting elements in $K_4^{(3)}(Y(N))$ differ by $\lambda \cup (g_\boldb/g_\bolda)$. This would follow from the compatibility of De Jeu's map \eqref{eq De Jeu map K4} with the cup-product $K_3^{(2)} \times K_1^{(1)} \to K_4^{(3)}$. As in the proof of Proposition \ref{pro localisation retraction}, the residue of $\lambda \cup (g_\boldb/g_\bolda)$ at a cusp $x$ is equal to $\ord_x(g_\boldb/g_\bolda) \lambda$, where $\ord_x$ denotes the order of vanishing at $x$. Since $\Q(\zeta_N)$ is not totally real for $N \geqslant 3$, this shows that $\xi(\bolda, \boldb)$ can depend on the triangulation. However, the element $\lambda \cup (g_\boldb/g_\bolda)$ is killed by the retraction of Proposition \ref{pro localisation retraction}. Therefore, the element $\xi'(\bolda, \boldb)$ of $K_4^{(3)}(X(N))$ should not depend on the triangulation.

For the modular curve $X_1(N)$, since its field of constants is $\Q$ and $K_3^{(2)}(\Q) = 0$, we have the following conditional independence result.

\begin{proposition} \label{pro indep triangulation}
Let $a, b \in \Z/N\Z$. Assuming Suslin's rigidity conjecture for the function field of $X_1(N)$, the elements $\xi_1(a,b)$ and $\xi'_1(a,b)$ do not depend on the choice of triangulation.
\end{proposition}

It seems hard to prove the independence for arbitrary triangulations. However, the following problem may be more accessible. The proof of Lemma \ref{lem delta 5-term} shows that for any family $(\bolda_j)_{j \in \Z/5\Z}$ of elements of $(\Z/N\Z)^2/\pm 1$, we have the $5$-term relation in $B_2(F)$
\begin{equation} \label{eq 5term B2}
\sum_{j \in \Z/5\Z} \{u(\bolda_j, \bolda_{j+1}, \bolda_{j+2}, \bolda_{j+3})\}_2 = 0.
\end{equation}
There are also ``unexpected'' duplicates among the $u(\bolda, \boldb, \boldc, \boldd)$, in other words equalities
\begin{equation} \label{eq u equal}
u(\bolda, \boldb, \boldc, \boldd) = u(\bolda', \boldb', \boldc', \boldd')
\end{equation}
which do not follow from the definition as a cross-ratio. In order to prove \eqref{eq u equal}, one computes the divisors of both sides using Proposition \ref{pro uabcd gx} and a straightforward generalisation of Lemma~\ref{lem div g0a}, together with the leading coefficient of the Fourier expansion of these modular units at infinity using Proposition \ref{pro uabcd gx}. In the case of the modular units $u_1(a,b,c,d)$, it is in fact simpler to consider the leading coefficient at the cusp $0$, as one can show using the transformation formula stated in the proof of Lemma \ref{lem div g0a} that this leading coefficient is always $\pm1$.

This raises the following question: given two triangulations of $g_\bolda \wedge g_\boldb + g_\boldb \wedge g_\boldc + g_\boldc \wedge g_\bolda$ involving only units of the form $u(\boldx, \boldy, \boldz, \boldt)$, can one show, using \eqref{eq 5term B2} and \eqref{eq u equal}, that they give rise to the same element of $B_2(F)$? At least, can one show that the two resulting elements in $K_4^{(3)}(Y(N))$ are equal, using also coboundaries in the Goncharov complex $\Gamma(F, 3)$?

\begin{example} \label{ex6}
Consider again the element $\xi_1(1,4)$ in $K_4^{(3)}(Y_1(15))$. We found in Example~\ref{ex2} another triangulation for $g_{0,1} \wedge g_{0,4} + g_{0,4} \wedge g_{0,-5} + g_{0,-5} \wedge g_{0,1}$. By Proposition \ref{pro indep triangulation}, it follows that under Suslin's conjecture, $\xi_1(1,4)$ is also the image of the cocycle
\begin{equation*}
\frac15 \bigl(\{u_1(0,1,2,4)\}_2 + \{u_1(0,1,3,6)\}_2 - 2 \{u_1(1,2,3,5)\}_2 - \{u_1(1,2,4,6)\}_2 \bigr) \otimes \frac{g_{0,4}}{g_{0,1}}.
\end{equation*}
Using \eqref{eq 5term B2}, \eqref{eq u equal} and a computer, one can prove that the two aforementioned triangulations give rise to the same element in $B_2(\Q(Y_1(15)))$. Therefore, the two resulting elements of $K_4^{(3)}(Y_1(15))$ are equal unconditionally.
\end{example}

\subsection{Classes at infinite level} \label{subsec: infinite level}

We now investigate how the elements $\xi(\bolda, \boldb)$ and $\xi'(\bolda, \boldb)$ vary with $N$. It will be convenient to work with the tower of modular curves $Y(N)$, using the canonical projection maps $Y(N') \to Y(N)$ for $N$ dividing $N'$.

Given $\boldx \in (\Q/\Z)^2$, choose $N \geqslant 1$ such that $N\boldx = \boldzero$. Identifying $\frac{1}{N} \Z/\Z$ with $\Z/N\Z$, we have a Siegel unit $g_\boldx$ in $\mathcal{O}(Y(N))^\times \otimes \Q$. From Definition \ref{def gx}, we see that its image in
\begin{equation*}
\mathcal{O}(Y(\infty))^\times_\Q := \varinjlim_{N \geqslant 1} \mathcal{O}(Y(N))^\times_\Q
\end{equation*}
does not depend on the choice of $N$.

Now, let $\bolda, \boldb$ be two elements of $(\Q/\Z)^2$, and choose an integer $N \geqslant 1$ such that $N \bolda = N \boldb = \boldzero$. Proceeding similarly, we obtain an element
\begin{equation*}
\xi'(\bolda, \boldb) \in K_4^{(3)}(X(\infty)) := \varinjlim_{N \geqslant 1} K_4^{(3)}(X(N)).
\end{equation*}
Since a triangulation in level $N$ gives a triangulation in any level $N'$ divisible by $N$, the discussion in Section \ref{subsec: dep triangulation} shows that this $\xi'(\bolda, \boldb)$ should not depend on the choices of $N$ and of the triangulation. If we stick to triangulations involving units of the form $u(\boldx, \boldy, \boldz, \boldt)$, one way to show this would be to use \eqref{eq 5term B2} and \eqref{eq u equal}. This may even give a well-defined class $\xi(\bolda, \boldb)$ on $Y(\infty)$.

In the case of the tower of modular curves $X_1(N)$, we may define similarly, for $a,b \in \Q/\Z$ and for a choice of $N$ such that $Na = Nb = 0$,
\begin{equation*}
\begin{split}
\xi_1(a,b) \in K_4^{(3)}(Y_1(\infty)) & := \varinjlim_{N \geqslant 1} K_4^{(3)}(Y_1(N)), \\
\xi'_1(a,b) \in K_4^{(3)}(X_1(\infty)) & := \varinjlim_{N \geqslant 1} K_4^{(3)}(X_1(N)).
\end{split}
\end{equation*}
Under Suslin's conjecture, these elements are independent of choices by Proposition \ref{pro indep triangulation}, and one may try to show this unconditionally for triangulations involving only the units $u_1(x,y,z,t)$.

\subsection{Analogy with modular symbols} \label{subsec: modsym}

To conclude this section, we show that the elements $\xi(\bolda, \boldb)$ satisfy $3$-term relations, like the Beilinson--Kato elements $\{g_\bolda, g_\boldb\}$ in the group $K_2^{(2)}(Y(N))$. We actually expect $3$-term relations for the Beilinson elements in the group $K_{2n-2}^{(n)}(Y(N)) \cong H^2_\M(Y(N), \Q(n))$ for any $n \geqslant 2$, but this is not known for $n \geqslant 3$. (For the comparison between $\xi(\bolda, \boldb)$ and the Beilinson elements, see Section \ref{sec: comparison}.)

First, the elements $\xi(\bolda, \boldb)$ satisfy the transformation formula
\begin{equation*}
\xi(\bolda, \boldb) | \gamma = \xi(\bolda\gamma, \boldb\gamma) \qquad (\bolda, \boldb \in (\Z/N\Z)^2, \; \gamma \in \GL_2(\Z/N\Z)).
\end{equation*}
This is actually true at the level of the cocycles $\tilde{\xi}(\bolda, \boldb)$, thanks to Proposition \ref{pro uabcd transform} and the transformation formula $g_\bolda | \gamma = g_{\bolda\gamma}$ mentioned before that proposition.

\begin{proposition} \label{pro xiG 3-term}
For any $\bolda, \boldb, \boldc \in (\Z/N\Z)^2$ with $\bolda + \boldb + \boldc = \boldzero$, we have
\begin{equation} \label{eq xiG 3-term}
\xi(\bolda, \boldb) + \xi(\boldb, \boldc) + \xi(\boldc, \bolda) = 0,
\end{equation}
and similarly for the elements $\xi'(\bolda, \boldb)$. If $N$ is odd, we also have $\xi(\boldb, \bolda) = \xi(\bolda, \boldb)$ for any $\bolda, \boldb \in (\Z/N\Z)^2$.
\end{proposition}

\begin{proof}
We show that \eqref{eq xiG 3-term} already holds at the level of cocycles. Write
\begin{equation} \label{eq Tab}
\begin{split}
T(\bolda, \boldb) = & \frac{1}{N^2} \sum_{x \in (\Z/N\Z)^2} \{u(\boldzero, \boldx, \bolda-\boldx, \boldb+\boldx)\}_2 \\
& - \frac1{4N^4} \sum_{\boldx, \boldy \in (\Z/N\Z)^2} \{u(\boldzero, \bolda, \boldc+2\boldx, \boldy)\}_2 + \{u(\boldzero, \boldc, \boldb+2\boldx, \boldy)\}_2 + \{u(\boldzero, \boldb, \bolda+2\boldx, \boldy)\}_2
\end{split}
\end{equation}
for the triangulation of Theorem \ref{triangulation manin3}. The second line of \eqref{eq Tab} is invariant under cyclic permutation of $(\bolda, \boldb, \boldc)$. For the first line, we have
\begin{align*}
\sum_{\boldx \in (\Z/N\Z)^2} \{u(\boldzero, \boldx, \bolda-\boldx, \boldb+\boldx)\}_2 & = \sum_{\boldx \in (\Z/N\Z)^2} \{u(\boldzero, -\boldx, -\bolda+\boldx, \boldb+\boldx)\}_2 \\
& \stackrel{\boldx=\boldy-\boldb}{=} \sum_{\boldy \in (\Z/N\Z)^2} \{u(\boldzero, \boldb-\boldy, \boldc+\boldy, \boldy)\}_2 \\
& = \sum_{\boldy \in G} \{u(\boldzero, \boldy, \boldb-\boldy, \boldc+\boldy)\}_2
\end{align*}
since $\{1-1/f\}_2 = \{f\}_2$ by \cite[VI, Lemma 5.4]{Wei13}. Thus $T(\bolda, \boldb) = T(\boldb, \boldc) = T(\boldc, \bolda)$ and
\begin{equation*}
\tilde{\xi}(\bolda, \boldb) + \tilde{\xi}(\boldb, \boldc) + \tilde{\xi}(\boldc, \bolda) = T(\bolda, \boldb) \otimes \Bigl(\frac{g_\boldb}{g_\bolda} \frac{g_\boldc}{g_\boldb} \frac{g_\bolda}{g_\boldc} \Bigr) = 0.
\end{equation*}
The proof of $\tilde{\xi}(\boldb, \bolda) = \tilde{\xi}(\bolda, \boldb)$ for $N$ odd is similar.
\end{proof}

Numerical experiments suggest that the elements $\xi'(\bolda, \boldb)$ also satisfy the 2-term relations $\xi'(\bolda, \boldb) + \xi'(\boldb, -\bolda) = 0$, analogous to those for modular symbols, as well as $\xi'(-\bolda, \boldb) = \xi'(\bolda, -\boldb) = -\xi'(\bolda, \boldb)$, but we were not able to prove them---maybe another triangulation is needed. We also do not know whether these relations hold for the elements $\xi(\bolda, \boldb)$.

Proposition \ref{pro xiG 3-term} gives some hope to find an inductive procedure to construct elements in $H^2_\M(Y(N),\Q(n)) \cong K_{2n-2}^{(n)}(Y(N))$ for $n \geqslant 4$.

\section{Numerical computation of the regulator} \label{sec: num}

The aim of this section is to explain how to compute numerically certain regulator integrals associated to the elements $\xi(\bolda, \boldb)$ in $K_4^{(3)}(Y(N))$ and $\xi_1(a,b)$ in $K_4^{(3)}(Y_1(N))$. As an application, one can prove that these elements can be non-trivial (see Sections~\ref{sec: num beilinson} and~\ref{sec: comparison}). For this computation, the explicit lift of the $3$-term relation given in Theorem \ref{triangulation manin3} is required. We implemented this computation in PARI/GP \cite{PARI} and the program \texttt{K4-reg-num.gp} is available in the GitHub repository \cite{Bru22}.

We begin by recalling in Section \ref{subsec: reg} Goncharov's and De Jeu's theory of regulator maps \cite{Gon02, Jeu00} in our case of interest, and the link with Beilinson's regulator map. In Section~\ref{subsec: convergence}, we discuss in detail the convergence of Goncharov's integrals. Finally, we explain in Sections~\ref{subsec: mellin} and \ref{subsec modular reg} how to compute these integrals in the case of modular curves, using generalised Mellin transforms.

\subsection{Regulator on the polylogarithmic complex} \label{subsec: reg}

Goncharov has defined completely explicit regulator maps for complex algebraic varieties at the level of his polylogarithmic complexes \cite{Gon02}. We will use these regulator formulas in the case of curves.

We keep the same setting as in Section \ref{subsec: de jeu curves}: $X$ is a smooth projective geometrically connected curve defined over a number field $k$, and $F=k(X)$ is its function field. Let $Y = X \setminus S$, where $S$ is a finite set of closed points of $X$. We will write $Y(\C) = Y \times_{\Q} \C$ for the complex points of $Y$ (note that the Riemann surface $Y(\C)$ is not necessarily connected). It follows from \cite[Theorem 2.2]{Gon02} that we have a regulator map
\begin{equation*}
r_3(2) \colon H^2(\Gamma(Y,3)) \to H^1(Y(\C),\R(2))^+,
\end{equation*}
where $(\cdot)^+$ denotes the invariants with respect to complex conjugation acting on the second factor of $Y \times_{\Q} \C$. The map $r_3(2)$ is defined by Goncharov at the level of cochains by means of explicit differential forms, see \eqref{eq r32} for the precise formula.

It is expected that the regulator maps defined by Beilinson and Goncharov are compatible by means of De Jeu's map \eqref{eq De Jeu map K4}. More precisely, there should be a diagram, commuting up to sign,
\begin{equation} \label{eq reg diagram}
\begin{tikzcd}
H^2(\Gamma(Y,3)) \arrow{d} \arrow["r_3(2)", dr] & \\
K_4^{(3)}(Y) \arrow{r}{\frac12 r_B} & H^1(Y(\C),\R(2))^+,
\end{tikzcd}
\end{equation}
where $r_B$ is Beilinson's regulator map. The factor $\frac12$ comes from the comparison of \cite[Theorem 3.3]{Gon96}, where $c_3 = \frac43$, and \cite[(3.1)]{Jeu00}, where the rational factor is $\pm \frac83$.

De Jeu has proved a slightly weaker version of \eqref{eq reg diagram}, where the group $K_4^{(3)}(Y)$ is replaced by $K_4^{(3)}(Y)+(K_3^{(2)}(k) \cup F^\times)$, viewed inside $K_4^{(3)}(F)$, and $H^1(Y(\C),\R(2))^+$ is replaced by $\Hom_{\C}(\Omega^1(X(\C)), \C)$, see Theorem 2, Theorem 3.5 and Remark 3.7 in \cite{Jeu00}. This proves the existence and commutativity up to sign of \eqref{eq reg diagram} when $Y = X$ and $k$ is totally real \cite[Theorem 5.4]{Jeu00}.

Since our aim is to provide numerical evidence for Beilinson's conjecture and for Conjecture \ref{main conj}, we will not attempt to establish \eqref{eq reg diagram} rigorously in general.

\begin{remark}
The regulator map $r_3(2)$ can be used with $X=X(N)$ and $k=\Q(\zeta_N)$, or with $X=X_1(N)$ and $k=\Q$, the set $S$ being the set of cusps. However, the map $r_3(2)$ is defined for any Riemann surface $Y = X \setminus S$, where $X$ is compact connected and $S$ is a finite set of points. In the following Sections \ref{subsec: convergence}--\ref{subsec modular reg}, we will use this setting with the modular curve $Y = \Gamma \backslash \h$, where $\Gamma$ is a congruence subgroup of $\SL_2(\Z)$. Note that the map $\nu$ from Section~\ref{sec: uabcd} identifies $\Gamma(N) \backslash \h$ with a connected component of $Y(N)(\C)$, and the two regulator maps $r_3(2)$ are compatible via $\nu$. Therefore, we will be able to apply the results from these sections to the elements $\xi(\bolda, \boldb)$ and $\xi_1(a,b)$.
\end{remark}

\subsection{Convergence of Goncharov's integrals} \label{subsec: convergence}

We show in this section that Goncharov's regulator integrals are absolutely convergent in the case of Riemann surfaces (Proposition~\ref{conv int rn2}), and we apply this to modular curves (Corollary \ref{conv int rn2 modular}).

Let $X$ be a compact connected Riemann surface, and let $F$ be the field of meromorphic functions on $X$. Let $n \geqslant 3$. According to Goncharov's Conjecture \ref{Goncharov polylog conj}, the motivic cohomology group $H^2_\M(F,\Q(n)) \cong K_{2n-2}^{(n)}(F)$ is isomorphic to a certain subquotient of $B_{n-1}(F) \otimes F^\times_\Q$. Goncharov constructs in \cite[Theorem 2.2]{Gon02} a regulator map
\begin{equation*}
r_n(2) \colon B_{n-1}(F) \otimes F^\times_\Q \to \mathcal{A}^1(\eta_X)(n-1)
\end{equation*}
where $\mathcal{A}^1(\eta_X)(n-1)$ is the space of $(2\pi i)^{n-1} \R$-valued differential $1$-forms on $X$ which are regular outside a finite subset of $X$. Concretely, for $f \in F \backslash \{0,1\}$ and $g \in F^\times$, we have
\begin{equation} \label{def rn2}
\begin{split}
r_n(2)(\{f\}_{n-1} \otimes g) & = i \LL_{n-1}(f) \darg g - \frac{2^{n-1} \textbf{B}_{n-1}}{(n-1)!} \alpha(1-f,f) \cdot \log^{n-3} |f| \log |g| \\
& \qquad - \sum_{k=2}^{n-2} \frac{2^{k} \textbf{B}_{k}}{k!} \LL_{n-k}(f) \log^{k-2} |f| \dlog |f| \cdot \log |g|,
\end{split}
\end{equation}
where $\LL_m \colon \PP^1(\C) \to (2\pi i)^{m-1} \R$ is the single-valued polylogarithm defined in \cite[Section~2.1]{Gon02}\footnote{There is a misprint in \cite[Section 2.1]{Gon02}: $\log^{n-k} |z|$ should be replaced by $\log^k |z|$ in the definition of $\widehat{\mathcal{L}}_n(z)$.}, $\textbf{B}_k$ is the $k$th Bernoulli number, and
\begin{equation*}
\alpha(f,g) = - \log |f| \dlog |g| + \log |g| \dlog |f|.
\end{equation*}
In particular $\LL_2 = i D$, where $D \colon \PP^1(\C) \to \R$ is the Bloch-Wigner dilogarithm \cite[Section~2]{Zag91}. For $m \geqslant 2$, the function $\LL_m$ is real-analytic outside $\{0,1,\infty\}$ and is continuous on $\PP^1(\C)$ with $\LL_m(0)=\LL_m(\infty)=0$. It follows that the 1-form $r_n(2)(\{f\}_{n-1} \otimes g)$ is defined and real-analytic outside the set of zeros and poles of $f$, $1-f$ and $g$ in $X$.

For $n=3$, we have explicitly
\begin{equation} \label{eq r32}
r_3(2)(\{f\}_2 \otimes g) = -D(f) \darg g - \frac{1}{3} \alpha(1-f,f) \cdot \log |g|.
\end{equation}
This defines the map $r_3(2)$ in the diagram \eqref{eq reg diagram}.

We will use the big $O$ notation $f(x)=O(g(x))$ as $x \to x_0$ to indicate that there exists a constant $C>0$ such that $|f(x)| \leqslant C |g(x)|$ for all $x$ sufficiently close to $x_0$.

\begin{lemma} \label{estimate alpha}
Let $f, g$ be non-zero meromorphic functions on $X$. Let $z=re^{i\theta}$ be a local coordinate at a point $p \in X$. As $z \to 0$, we have
\begin{equation*}
\alpha(f,g) = \Bigl( -\frac{\log |\partial_p(f,g)|}{r} + O(\log r) \Bigr) dr + O(r \log r) d\theta
\end{equation*}
where $\partial_p(f,g) = (-1)^{\ord_p(f)\ord_p(g)} \bigl(f^{\ord_p(g)}/g^{\ord_p(f)}\bigr)(p)$ is the tame symbol of $f,g$ at $p$.
\end{lemma}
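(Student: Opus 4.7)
The plan is to work in a local coordinate $z=re^{i\theta}$ at $p$ and expand each piece of $\alpha(f,g)$ separately. Setting $m=\ord_p(f)$ and $n=\ord_p(g)$, write $f=z^m u$ and $g=z^n v$ with $u,v$ holomorphic and non-vanishing near $p$. Then
\[
\log|f| = m\log r + \log|u|, \qquad \log|g| = n\log r + \log|v|,
\]
and the differentials $d\log|u|$, $d\log|v|$ are smooth $1$-forms on a neighbourhood of $p$. The key structural fact I would invoke is that any smooth $1$-form near the origin expressed in polar coordinates has the shape $A(r,\theta)\,dr + r B(r,\theta)\,d\theta$ with $A,B$ bounded; this is what forces the $d\theta$-coefficient in the final answer to acquire an extra factor of $r$.

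Substituting into $\alpha(f,g) = -\log|f|\,d\log|g| + \log|g|\,d\log|f|$, the most singular contributions are of the form $\pm mn\,\frac{\log r}{r}\,dr$ and cancel exactly. The next-to-leading $dr/r$ piece is $\frac{m\log|v| - n\log|u|}{r}\,dr$. Using $\log|u| = \log|u(0)| + O(r)$ and the analogous expansion for $\log|v|$, this splits as $\frac{m\log|v(0)| - n\log|u(0)|}{r}\,dr + O(1)\,dr$. From the definition of the tame symbol one computes $\partial_p(f,g) = (-1)^{mn} u(0)^n/v(0)^m$, so $m\log|v(0)| - n\log|u(0)| = -\log|\partial_p(f,g)|$, producing exactly the announced leading term.

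Finally, the mixed terms $\pm \log r \cdot d\log|u|$ and $\pm \log r \cdot d\log|v|$ contribute $O(\log r)\,dr + O(r\log r)\,d\theta$ thanks to the polar-coordinate structure above, and the remaining smooth products $-\log|u|\,d\log|v| + \log|v|\,d\log|u|$ contribute $O(1)\,dr + O(r)\,d\theta$. Assembling these gives the stated estimate. The one place requiring care is the bookkeeping of the cancellation: the naïve substitution produces $\log r/r$ singularities, and only their complete cancellation yields the refined $1/r$ bound with coefficient $-\log|\partial_p(f,g)|$ rather than a spurious $\log r$. Beyond that, everything reduces to elementary Taylor expansions in polar coordinates, so no further obstacle is anticipated.
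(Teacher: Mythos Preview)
Your proof is correct and follows essentially the same route as the paper's: expand $f=z^m u$, $g=z^n v$ in the local coordinate, observe the cancellation of the $\pm mn\,\tfrac{\log r}{r}\,dr$ terms, and identify the surviving $1/r$ coefficient with $-\log|\partial_p(f,g)|$ via $u(0)^n/v(0)^m$. The only cosmetic difference is that the paper first expands the holomorphic form $\dlog f=(m+O(z))\,dz/z$ and reads off the polar estimates for $\dlog|f|$ and $\darg f$ from its real and imaginary parts, whereas you work with $\log|f|$ directly and invoke the general polar-coordinate shape of a smooth $1$-form; both arguments yield the same bounds.
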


\begin{proof}
Write $f(z) = a z^m + O(z^{m+1})$ and $g(z) = b z^n + O(z^{n+1})$ with $a,b \in \C^\times$ and $m,n \in \Z$. A direct computation gives
\begin{equation*}
\dlog f = (m+O(z)) \frac{dz}{z} = (m + O(z)) \frac{dr}{r} + (im + O(z)) d\theta.
\end{equation*}
Taking the real and imaginary parts, we get
\begin{align}
\nonumber \dlog |f| & = \Bigl(\frac{m}{r} + O(1)\Bigr) dr +O(r) d\theta, \\
\label{estimate darg f} \darg f & = O(1) dr + (m+O(r)) d\theta.
\end{align}
On the other hand $\log |f| = \log |a| + m\log r +O(r)$. Putting things together, we arrive at
\begin{equation*}
\alpha(f,g) = \Bigl( -\frac{1}{r} \log \Bigl | \frac{a^n}{b^m} \Bigr | + O(\log r) \Bigl) dr + O(r\log r) d\theta. \qedhere
\end{equation*}
\end{proof}

\begin{proposition} \label{conv int rn2}
Let $f \in F \backslash \{0,1\}$ and $g \in F^\times$. Let $S$ be the set of zeros and poles of the functions $f$, $1-f$ and $g$ in $X$. Let $\gamma \colon [0,1] \to X$ be a $C^\infty$ path such that
\begin{enumerate}
\item The path $\gamma$ avoids $S$ except possibly at the endpoints;
\item If $p \in S$ is an endpoint of $\gamma$, then the argument of $\gamma(t)$ with respect to a local coordinate at $p$ is of bounded variation as $\gamma(t)$ approaches $p$.
\end{enumerate}
Then for every $n \geqslant 3$, the integral $\int_\gamma r_n(2)(\{f\}_{n-1} \otimes g)$ converges absolutely.
\end{proposition}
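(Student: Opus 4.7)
The plan is to reduce to a purely local analysis near each endpoint $p\in S$ of $\gamma$: since $\omega := r_n(2)(\{f\}_{n-1}\otimes g)$ is real-analytic on $X\setminus S$, only neighbourhoods of such endpoints can pose a problem. Fix such a $p$ and a local coordinate $z=re^{i\theta}$ centred at $p$. The proof will rest on three local estimates: (i) for $h\in\{f,1-f,g\}$, the asymptotics $\log|h|=O(\log r)$, $\dlog|h|=O(1/r)\,dr+O(r)\,d\theta$, $\darg h=O(1)\,dr+O(1)\,d\theta$, obtained as in the proof of Lemma~\ref{estimate alpha}; (ii) the continuity of $\LL_m$ on $\PP^1(\C)$ with $\LL_m(0)=\LL_m(\infty)=0$, together with Zagier's explicit formula, which yields near a zero or pole of $f$ of order $m_f$ the bound $\LL_m(f)=O(r^{|m_f|}|\log r|^{m-1})$; and crucially (iii) the Steinberg identity $\partial_p(f,1-f)=1$, which via Lemma~\ref{estimate alpha} forces
\[
\alpha(1-f,f)=O(\log r)\,dr+O(r\log r)\,d\theta,
\]
killing the potentially singular $1/r$ leading term.

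Granting (i)--(iii), I would bound the three groups of summands in \eqref{def rn2} term by term. The summand $\LL_{n-1}(f)\darg g$ clearly has bounded coefficients by (i)--(ii). For the Bernoulli summand $\alpha(1-f,f)\log^{n-3}|f|\log|g|$, combining (i) and (iii) yields a $dr$-coefficient $O(\log^{n-1} r)$ and a $d\theta$-coefficient $O(r\log^{n-1} r)$. For each term $\LL_{n-k}(f)\log^{k-2}|f|\dlog|f|\log|g|$ in the sum, the $1/r$ singularity of $\dlog|f|$ at a zero or pole of $f$ is absorbed by $\LL_{n-k}(f)=O(r^{|m_f|}|\log r|^{n-k-1})$ from (ii), giving a $dr$-coefficient $O(r^{|m_f|-1}\log^{n-2} r)$ and a $d\theta$-coefficient $O(r^{|m_f|+1}\log^{n-2} r)$. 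All these bounds are integrable in $r$ near $r=0$; when $f$ is regular and nonvanishing at $p$, the estimate is trivial.

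To convert these pointwise bounds into absolute integrability along $\gamma$, write $\gamma^*\omega=(A\,r'(t)+B\,\theta'(t))\,dt$ near the time $t_0$ at which $\gamma$ reaches $p$. Since $\gamma$ is $C^\infty$, the radial function $r(t)=|z(\gamma(t))|$ is continuous on $[0,t_0]$ and $C^\infty$ with bounded derivative on $[0,t_0)$, so $\int|r'(t)|\,dt<\infty$, and after a substitution
\[
\int|A\,r'|\,dt \leq C\int_0^{r_0}\sup_{\theta}|A(r,\theta)|\,dr<\infty
\]
by the estimates above. For the angular component, assumption (b) yields $\int|\theta'(t)|\,dt<\infty$, and $B$ is bounded near $p$, hence $\int|B\,\theta'|\,dt<\infty$. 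Summing the contributions from the finitely many endpoints of $\gamma$ lying in $S$ yields absolute convergence.

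The main obstacle is that both the Bernoulli summand and the generic summands of the sum have naive $1/r$ singularities in the $dr$-direction, each removed by a distinct mechanism: the Bernoulli summand relies on the Steinberg cancellation $\partial_p(1-f,f)=1$, while the others rely on the vanishing of $\LL_{n-k}$ at $0$ and $\infty$. I expect the main technical care to lie in making Zagier's bounds on $\LL_m$ near $\infty$ uniform enough (via the functional equation), and in handling paths whose radial coordinate $r(t)$ is not monotone near $t_0$, where the naive substitution above must be replaced by a BV change of variable.
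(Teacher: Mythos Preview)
Your proposal follows essentially the same route as the paper's proof: reduce to a local analysis at an endpoint $p\in S$, use assumption~(b) together with $dr=e^{-i\theta}\,dz-ir\,d\theta$ to see that both $dr$ and $d\theta$ are absolutely integrable along $\gamma$, and then bound the three pieces of \eqref{def rn2} separately via the estimates of Lemma~\ref{estimate alpha}, the Steinberg identity $\partial_p(1-f,f)=1$, and the asymptotics of $\LL_m$ near $0$ and $\infty$. The paper simply asserts that the resulting ``at worst logarithmic singularities'' are integrable, whereas you try to justify the passage from pointwise bounds in $(r,\theta)$ to integrability along $\gamma$ more carefully; your acknowledgement that the naive substitution $\int|A\,r'|\,dt\leq C\int_0^{r_0}\sup_\theta|A|\,dr$ requires a BV argument when $r(t)$ is not monotone is a point the paper leaves implicit.
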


\begin{proof}
As noted above, the integrand is $C^\infty$ outside $S$. We are going to show the convergence of the integral at the endpoint $t=0$ (the case $t=1$ is identical). Let $z=re^{i\theta}$ be a local coordinate at $p=\gamma(0)$. Assumption (2) means that the form $d\theta$ is (absolutely) integrable along $\gamma$ near $t=0$. Moreover $dr = e^{-i\theta} dz  - i rd\theta$, so that $dr$ is also integrable along $\gamma$. Using~\eqref{estimate darg f}, we deduce that $\darg g$ and the first term of \eqref{def rn2} are integrable. Regarding the second term, Lemma \ref{estimate alpha} and the fact that $\partial_p(1-f,f)=1$ give
\begin{equation*}
\alpha(1-f,f) = O(\log r) dr + O(r\log r) d\theta.
\end{equation*}
It follows that the second term in \eqref{def rn2} has at worst logarithmic singularities, hence is integrable. Finally, the integrability of the third term in \eqref{def rn2} can be proved similarly, noting that $\LL_m(z) = O(|z| \log^{m-1} |z|)$ as $z \to 0$ for any $m \geqslant 2$, and using the functional equation $\LL_m(1/z) = (-1)^{m-1} \LL_m(z)$ to get the asymptotic behaviour as $z \to \infty$.
\end{proof}

In the case of modular curves, this has the following consequence. For any two cusps $\alpha_1 \neq \alpha_2$ in $\PP^1(\Q)$, we denote by $\{\alpha_1 \to \alpha_2\}$ the hyperbolic geodesic from $\alpha_1$ to $\alpha_2$ in $\h$.

\begin{corollary} \label{conv int rn2 modular}
Let $\Gamma$ be a congruence subgroup of $\SL_2(\Z)$. Let $X = \Gamma \backslash (\h \cup \PP^1(\Q))$ be the compactification of $\Gamma \backslash \h$. Let $u,v$ be modular units for $\Gamma$ such that $1-u$ is also a modular unit. Then for any $n \geqslant 3$ and any two cusps $\alpha_1 \neq \alpha_2$ in $\PP^1(\Q)$, the integral of $r_n(2)(\{u\}_{n-1} \otimes v)$ along $\{\alpha_1 \to \alpha_2\}$ converges absolutely.
\end{corollary}

Proposition \ref{conv int rn2} also holds for $n=2$; in fact in this case we don't need to include the function $1-f$ in the definition of $S$. This follows from a similar computation, the regulator being defined by $r_2(2) (f \wedge g) = -i (\log |f| \darg g - \log |g| \darg f)$. As a consequence, Corollary~\ref{conv int rn2 modular} also holds in the case $n=2$, without assuming that $1-u$ is a modular unit.

\subsection{Generalised Mellin transforms} \label{subsec: mellin}

We wish to compute numerically the integral
\begin{equation} \label{eq int rn2uv}
\int_{\alpha_1}^{\alpha_2} r_n(2)(\{u\}_{n-1} \otimes v)
\end{equation}
from Corollary \ref{conv int rn2 modular}. We work in Sections \ref{subsec: mellin}--\ref{subsec modular reg} with the group $\Gamma = \Gamma(N)$. Let $X = \Gamma(N) \backslash (\h \cup \PP^1(\Q))$ be the compactification of $\Gamma(N) \backslash \h$. By Manin's theorem \cite{Man72}, the first homology group of $X$ relative to the cusps is generated by the modular symbols $\{h0 \to h\infty\}$ with $h \in \SL_2(\Z)$. In this section, we will only compute \eqref{eq int rn2uv} for such paths. The general case requires some more care, due to the fact that the integrand may have non-trivial residues at the cusps, so that the formula $\int_{\alpha_1}^{\alpha_2} + \int_{\alpha_2}^{\alpha_3} = \int_{\alpha_1}^{\alpha_3}$ does not always hold.

Let $u,v$ be modular units for $\Gamma(N)$ such that $1-u$ is also a modular unit, and let $n \geqslant 3$. Note that
\begin{equation} \label{eq int rn2 0 infty}
\int_{h 0}^{h \infty} r_n(2)(\{u\}_{n-1} \otimes v) = \int_0^\infty r_n(2)(\{u \circ h\}_{n-1} \otimes (v \circ h)),
\end{equation}
and the functions $u \circ h$ and $v \circ h$ are also modular units for $\Gamma(N)$. We are thus reduced to the case $h$ is the identity matrix $I_2$, so that we integrate from $0$ to $\infty$. In this case, we restrict the $1$-form $r_n(2)(\{u\}_{n-1} \otimes v)$ to the imaginary axis in $\h$, and write
\begin{equation} \label{eq int phi}
\int_0^\infty r_n(2)(\{u\}_{n-1} \otimes v) = \int_0^\infty \phi(y) dy,
\end{equation}
where $\phi \colon ]0,+\infty[ \to \C$ is a $C^\infty$ function. We have seen in Corollary \ref{conv int rn2 modular} that $\phi$ is absolutely integrable. We are going to demonstrate that such an integral can be computed rapidly for a specific class of functions. We will show later, in Section \ref{subsec modular reg}, that the function $\phi$ in \eqref{eq int phi} belongs to this class.

\begin{definition}
Let $\mathcal{P}$ be the class of $C^\infty$ functions $\phi \colon ]0,+\infty[ \to \C$ such that
\begin{equation} \label{eq class P}
\phi(y) = \sum_{j=0}^{j_\infty} y^j \sum_{n = 0}^{\infty} a_n^{(j)} e^{-2\pi ny/N}, \quad \textrm{and} \quad \phi\Big(\frac{1}{y}\Bigr) = \sum_{j=0}^{j_0} y^j \sum_{n = 0}^{\infty} b_n^{(j)} e^{-2\pi ny/N},
\end{equation}
for some integers $j_\infty, j_0 \geqslant 0$, and where the sequences $(a_n^{(j)})_{n \geqslant 0}$ and $(b_n^{(j)})_{n \geqslant 0}$ have polynomial growth as $n \to \infty$.
\end{definition}

By considering the asymptotic expansion, it is easy to see that the coefficients $a_n^{(j)}$ and $b_n^{(j)}$ are uniquely determined by $\phi$. Moreover, the function $\phi$ is absolutely integrable on $]0,+\infty[$ if and only if $a_0^{(j)}=0$ for $j \geqslant 0$ and $b_0^{(j)}=0$ for $j \geqslant 1$.

We recall briefly the definition of Mellin transform (see \cite[Section 3.4]{Car92} for details). The (generalised) Mellin transform $\mathcal{M}(\phi,s)$ of a function $\phi \in \mathcal{P}$ is defined by
\begin{align*}
\mathcal{M}(\phi,s) = \int_0^\infty \phi(y) y^s \frac{dy}{y} & := \textrm{a.c.} \Bigl(\int_1^\infty \phi(y) y^s \frac{dy}{y}\Bigr) + \textrm{a.c.} \Bigl(\int_0^1 \phi(y) y^s \frac{dy}{y}\Bigr) \\
& = \textrm{a.c.} \Bigl(\int_1^\infty \phi(y) y^s \frac{dy}{y}\Bigr) + \textrm{a.c.} \Bigl(\int_1^\infty \phi\Bigl(\frac{1}{y}\Bigr) y^{-s} \frac{dy}{y}\Bigr),
\end{align*}
where $s \in \C$ and ``a.c.'' means analytic continuation with respect to $s$. Note that the first integral converges for $\Re(s) \ll 0$ while the second integral converges for $\Re(s) \gg 0$; both have a meromorphic continuation to $s \in \C$ thanks to \eqref{eq class P}.

In the case $\phi$ is absolutely integrable, the integral $\int_0^\infty \phi = \mathcal{M}(\phi,1)$ can be computed by integrating term by term the expansions \eqref{eq class P}. This yields the following series with exponential decay:
\begin{equation} \label{eq mellin phi}
\int_0^\infty \phi = \sum_{n = 1}^\infty \sum_{j=0}^{j_\infty} a_n^{(j)} \Bigl(\frac{N}{2\pi n}\Bigr)^{j+1} \Gamma\Bigl(j+1, \frac{2\pi n}{N}\Bigr) + \sum_{n = 1}^\infty \sum_{j=0}^{j_0} b_n^{(j)} \Bigl(\frac{N}{2\pi n}\Bigr)^{j-1} \Gamma\Bigl(j-1, \frac{2\pi n}{N}\Bigr) + b_0^{(0)},
\end{equation}
where $\Gamma(s,x) = \int_x^\infty e^{-y} y^{s-1} dy$ is the incomplete gamma function, which can be computed efficiently in PARI/GP. So the integral $\int_0^\infty \phi$ can be computed accurately, provided sufficiently many coefficients $a_n^{(j)}$ and $b_n^{(j)}$ are known.

The class $\mathcal{P}$ is a $\C$-algebra stable under the differentiation $\frac{d}{dy}$. However it is not stable under taking primitive: for example, the function $\phi = 1$ does not have a primitive in $\mathcal{P}$. In fact, a function $\phi \in \mathcal{P}$ has a primitive in $\mathcal{P}$ if and only if $b_n^{(0)} = b_n^{(1)} = 0$ for all $n \geqslant 0$. This criterion shows that the image $\mathcal{P}'$ of the operator $\frac{d}{dy} \colon \mathcal{P} \to \mathcal{P}$ is an ideal of $\mathcal{P}$.

\subsection{The modular case} \label{subsec modular reg}

We show in this section that in the case of modular curves, the regulator integrals defined by Goncharov belong to $\mathcal{P}$, and we explain how to compute them. We say that a function $\Phi \colon \h \to \C$ is in $\mathcal{P}$ if the function $y \mapsto \Phi(iy)$ is in $\mathcal{P}$.

\begin{lemma} \label{log u in P}
For any modular unit $u$ for $\Gamma(N)$, we have $\log u \in \mathcal{P}$, were $\log u$ is any determination of the logarithm of $u$ on $\h$. In particular $\log |u| \in \mathcal{P}$, and the forms $\dlog |u|$ and $\darg u$ belong to $d\mathcal{P}$.
\end{lemma}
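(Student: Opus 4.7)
The strategy is to reduce to the explicit product formula for Siegel units and then transfer between the cusps $\infty$ and $0$ using normality of $\Gamma(N)$ in $\SL_2(\Z)$. Choose a continuous branch on $(0,+\infty)$ and set $\phi(y) = \log u(iy)$; this is well-defined since $u$ has no zeros or poles on $\h$.

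By Kubert--Lang, there exists an integer $M \geq 1$ such that $u^M$ coincides, up to a non-zero constant, with a product of Siegel units $g_a$, $a \in (\Z/N\Z)^2 \setminus \{(0,0)\}$. Since $\mathcal{P}$ is a $\C$-vector space and $\log(u^M) = M \log u$ for coherent branches, it is enough to prove the lemma when $u = g_a$. Plugging $\tau = iy$ into the product formulas \eqref{eq gamma prod}, \eqref{eq E gamma} and expanding each factor as $\log(1 - e^{-2\pi \alpha y}\zeta_N^{\beta}) = -\sum_{k \geq 1} e^{-2\pi k \alpha y}\zeta_N^{k\beta}/k$, one obtains
\begin{equation*}
\phi(y) = A y + B + \sum_{n \geq 1} c_n e^{-2\pi n y /N},
\end{equation*}
with $A, B \in \C$ and $|c_n| = O(\sigma_0(n))$, which is polynomial growth. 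This is the first expansion of Definition \ref{def class P} with $j_\infty = 1$.

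For the expansion at $0$, the essential point is that $\Gamma(N)$ is normal in $\SL_2(\Z)$, so $\tilde{u} := u \mid \sigma$ with $\sigma = \sabcd{0}{-1}{1}{0}$ is again a modular unit for $\Gamma(N)$. Since $u$ has weight $0$ and $-1/(iy) = i/y$, we have $u(i/y) = \tilde{u}(iy)$, so $\phi(1/y) = \log \tilde{u}(iy) + c$ for some constant $c \in \C$ absorbing the branch ambiguity. Applying the previous paragraph to $\tilde{u}$ yields an expansion of $\phi(1/y)$ as $y \to \infty$ matching the second condition of Definition \ref{def class P} with $j_0 = 1$. Hence $\log u \in \mathcal{P}$.

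Finally, $\mathcal{P}$ is closed under complex conjugation of its coefficients, so $\log |u| = \Re\, \log u$ and $\arg u = \Im\, \log u$ both belong to $\mathcal{P}$. Pulled back to the imaginary axis, the $1$-forms $\dlog |u|$ and $\darg u$ become $(d/dy)\log|u(iy)|\, dy$ and $(d/dy)\arg u(iy)\, dy$ respectively, and both lie in $\mathcal{P}' \, dy = d\mathcal{P}$ by the very definition of $\mathcal{P}'$. The most delicate step is the branch-choice bookkeeping when relating $\phi(1/y)$ to $\log \tilde{u}(iy)$; the polynomial growth of the Fourier coefficients is automatic from the double series expansion.
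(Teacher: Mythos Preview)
Your proof is correct and follows essentially the same route as the paper: reduce to Siegel units via the Kubert--Lang structure theorem, read off the expansion at $\infty$ from the infinite product \eqref{eq gamma prod}--\eqref{eq E gamma}, and obtain the expansion at $0$ by applying $\tau\mapsto -1/\tau$ and using that the transformed function is again a modular unit for $\Gamma(N)$. The paper's proof is more terse (it simply notes that $g_a(-1/\tau)$ is a root of a modular unit), but the underlying argument is the same; your version spells out the polynomial growth of the coefficients and the derivation of the ``in particular'' clause, which the paper leaves implicit.
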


\begin{proof}
Since the group $\mathcal{O}(Y(N))^\times_\Q/\Q(\zeta_N)^\times_\Q$ is generated by the Siegel units, it suffices to prove the result for them. For the asymptotic expansion of $\log g_\boldx(iy)$ as $y \to +\infty$, this follows from taking the logarithm of \eqref{eq def gx} and expanding as a power series in $e^{-2\pi y/N}$. The expansion as $y \to 0$ also has the correct shape because of the relation $g_\boldx \circ \sigma = g_{\boldx \sigma}$ in $\mathcal{O}(Y(N))^\times_\Q$ for the matrix $\sigma = \sabcd{0}{-1}{1}{0}$ (see the formula before Proposition \ref{pro uabcd transform}).
\end{proof}

\begin{proposition} \label{Ln u in P}
Let $u$ be a modular unit for $\Gamma(N)$ such that $1-u$ is also a modular unit. For every $n \geqslant 2$, we have $\LL_n(u) \in \mathcal{P}$.
\end{proposition}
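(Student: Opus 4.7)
The plan is to show that $\phi(y) := \LL_n(u(iy))$ admits asymptotic expansions of the required form at both $y\to\infty$ and $y\to 0$. The expansion at $y\to 0$ reduces to the one at $y\to\infty$: since $\Gamma(N)$ is normal in $\SL_2(\Z)$, the function $u|S$ with $S=\sabcd{0}{-1}{1}{0}$ is again a modular unit for $\Gamma(N)$, and so is $(1-u)|S=1-(u|S)$. It therefore suffices to treat $y\to\infty$ for an arbitrary pair $(u,1-u)$ of modular units.

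For $y\to\infty$, I would consider the value $c:=\lim_{y\to\infty} u(iy)\in\PP^1(\C)$. Since both $u$ and $1-u$ are modular units, either $c\in\{0,1,\infty\}$, or $c$ lies in $\C\setminus\{0,1\}$. In the latter case $\LL_n$ is real-analytic in a neighbourhood of $c$, so composing with the $q^{1/N}$-expansion of $u$ at infinity produces a convergent series for $\phi(y)$ in $e^{-2\pi y/N}$, which is of the required form with $j_\infty=0$.

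In the remaining three cases I would reduce everything to the case $c=0$. For $c=\infty$ the functional equation $\LL_n(1/z)=(-1)^{n-1}\LL_n(z)$ lets me replace $u$ by the modular unit $1/u$. For $c=1$ I replace $u$ by the modular unit $1-u$; using the inversion formula expressing $\mathrm{Li}_n(1-w)$ in terms of $\mathrm{Li}_n(w)$, $\log w$, and zeta values, I verify that $\LL_n(1-w)$ near $w=0$ has the same polynomial-in-$\log|w|$ shape as $\LL_n(w)$. Once reduced to $c=0$, I write $u(iy)=e^{-2\pi ky/N}(c_0+O(e^{-2\pi y/N}))$ with $k=\mathrm{ord}_\infty(u)>0$, so that $\log|u(iy)|$ is linear in $y$ modulo exponentially small terms. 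Substituting into
\begin{equation*}
\LL_n(z) = \Re_n\Bigl(\sum_{j=0}^{n-1} \frac{2^j B_j}{j!}(\log|z|)^j\,\mathrm{Li}_{n-j}(z)\Bigr)
\end{equation*}
and expanding $\mathrm{Li}_{n-j}(u(iy))$ and its complex conjugate as convergent series in $e^{-2\pi y/N}$ (noting that $\overline{q^{m/N}}=q^{m/N}$ along the imaginary axis) then yields an expansion $\phi(y)=\sum_{j=0}^{n-1}y^j\sum_{m\geq 0}a_m^{(j)}e^{-2\pi my/N}$ of the required form. The polynomial growth of the coefficients follows since Fourier coefficients of modular units grow polynomially, and convolutions preserve this property.

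The main obstacle is the reduction in the case $c=1$: showing that $\LL_n$ near $z=1$ admits a polynomial-in-$\log|1-z|$ expansion whose coefficients are real-analytic in $(1-z,\overline{1-z})$. After applying the inversion formula for $\mathrm{Li}_n$ near $z=1$ and taking $\Re_n$ in the defining formula for $\LL_n$, the various $\log z\cdot(\ldots)$ terms must recombine so that continuity of $\LL_n$ at $1$ is preserved and no divergent contribution survives. Modulo this classical but non-trivial analytic check, everything else reduces to a direct manipulation of $q$-expansions.
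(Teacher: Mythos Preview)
Your approach is genuinely different from the paper's, and the gap you yourself flag at $c=1$ is real. The paper does not attempt a direct asymptotic analysis of $\LL_n(u(iy))$ at all. Instead it proceeds by induction on $n$, using the differential identity
\[
d\LL_n(u) = r_n(2)(\{u\}_{n-1}\otimes u)
= i\LL_{n-1}(u)\,\darg u - \tfrac{2^{n-1}B_{n-1}}{(n-1)!}\,\alpha(1-u,u)\log^{n-2}|u| - \sum_{k=2}^{n-2}\tfrac{2^kB_k}{k!}\LL_{n-k}(u)\log^{k-1}|u|\,\dlog|u|.
\]
By Lemma~\ref{log u in P}, the functions $\log|u|$ and $\log|1-u|$ lie in $\mathcal P$ and $\darg u,\dlog|u|\in\mathcal P'\,dy$; by the inductive hypothesis $\LL_m(u)\in\mathcal P$ for $m<n$. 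Since $\mathcal P'$ is an ideal of $\mathcal P$, the right-hand side restricted to $\{0,\infty\}$ lies in $\mathcal P'\,dy$, so $\LL_n(u)\in\mathcal P$. The base case $n=2$ uses $dD(u)=\log|u|\,\darg(1-u)-\log|1-u|\,\darg u$. The point is that the troublesome behaviour of $\LL_n$ near $z=1$ is absorbed entirely into $\log|1-u|$, which Lemma~\ref{log u in P} handles because $1-u$ is itself a modular unit; no case distinction on the value of $u$ at the cusp is needed.

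Your direct route runs into trouble precisely because for $n\geq 3$ there is no clean one-variable functional equation expressing $\LL_n(1-w)$ in terms of $\LL_n(w)$; the relation $D(1-z)=-D(z)$ is special to $n=2$. The ``inversion formula'' for $\mathrm{Li}_n$ near $z=1$ brings in products of $\log z$ and $\log(1-z)$ together with lower polylogarithms, and checking that after applying $\Re_n$ all the branch ambiguities cancel to leave a polynomial in $\log|1-z|$ with real-analytic coefficients amounts to re-deriving the single-valuedness of $\LL_n$ near $1$ by hand. This can presumably be done, but it is exactly the work that the inductive argument via $d\LL_n$ packages for free. If you want to keep your direct approach, the cleanest fix for the $c=1$ case is still to differentiate: compute $\frac{d}{dy}\LL_n(u(iy))$ using the formula above and observe that every ingredient already lies in $\mathcal P$.
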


\begin{proof}
We prove this by complete induction on $n$. For $n=2$, we have $\LL_2 = i D$, where $D$ is the Bloch-Wigner dilogarithm. By definition,
\begin{equation*}
D(z) = \Im \Bigl( \sum_{n = 1}^\infty \frac{z^n}{n^2} \Bigr) + \log |z| \arg(1-z) \qquad (|z| < 1)
\end{equation*}
and a computation gives
\begin{equation*}
d D(u) = \log |u| \darg (1-u) - \log |1-u| \darg (u).
\end{equation*}
From Lemma \ref{log u in P}, it follows that $dD(u) \in d\mathcal{P}$, hence $D(u) \in \mathcal{P}$.

Now let $n \geqslant 3$. By the commutative diagram in \cite[Theorem 2.2]{Gon02}, we have
\begin{align*}
d\LL_n(u) & = r_n(2)(\{u\}_{n-1} \otimes u) \\
& = i \LL_{n-1}(u) \darg u - \frac{2^{n-1} \textbf{B}_{n-1}}{(n-1)!} \alpha(1-u,u) \cdot \log^{n-2} |u|\\
& \qquad - \sum_{k=2}^{n-2} \frac{2^{k} \textbf{B}_{k}}{k!} \LL_{n-k}(u) \log^{k-1} |u| \dlog |u|
\end{align*}
By the induction hypothesis $\LL_m(u)$ belongs to $\mathcal{P}$ for $m<n$. The result now follows from Lemma \ref{log u in P} and the fact that $\mathcal{P}'$ is an ideal of $\mathcal{P}$.
\end{proof}

The proof of Proposition \ref{Ln u in P} provides a way to compute the expansions \eqref{eq class P} of $\LL_n(u)$ inductively on $n$: we first compute the expansions of $d\LL_n(u)$ at $0$ and $\infty$, and then integrate the results. The constant of integration (in other words, the coefficient $a_0^{(0)}$ in the expansion) is determined by computing $\LL_n(u(\infty))$.

\begin{theorem}
Let $n \geqslant 3$. Let $u,v$ be two modular units for $\Gamma(N)$ such that $1-u$ is also a modular unit. Write
\begin{equation*}
r_n(2)(\{u\}_{n-1} \otimes v) |_{\{0,\infty\}} = \phi(y) dy.
\end{equation*}
Then $\phi$ belongs to $\mathcal{P}$.
\end{theorem}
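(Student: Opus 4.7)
The approach is to substitute $f = u$, $g = v$ into the explicit formula \eqref{def rn2} for $r_n(2)(\{u\}_{n-1} \otimes v)$, restrict each resulting term to the imaginary axis $\{iy : y > 0\}$, and read off that the coefficient of $dy$ belongs to $\mathcal{P}$. All the ingredients needed are available from Lemma \ref{log u in P} and Proposition \ref{Ln u in P}, together with the observation made just before this subsection that $\mathcal{P}$ is a $\C$-algebra and $\mathcal{P}' \subset \mathcal{P}$ is an ideal.

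By Proposition \ref{Ln u in P}, $\LL_m(u)$ restricts to an element of $\mathcal{P}$ for every $m \geq 2$. By Lemma \ref{log u in P}, the functions $\log|u|$, $\log|1-u|$, $\log|v|$ restrict to elements of $\mathcal{P}$, while the $1$-forms $\dlog|u|$, $\dlog|1-u|$, $\darg v$ restrict to $\mathcal{P}' \, dy$. Consequently any finite product of such factors that contains at least one factor from $\mathcal{P}' \, dy$ yields a $1$-form in $\mathcal{P}' \, dy \subset \mathcal{P} \, dy$, because $\mathcal{P}' $ is an ideal of $\mathcal{P}$.

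I would then go through the three families of summands appearing in \eqref{def rn2}. The first term $i \LL_{n-1}(u) \darg v$ is the product of $\LL_{n-1}(u) \in \mathcal{P}$ and $\darg v \in \mathcal{P}' \, dy$, hence lies in $\mathcal{P}' \, dy$ upon restriction. For the second term, writing $\alpha(1-u,u) = -\log|1-u| \dlog|u| + \log|u| \dlog|1-u|$ shows that $\alpha(1-u,u) \in \mathcal{P}' \, dy$, and multiplying by $\log^{n-3}|u| \cdot \log|v| \in \mathcal{P}$ (with the convention $\log^{0}|u| = 1$ when $n = 3$) keeps the product in $\mathcal{P}' \, dy$. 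The third sum is empty for $n = 3$ and, for $n \geq 4$, each summand is of the shape $\LL_{n-k}(u) \cdot \log^{k-2}|u| \cdot \log|v| \cdot \dlog|u|$, which again is an element of $\mathcal{P}$ times a $1$-form in $\mathcal{P}' \, dy$. Summing the three contributions yields $\phi(y) \, dy \in \mathcal{P}' \, dy$, so $\phi \in \mathcal{P}'\subset \mathcal{P}$.

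There is no real obstacle here: the essential analytic content is already packaged in Lemma \ref{log u in P} and Proposition \ref{Ln u in P}, and what remains is bookkeeping using the ring/ideal structure of $(\mathcal{P}, \mathcal{P}')$. The only point one has to be careful about is the interpretation of ``restriction to the imaginary axis'': a $1$-form on $\mathcal{H}$ is pulled back along $y \mapsto iy$, which kills the $dx$ component, and Lemma \ref{log u in P} is already phrased for exactly this restriction, so the products computed above genuinely land in $\mathcal{P}' \, dy$.
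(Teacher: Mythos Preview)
Your proposal is correct and follows exactly the same approach as the paper's proof, which is a one-line citation of \eqref{def rn2}, Lemma \ref{log u in P} and Proposition \ref{Ln u in P}; you have simply spelled out the bookkeeping in detail. Your observation that in fact $\phi \in \mathcal{P}'$ (not just $\mathcal{P}$) is a harmless sharpening that the paper does not state explicitly.
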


\begin{proof}
This follows from \eqref{def rn2}, Lemma \ref{log u in P} and Proposition \ref{Ln u in P}.
\end{proof}

The integral \eqref{eq int rn2 0 infty} can then be computed numerically in the case $u=u(\bolda, \boldb, \boldc, \boldd)$ and $v=g_{\boldsymbol e}$ as follows (note that $v$ is only a root of a modular unit, but the regulator integral still makes sense). With Proposition \ref{pro uabcd transform}, we reduce to the case $h=I_2$. Using Definition~\ref{def gx}, Proposition \ref{pro uabcd gx} and Proposition \ref{pro uabcd transform}, we compute the asymptotic expansions of $\log(u)$, $\log(1-u)$ and $\log(v)$ at $0$ and $\infty$, for some determination of these functions in $\h$. The formula \eqref{def rn2} and the proof of Proposition \ref{Ln u in P} provide, as explained above, a way to compute the asymptotic expansions of $r_n(2)(\{u\}_{n-1} \otimes v)$ at $0$ and $\infty$. We then obtain the associated regulator integral by \eqref{eq mellin phi}.

Finally, regarding the regulator of the elements $\xi(\bolda, \boldb)$, we fix the representative $\tilde{\xi}(\bolda, \boldb)$ as in Definition \ref{def tilde xi ab}, and consider the $1$-form $r_3(2)(\tilde{\xi}(\bolda, \boldb))$ on $\h$. Using the above method, we can compute the (absolutely convergent) integral of this $1$-form along the imaginary axis $\{0 \to \infty\}$, and more generally along any modular symbol $\{h0 \to h\infty\}$ with $h \in \SL_2(\Z)$. The same method works for the elements $\xi_1(a,b)$, using the representative \eqref{eq xi1tilde}.

This algorithm works for any weight $n \geqslant 2$, but we implemented it in PARI/GP only in the case $n=3$ and for the elements $\xi_1(a,b)$, see the script $\texttt{K4-reg-num.gp}$ in the GitHub repository \cite{Bru22}.

\section{Numerical verification of the Beilinson conjecture} \label{sec: num beilinson}

In this section, we check numerically Beilinson's conjecture on $L(E,3)$ for elliptic curves $E$ over $\Q$, where $L(E,s)$ is the $L$-function of $E$. 
This conjecture relates $L(E,3)$ to the Beilinson regulator map on the group $K_4^{(3)}(E)$ \cite[Section 4]{Jeu96}. Beilinson \cite{Bei86} has proved a weak form of his conjecture for modular elliptic curves by constructing elements in $K_4^{(3)}(E)$ using his theory of the Eisenstein symbol. Here, we do only a numerical comparison, but we use the elements $\xi_1(a,b)$ constructed in Section \ref{def xiab xi1ab}, which are of different nature than the Beilinson elements. Our computation implies that the elements $\xi_1(a,b)$ are generally non-zero. Let us mention that De Jeu \cite[Section~6]{Jeu96} also constructed explicit elements in $K_4^{(3)}(E)$ for a specific elliptic curve $E$ of conductor $20$, and checked numerically Beilinson's conjecture on $L(E,3)$ for this curve.

Our approach differs from \cite{Jeu96, Jeu00, DJZ06} in the following way. In  \cite{Jeu96, Jeu00}, the regulator $1$-form is integrated against a holomorphic $1$-form, which requires to compute a two-dimensional integral on $E(\C)$. In \cite{DJZ06}, the regulator $1$-form is integrated along $1$-cycles, which is computationally easier. In our method, we also compute a one-dimensional integral, but we take advantage of the fact that we have Fourier expansions at our disposal.

Let us recall the weak form of Beilinson's conjecture on $L(E,3)$. Let $E$ be an elliptic curve of conductor $N$. Let $r_B : K_4^{(3)}(E) \to H^1(E(\C), \R(2))^+$ denote the Beilinson regulator map, where $(\cdot)^+$ is the subspace of invariants under complex conjugation. Let $\gamma_E$ be a generator of $H_1(E(\C), \Q)^+$. Then Beilinson's conjecture asserts that there exists $\xi_E \in K_4^{(3)}(E)$ such that
\begin{equation} \label{conj Beilinson E}
\int_{\gamma_E} r_B(\xi_E) = r \pi^2 L'(E,-1) \qquad (r \in \Q^\times).
\end{equation}
Note that by the functional equation of $L(E,s)$ \cite[Appendix C, Theorem 16.3]{Sil09}, we have $L'(E,-1) = \pm \frac{N^2}{8 \pi^4} L(E,3)$.

Let us outline our strategy. Let $\varphi \colon X_1(N) \to E$ be a modular parametrisation. We consider the elements $\xi'_1(a,b)$ of $K_4^{(3)}(X_1(N))$ and, as in Beilinson's approach, we apply to them the trace map $\varphi_* \colon K_4^{(3)}(X_1(N)) \to K_4^{(3)}(E)$. Let $\gamma_{N,E}$ be a generator of the Hecke eigenspace of $H_1(X_1(N)(\C), \Q)^+$ associated to $E$. Then, to show \eqref{conj Beilinson E}, it is enough to find $a,b \in \Z/N\Z$ such that
\begin{equation} \label{conj Beilinson X1N}
\int_{\gamma_{N,E}} r_B(\xi'_1(a,b)) = r \pi^2 L'(E,-1) \qquad (r \in \Q^\times).
\end{equation}
It is this task that we perform numerically.

The element $\gamma_{N,E}$ can be computed using the implementation of modular symbols in Magma. More precisely, we obtain an explicit 1-cycle $\tilde{\gamma}_{N,E}$ on $X_1(N)(\C)$, such that the homology class of $\tilde{\gamma}_{N,E}$ belongs to the Hecke eigenspace associated to $E$, and whose image $\gamma_{N,E}$ under the projection map $H_1(X_1(N)(\C), \Q) \to H_1(X_1(N)(\C), \Q)^+$ is non-zero. The cycle $\tilde{\gamma}_{N,E}$ is given as a $\Z$-linear combination of modular symbols $\{h0 \to h\infty\}$ with $h \in \SL_2(\Z)$. The Magma code and the resulting cycles $\tilde{\gamma}_{N,E}$ for all isogeny classes of elliptic curves $E$ of conductor up to 100 are available in the GitHub repository \cite{Bru22}, see the files $\texttt{HomologyBasis.m}$ and $\texttt{DataH1ell}$, respectively.

For the computation of the integral \eqref{conj Beilinson X1N}, we restrict to the simpler situation where $\xi_1(a,b)$ has trivial residues at the cusps, so that $\xi'_1(a,b) = \xi_1(a,b)$ (note that this is automatically true if $N$ is prime or twice a prime by Proposition \ref{pro N prime}). We have the following lemma.

\begin{lemma} \label{lem rB r32}
Assume that $\xi_1(a,b)$ has trivial residues at the cusps, and write $\tilde{\gamma}_{N,E} = \sum n_i \{h_i 0 \to h_i \infty\}$. Then
\begin{equation} \label{eq rB r32}
\int_{\gamma_{N,E}} r_B(\xi'_1(a,b)) = \pm 2 \sum n_i \int_{h_i 0}^{h_i \infty} r_3(2)(\tilde{\xi}_1(a,b)).
\end{equation}
\end{lemma}

\begin{proof}
Since $r_B(\xi'_1(a,b))$ is invariant under the action of complex conjugation, we have
\begin{equation*}
\int_{\gamma_{N,E}} r_B(\xi'_1(a,b)) = \int_{[\tilde{\gamma}_{N,E}]} r_B(\xi'_1(a,b)).
\end{equation*}
By the diagram \eqref{eq reg diagram}, the restriction of $r_B(\xi'_1(a,b))$ to $Y_1(N)(\C)$ is represented by the differential form $\eta = \pm 2 r_3(2)(\tilde{\xi}_1(a,b))$, and $\eta$ has trivial residues at the cusps. Note that $\eta$ is defined only on $Y_1(N)(\C)$, while the cycle $\tilde{\gamma}_{N,E}$ passes through cusps. We modify this cycle slightly as shown in Figure \ref{modify gamma}.

\begin{figure}[h]
    \centering
\begin{tikzpicture}[scale=1.25]
\draw [-{Stealth[length=2mm]}] (0,0)--(1,0.7);
\draw (1,0.7) node[anchor=north west] {$\tilde{\gamma}_{N,E}$};
\draw (1,0.7)--(2,1.4);
\draw [-{Stealth[length=2mm]}] (2,1.4)--(3,0.7);
\draw (3,0.7)--(4,0);
\filldraw (2,1.4) circle (2pt);
\draw (2,1.4) node[anchor=south] {cusp};
\path [name path=gamma1] (6,0)--(8,1.4);
\path [name path=loop] (8,1.4) circle (.7cm);
\path [name path=gamma2] (8,1.4)--(10,0);
\draw [-{Stealth[length=2mm]}] (6,0)--(7,0.7);
\draw [name intersections={of=gamma1 and loop, by={A1}}] (7,0.7)--(A1);
\draw [-{Stealth[length=2mm]}] [name intersections={of=loop and gamma2, by={A2}}] (A2)--(9,0.7);
\draw (9,0.7)--(10,0);
\draw [dotted] (8,1.4) circle (.7cm);
\draw [<->] (8,1.4)--(8,2.1);
\draw (8,1.75) node[anchor=west] {$\varepsilon$};
\draw [-{Stealth[length=2mm]}] (A1) arc [start angle=-145, end angle=-90, radius=.7cm];
\draw (8,0.7) node[anchor=north] {$\gamma_\varepsilon$};
\draw (8,0.7) arc [start angle=-90, end angle=-35, radius=.7cm];
\end{tikzpicture}
    \caption{Modifying the cycle $\tilde{\gamma}_{N,E}$ at a cusp}
\label{modify gamma}
\end{figure}
The modified cycle $\gamma_{\varepsilon}$ (depending on a radius $\varepsilon$ for a fixed choice of local coordinate at each cusp) is homologous to $\tilde{\gamma}_{N,E}$ and is contained in $Y_1(N)(\C)$, so that
\begin{equation*}
\int_{[\tilde{\gamma}_{N,E}]} r_B(\xi'_1(a,b)) = \int_{\gamma_\varepsilon} \eta.
\end{equation*}
Since $\eta$ has trivial residues at the cusps, this integral converges to the right-hand side of \eqref{eq rB r32} as $\varepsilon \to 0$.
\end{proof}

Thanks to Lemma \ref{lem rB r32}, the integral \eqref{conj Beilinson X1N} can be computed using the method explained at the end of Section \ref{sec: num}.

It remains to find a way to determine whether $\xi_1(a,b)$ has trivial residues. By Theorem \ref{thm DJ residues}, this condition is equivalent to $\tilde{\xi}_1(a,b)$ having trivial residues at the cusps. The residue of the symbol $\tilde{\xi}_1(a,b)$ at a cusp $x \in X_1(N)$ is defined in Section \ref{subsec: de jeu curves}.
Let $\sigma_1, \overline{\sigma}_1, \ldots, \sigma_{r_2}, \overline{\sigma}_{r_2}$ be the non-real embeddings of $\Q(x)$. The Bloch-Wigner dilogarithm $D \colon \PP^1(\C) \to \R$ induces by linearity a map
\begin{equation*}
\tilde{D} \colon B_2(\Q(x)) \to \R^{r_2}, \quad \{a\}_2 \mapsto (D(\sigma_i(a)))_{1 \leqslant i \leqslant r_2}.
\end{equation*}
The restriction of $\tilde{D}$ to $H^1(\Gamma(\Q(x),2))$ is injective, because the composition of $\tilde{D}$ with Suslin's isomorphism $K_3^{(2)}(\Q(x)) \cong H^1(\Gamma(\Q(x),2))$ is a rational multiple of Borel's regulator map on $K_3^{(2)}(\Q(x))$ (see \cite[Section 2]{Gon95}). So $\xi_1(a,b)$ has trivial residue at $x$ if and only if $\tilde{D} \circ \Res_x (\tilde{\xi}_1(a,b)) = 0$. This condition can be checked numerically, since the cocycle $\tilde{\xi}_1(a,b)$ is explicit and the Bloch-Wigner dilogarithm can be computed in PARI/GP. We thus have a credible way to detect whether the residues of $\xi_1(a,b)$ at the cusps are trivial or not.

In practice, we did the following. For each elliptic curve $E$ of conductor $N \leqslant 50$, we searched for $a,b \in \Z/N\Z$ satisfying the following two conditions:
\begin{enumerate}
\item The residues of $\xi_1(a,b)$ at the cusps of $X_1(N)$ are (numerically) zero;
\item The integral \eqref{conj Beilinson X1N} is non-zero.
\end{enumerate}
For the first pair $(a,b)$ found, the integral \eqref{conj Beilinson X1N} always turned out to be numerically a rational multiple of $\pi^2 L'(E,-1)$, as predicted by Beilinson's conjecture.

\begin{theorem} \label{thm reg num}
For every elliptic curve $E$ of conductor $N \leqslant 50$, there exist $a,b \in \Z/N\Z$ such that the residues of $\xi_1(a,b)$ at the cusps of $X_1(N)$ are numerically zero, and
\begin{equation} \label{eq reg num}
\int_{\tilde{\gamma}_{N,E}} r_3(2) (\tilde{\xi}_1(a,b)) \stackrel{?}{=} \frac{r \pi^2}{N} L'(E,-1)
\end{equation}
where $r \in \Q^\times$ is a non-zero rational number of small height, and $\stackrel{?}{=}$ means equality to at least 40 decimal places.
\end{theorem}

Curiously, for our choice of $\tilde{\gamma}_{N,E}$ and $(a, b)$, the rational factor $r$ in \eqref{eq reg num} was always $\pm 3$, except for the curves $38a1$ ($r=9$), $42a1$ ($r=6$) and $43a1$ ($r=-6$). We don't have an explanation for this. Also, for the elliptic curve $E=\textrm{36a1}$ and the pair $(a,b)=(1,4)$, some residues are non-trivial, yet the integral is non-zero and proportional to $L'(E,-1)$. So having trivial residues at the cusps is not always necessary to get \eqref{eq reg num}.

Theorem \ref{thm reg num} was obtained using the function $\texttt{checkBeilinson}$ in the script \texttt{K4-reg-num.gp} in \cite{Bru22}. The results are stored in the file $\texttt{checkBeilinson50.txt}$. The computation took approximately 7 hours on a standard desktop computer.

\begin{example} \label{ex7}
For the elliptic curve $E \cong X_1(15)$ of Example \ref{ex1}, we find
\begin{equation*}
\int_{\tilde{\gamma}_{15,E}} r_3(2) (\tilde{\xi}_1(1,4)) \stackrel{?}{=} \frac{\pi^2}{5} L'(E,-1) = -\frac{45}{8\pi^2} L(E,3).
\end{equation*}
We also know from Example \ref{ex5} that $\xi_1(1,4)$ belongs to $K_4^{(3)}(X_1(15)) \cong K_4^{(3)}(E)$, so by Lemma \ref{lem rB r32}, we have
\begin{equation*}
\int_{\gamma_{15,E}} r_B (\xi_1(1,4)) \stackrel{?}{=} \pm \frac{2\pi^2}{5} L'(E,-1).
\end{equation*}
One can show in this case that $2 \gamma_{15,E}$ corresponds to a generator of $H_1(E(\C), \Z)^+$.
\end{example}

\section{Comparison with the Beilinson elements} \label{sec: comparison}

In this section, we aim to compare the elements $\xi_1(a,b)$ with the Beilinson elements in $K_4^{(3)}(Y_1(N))$. To this end, we compute numerically certain regulator integrals associated to both elements. These computations suggest that the elements $\xi_1(a,b)$ and the Beilinson elements are in fact proportional (Conjecture \ref{conj xi1 Eis001}).

One word of caution is in order here. The Beilinson regulator map defined on $K_4^{(3)}(Y_1(N))$ takes values in the de Rham cohomology group $H^1(Y_1(N)(\C), \R)$, while the modular symbols live in the relative homology group $H_1(X_1(N)(\C), S, \Z)$, where $S$ is the set of cusps. The integration of a form along a chain does not give a well-defined pairing between these two groups. Therefore, the integrals considered here are a priori not cohomological, and we have to fix representatives of the homology and cohomology classes. It would be interesting to find a conceptual framework to deal with this issue. The situation is a bit similar to that of periods of Eisenstein series \cite[Section 8]{PP13}, and Stevens's theory of extended modular symbols \cite{Ste89} may be helpful in this regard.

Let us recall the definition of the Beilinson elements (see \cite{Bei86} and \cite[Chapter 2]{Wan20}). For $N \geqslant 3$, let $p \colon E(N) \to Y(N)$ be the universal elliptic curve. In \cite[Section 3]{Bei86}, Beilinson constructs the Eisenstein symbol
\begin{equation*}
\Eis^1(\bolda) \in K_2^{(2)}(E(N)) \qquad (\bolda \in (\Z/N\Z)^2).
\end{equation*}
Following Beilinson, we define, for $\bolda, \boldb \in (\Z/N\Z)^2$,
\begin{equation} \label{def Eis001}
\Eis^{0,0,1}(\bolda, \boldb) = p_* (\Eis^1(\bolda) \cup \Eis^1(\boldb)) \in K_4^{(3)}(Y(N)).
\end{equation}
For $a,b \in \Z/N\Z$, we put
\begin{equation*}
\Eis_1^{0,0,1}(a,b) = \Eis^{0,0,1}((0,a), (0,b)).
\end{equation*}
Because of the transformation formula \cite[Lemma 2.3.5]{Wan20}
\begin{equation} \label{eq Eis001 GL2}
\Eis^{0,0,1}(\bolda, \boldb) | \gamma = \Eis^{0,0,1}(\bolda \gamma, \boldb \gamma) \qquad (\gamma \in \GL_2(\Z/N\Z)),
\end{equation}
the element $\Eis_1^{0,0,1}(a,b)$ defines an element of $K_4^{(3)}(Y_1(N))$.

The regulator computation goes as follows. We first use Magma to find a family $(\gamma_i)_i$ of $1$-cycles on $X_1(N)(\C)$ whose images under the projection map $H_1(X_1(N)(\C),\Q) \to H_1(X_1(N)(\C),\Q)^+$ form a basis of the latter vector space. Each $\gamma_i$ is given as a $\Z$-linear combination of modular symbols $\{h0 \to h\infty\}$ with $h \in \SL_2(\Z)$. The Magma code and the resulting data for $N \leqslant 50$ are available in the GitHub repository \cite{Bru22}, see the files $\texttt{HomologyBasis.m}$ and $\texttt{DataH1}$, respectively.
The integral of the form $r_3(2)(\tilde{\xi}_1(a,b))$ along $\gamma_i$ is then computed using the method explained at the end of Section \ref{sec: num}.

Regarding the Beilinson elements, we consider as in \cite[Proposition 2.4.2]{Wan20} a specific differential form $\Eis^{0,0,1}_{\mathcal{D}}(\bolda, \boldb)$ on $Y(N)(\C)$ representing $r_B(\Eis^{0,0,1}(\bolda, \boldb))$. Wang \cite[Theorem 0.1.3]{Wan20} computed theoretically the integral of $\nu^* \Eis^{0,0,1}_{\mathcal{D}}(\bolda, \boldb)$ along $\{0 \to \infty\}$. The formula involves the $L$-value of a product of two Eisenstein series of weight $1$. Thanks to \eqref{eq Eis001 GL2}, we can evaluate the integral of $\Eis_{1,\mathcal{D}}^{0,0,1}(a,b)$ along $\{h0 \to h\infty\}$ for any $h \in \SL_2(\Z)$. For simplicity, we state Wang's result only in the case $h=I_2$.

\begin{theorem}[Wang]
For any $a, b$ in $(\Z/N\Z) \backslash \{0\}$, we have
\begin{equation*}
\int_0^\infty \Eis^{0,0,1}_{1, \mathcal{D}}(a,b) = - \frac{36 \pi^2}{N^3} L'(\tilde{s}_a \tilde{s}_b, -1),
\end{equation*}
where the $\tilde{s}_x$ are Eisenstein series of weight 1 and level $\Gamma_1(N)$ defined by
\begin{equation*}
\tilde{s}_x(\tau) = \frac12 - \Bigl\{ \frac{x}{N} \Bigr\} + \sum_{\substack{m,n \geqslant 1 \\ n \equiv x \bmod{N}}} q^{mn} - \sum_{\substack{m,n \geqslant 1 \\ n \equiv -x \bmod{N}}} q^{mn} \qquad (q=e^{2\pi i\tau}),
\end{equation*}
and $\{\cdot\}$ denotes the fractional part.
\end{theorem}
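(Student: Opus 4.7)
The plan is to make both sides fully explicit and match them via a Rankin--Selberg type computation. On the right-hand side, the $L$-function $L(\tilde s_a \tilde s_b, s)$ is the $L$-function attached to the (non-holomorphic, in general) product of two weight-$1$ Eisenstein series on $\Gamma_1(N)$; it has an integral representation as the Mellin transform of $\tilde s_a(iy) \tilde s_b(iy)$ (up to completion). So the goal is to show that the regulator integral $\int_0^\infty r_3(2)(\Eis^{0,0,1}(a,b))$ equals, up to the explicit constant $-36\pi^2/N^3$, the derivative at $s=-1$ of this Mellin transform.

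First, I would write out the regulator of the individual Eisenstein symbol $\Eis^1(0,a)$ using Beilinson's explicit formula in Deligne cohomology $H^2_{\mathcal{D}}(E_1(N),\R(2))$. The class can be represented by a Kronecker--Eisenstein differential $1$-form on the universal elliptic curve $p\colon E_1(N)\to Y_1(N)$, whose pullback to a fixed fiber $E_\tau = \C/(\Z+\tau\Z)$ involves the real-analytic Kronecker--Eisenstein series in the variable $z$. A convenient way is to use Levin's Eisenstein--Kronecker--Lerch series, which realises the regulator with the correct normalisation. Then I would compute the cup product $\Eis^1(0,a)\cup \Eis^1(0,b)$ in Deligne cohomology and push it forward along $p$: concretely this amounts to fiber integration of a product of two Kronecker--Eisenstein forms over $E_\tau$, and using orthogonality relations for the characters $z\mapsto e^{2\pi i\langle z,\cdot\rangle}$ on the torus, the fiber integral collapses to a single real-analytic Eisenstein series of weight $2$ on $Y_1(N)$ (with coefficients depending on $a,b$).

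Once the push-forward $p_*(\Eis^1(0,a)\cup\Eis^1(0,b))$ is realised as an explicit real-analytic modular form times $d\tau$ on $Y_1(N)$, the regulator integral along $\{0,\infty\}$ becomes a Mellin-type integral $\int_0^\infty \Phi(y)\,dy$ where $\Phi(y)$ is an explicit combination of products of Fourier expansions of weight-$1$ Eisenstein series. The Rankin--Selberg unfolding then identifies this integral with the convolution $L$-function of $\tilde s_a$ and $\tilde s_b$ at a specific point. Concretely, by expanding both $\tilde s_a$ and $\tilde s_b$ in Fourier series and integrating term by term (or, equivalently, unfolding an Eisenstein series in the Rankin--Selberg kernel), one obtains
\begin{equation*}
\int_0^\infty \tilde s_a(iy)\,\tilde s_b(iy)\,y^{s}\,\frac{dy}{y} = C(s)\cdot L(\tilde s_a \tilde s_b,s)
\end{equation*}
for an explicit gamma/$\zeta$-factor $C(s)$. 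Specialising to $s=-1$ (or using the derivative at that point, where a leading zero of $C$ can force the appearance of $L'$) gives the required identity up to the constant.

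The main obstacle will be bookkeeping of constants and normalisations: the $-36\pi^2/N^3$ factor depends on how one normalises the Eisenstein symbol $\Eis^1(0,a)$, the single-valued polylogarithm $\LL_2$ appearing in $r_3(2)$, the fiber orientation in $p_*$, and the choice of period lattice. These normalisations differ between Beilinson, Deninger--Scholl and Levin, so a significant part of the work is aligning conventions. A secondary obstacle is convergence: both the fiber integral and the Rankin--Selberg integral require analytic continuation, and one must check that specialisation at $s=-1$ commutes with the regularisation implicit in Goncharov's formula for $r_3(2)$; this is where the shift from $L(\tilde s_a\tilde s_b,3)$ to $L'(\tilde s_a\tilde s_b,-1)$ via the functional equation comes in, and handling the possible pole/zero of the completed $L$-function at $s=-1$ is the delicate analytic point.
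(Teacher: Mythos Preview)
The paper does not give its own proof of this theorem: it is quoted as a result of W.~Wang from his thesis \cite{wang:thesis}, and the only remark on the method is that ``the proof of Theorem \ref{thm wang reg} uses the Rogers--Zudilin method'' and that Wang in fact proves a more general statement for all $\Eis^{k_1,k_2,j}(u_1,u_2)$.

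Your outline is reasonable through the first two steps: representing the regulator of $\Eis^1(0,a)$ by a Kronecker--Eisenstein form on the universal elliptic curve and then fiber-integrating the cup product to obtain an explicit real-analytic form on $Y_1(N)$ is indeed how one reduces the problem to a one-variable integral $\int_0^\infty \Phi(y)\,dy$. Where your plan diverges from Wang's is in the last step. You invoke ``Rankin--Selberg unfolding'', but that technique applies to integrals over a fundamental domain against a non-holomorphic Eisenstein series, not to a line integral along $\{0,\infty\}$; and what you actually write, the Mellin transform $\int_0^\infty \tilde s_a(iy)\tilde s_b(iy)\,y^s\,dy/y$, is simply the $L$-function of the weight-$2$ form $\tilde s_a\tilde s_b$, with no unfolding involved. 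The genuine difficulty is that $\Phi(y)$ is \emph{not} $\tilde s_a(iy)\tilde s_b(iy)$ to begin with: after fiber integration one obtains a real-analytic (non-holomorphic) Eisenstein-type series, and the content of the Rogers--Zudilin method is a specific reorganisation of the resulting double sum (a swap of summation indices $m\leftrightarrow n$ combined with a change of variable in $y$) that converts this non-holomorphic object into the Mellin transform of the \emph{holomorphic} product $\tilde s_a\tilde s_b$. Your proposal does not contain this swap, and without it there is no mechanism by which the holomorphic Eisenstein series $\tilde s_a,\tilde s_b$ would appear out of the Kronecker--Eisenstein data. So the gap is precisely the Rogers--Zudilin step; once you insert it, the constant bookkeeping and analytic continuation issues you flag are the remaining (genuine but routine) work.
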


In fact, Wang proves a much more general statement concerning the regulators of motivic classes $\Eis^{k_1,k_2,j}(\bolda, \boldb)$ on Kuga-Sato varieties \cite[Chapter 6]{Wan20}. These classes were defined by Beilinson, Deninger--Scholl \cite[5.7]{DS91} and Gealy \cite{Gea05}. The formula involves the completed $L$-function of a modular form of weight $k_1+k_2+2$ evaluated at $s=-j$.

The Eisenstein series $\tilde{s}_x$ appear in the work of Borisov and Gunnells \cite[3.18]{BG01}. Moreover, the transform of $\tilde{s}_x$ under the Atkin-Lehner involution $W_N \colon \tau \mapsto -1/N\tau$ is a multiple of the Eisenstein series denoted by $s_x$ in \cite[3.5]{BG01}. We can then use the standard recipe \cite[eq.~(4.3.13)]{Miy06} to compute numerically the $L$-value $L'(\tilde{s}_a \tilde{s}_b, -1)$, and thus the regulator integral. The computation for an arbitrary modular symbol $\{h0 \to h\infty\}$ proceeds similarly. We obtain the following theorem.

\begin{theorem} \label{thm comparison xi1 Eis001}
For every $N \leqslant 28$, every $a,b \in \Z/N\Z$ and every cycle $\gamma_i$, we have
\begin{equation*}
\int_{\gamma_i} r_3(2)(\tilde{\xi}_1(a,b)) \stackrel{?}{=} \frac{N^2}{6} \int_{\gamma_i} \Eis^{0,0,1}_{1,\mathcal{D}}(a,b),
\end{equation*}
where $\stackrel{?}{=}$ means equality to at least 40 decimal places.
\end{theorem}

Theorem \ref{thm comparison xi1 Eis001} was obtained using the function $\texttt{checkElements}$ in the script \texttt{K4-reg-num.gp} in \cite{Bru22}. The results are stored in the file $\texttt{checkElements28.txt}$. The computation took approximately 4 days on a standard desktop computer.

Based on Theorem \ref{thm comparison xi1 Eis001} and on the diagram \eqref{eq reg diagram}, we formulate the following conjecture.

\begin{conjecture} \label{conj xi1 Eis001}
For every $a, b \in \Z/N\Z$, we have in $K_4^{(3)}(Y_1(N))$:
\begin{equation*}
\xi_1(a, b) = \pm \frac{N^2}{3} \Eis_1^{0,0,1}(a, b).
\end{equation*}
\end{conjecture}

There is now theoretical evidence for this conjecture. Namely, for any $\bolda, \boldb \in (\Z/N\Z)^2$ such that the coordinates of $\bolda$, $\boldb$, $\bolda+\boldb$ are non-zero, the integral of $\nu^* r_3(2)(\tilde{\xi}(\bolda, \boldb))$ from $0$ to $\infty$ has been computed in terms of special values of $L$-functions of modular forms of weight~$2$ at $s=3$ \cite{BZ23}. The formula obtained is consistent with Wang's theorem, which leads us to expect that Conjecture \ref{conj xi1 Eis001} extends to the elements $\xi(\bolda, \boldb)$:

\begin{conjecture} \label{conj xi Eis001}
For every $\bolda, \boldb \in (\Z/N\Z)^2$, we have
\begin{equation*}
\xi(\bolda, \boldb) = \pm \frac{N^2}{3} \Eis^{0,0,1}(\bolda, \boldb).
\end{equation*}
\end{conjecture}

Conjectures \ref{conj xi1 Eis001} and \ref{conj xi Eis001} relate two motivic classes whose constructions are quite different. However, both classes are of modular nature, and we expect that there is a modular proof of the relation between them. See Section \ref{subsec: approach} for some speculation on this.

As further evidence for these conjectures, one could try to compute the $p$-adic regulator of $\xi(\bolda, \boldb)$. The $p$-adic regulator of the Beilinson elements has been computed by Gealy in terms of $p$-adic $L$-values \cite{Gea03}. Besser and De Jeu have obtained a general formula for the syntomic regulator on $K_4$ of curves using De Jeu's polylogarithmic complex \cite{BdJ12}.

\section{Perspectives} \label{sec: perspectives}

\subsection{Modular units} \label{subsec: uabcd}

The modular units $u(\bolda, \boldb, \boldc, \boldd)$ deserve further study. For $N=2$, there is only one such modular unit up to permutation of $\bolda, \boldb, \boldc, \boldd$, and we recover the modular $\lambda$ function:
\begin{equation*}
\lambda(\tau) = u((0,0), (1,0), (0,1), (1,1))(\tau).
\end{equation*}
Indeed, $\lambda$ is the cross-ratio of the $x$-coordinates of the $2$-torsion points of an elliptic curve.

For given $N$ and $a, b, c, d$ in $\Z/N\Z$, the divisor and the degree of the modular unit $u_1(a,b,c,d)$ on $X_1(N)$ can be computed using \eqref{eq u1 g0x} and Lemma \ref{lem div g0a}. Experimentally, these units have remarkably low degree. The following facts, obtained with PARI/GP, illustrate this.

\begin{enumerate}
\item A Hauptmodul for $X_1(N)$ is given by $u_1(0,1,2,3)$ for $N \in \{6,7,8\}$, and by $u_1(1,2,3,5)$ for $N \in \{9,10,12\}$ (these are the integers $N \geqslant 6$ such that $X_1(N)$ has genus 0). 
\item For $N$ prime, $7 < N < 300$, $N \neq 31$, the lowest degree among the units $u_1(a,b,c,d)$ is attained for $(a,b,c,d)=(1,2,3,5)$. Van Hoeij and Smith \cite{HS21} have proved that $\deg (u_1(1,2,3,5)) = [11N^2/840]$ for any prime $N>7$, where $[\cdot]$ denotes the nearest integer (in their notation, $u_1(1,2,3,5)$ is equal to the unit $F_7/F_8$ up to an homography). In fact, for $N>7$ prime, the unit $u_1(1,2,3,5)$ yields the lowest known degree for a non-constant map $X_1(N) \to \PP^1$ defined over $\Q$, except for $N \in \{31, 67, 101\}$, where the lowest known degree is one less \cite[Table 1]{DH14}.
\item For $N$ prime, $7 < N < 300$, the highest degree among the $u_1(a,b,c,d)$ is attained for $(a,b,c,d)=(0,1,3,4)$, the degree appearing to be $[N^2/35]$.
\end{enumerate}
These degrees can be compared to the Mason bound for $Y_1(N)$ (see Remark \ref{rk S-unit equation}), which equals $(N^2-1)/12$ if $N \geqslant 5$ is prime.

I do not know whether the units $u(\bolda, \boldb, \boldc, \boldd)$, resp. $u_1(a,b,c,d)$, span the group of modular units $\mathcal{O}(Y(N))^\times$, resp. $\mathcal{O}(Y_1(N))^\times$, up to constants. A maybe more natural question is whether the elements $\{u(\bolda, \boldb, \boldc, \boldd)\}_2$, resp. $\{u_1(a,b,c,d)\}_2$, span the vector spaces $B_2(Y(N))/B_2(\Q(\zeta_N))$, resp. $B_2(Y_1(N))/B_2(\Q)$, with the definition of \cite[Definition 2.1]{Gon08}.

It would be also interesting to investigate the integrality properties of these modular units, in the sense of an integral model of the modular curve.

\subsection{Constructing elements in $K_4$ of curves} \label{subsec: construct K4}

In Section \ref{subsec: de jeu curves}, we outlined a strategy to construct elements in $K_4$ of curves, using an auxiliary finite set $S$ of points on the curve. This approach requires to compute solutions to the $S$-unit equation for the function field of the curve. As we saw, this method works well for the modular curves $X(N)$ and $X_1(N)$, taking $S$ to be the set of cusps. The units $u(\bolda, \boldb, \boldc, \boldd)$ provide us with plenty of solutions to the $S$-unit equation (of the order of $N^8/64$, taking into account that permuting $\bolda, \boldb, \boldc, \boldd$ gives rise to only 6 distinct units). In this way, I was first able to construct cocycles on these curves for small values of $N$. I could also check that the associated $K_4$ elements were non-trivial, by computing numerically their images under the regulator map. However, this construction only worked case by case and, from a computational point of view, became impractical for larger $N$. Moreover, no pattern emerged in these cocycles. I then found a general and uniform construction, using the lift of the $3$-terms relations in $K_2(Y(N)) \otimes \Z\bigl[\frac{1}{6N}\bigr]$ from Section \ref{sec: 3-term}.

Although we concentrated in this article on modular curves, there are several other interesting situations. For example:
\begin{enumerate}
\item $X=\PP^1$ and $S = \{0,\infty\} \cup \mu_N$, where $\mu_N$ is the group of $N$th roots of unity (see the work of Zhao \cite{Zha22} for the $S$-unit equation in this context);
\item $X=E$ is an elliptic curve, and $S$ is a finite subgroup of $E$;
\item $X$ is the Fermat curve $F_N : x^N+y^N=z^N$, and $S = F_N \cap \{xyz = 0\}$.
\item $X$ is the Klein quartic $x^3 y + y^3 z + z^3 x = 0$ (which is isomorphic to a quotient of $F_7$, and whose base change to $\Q(\zeta_7)$ is isomorphic to $X(7)$ over $\Q(\zeta_7)$ \cite{Elk99}).
\end{enumerate}
It would be interesting to construct elements in $K_4$ of those curves.

\subsection{Comparison with the Beilinson elements} \label{subsec: approach}

To study Conjecture \ref{conj xi Eis001}, one might try to use the cohomology at infinite level explained in Section \ref{subsec: infinite level}. Let $\bolda, \boldb \in (\Q/\Z)^2$, and choose $N \geqslant 1$ such that $N\bolda = N\boldb = 0$. Consider the element
\begin{equation*}
\xi'(\bolda, \boldb) \in K_4^{(3)}(X(\infty)).
\end{equation*}
As explained in Section \ref{subsec: infinite level}, this element can conjecturally be computed using any level $N'$ divisible by $N$. Assuming $N'$ odd for simplicity, the element $\xi(\bolda, \boldb)$ at level $N'$ would then be the class of the cocycle
\begin{equation} \label{eq xi N'}
\Bigl( \sum_{\boldx \in (\Z/N'\Z)^2} \{u(\boldzero, \boldx, \bolda-\boldx, \boldb+\boldx)\}_2 \Bigr) \otimes \frac{g_\boldb}{g_\bolda}.
\end{equation}
One may view $(\Z/N'\Z)^2$ as the full $N'$-torsion subgroup of the universal elliptic curve $E(N)$ over $Y(N)$. Applying the regulator map and taking the limit as $N' \to \infty$, the sum \eqref{eq xi N'} becomes (formally) an integral along the fibres of $p : E(N) \to Y(N)$. This is reminiscent of the definition \eqref{def Eis001} of the Beilinson element $\Eis^{0,0,1}(\bolda, \boldb)$, which is obtained by pushing forward along $p$. Indeed, after applying the regulator map, the pushforward in motivic cohomology corresponds to integrating along the fibres of $p$.

\subsection{Applications to Mahler measures} \label{subsec: mahler}

We now describe the recent work \cite{Bru23}, which builds on the constructions of this article, and is also related to our running example. This work is concerned with the Mahler measure of the polynomial $(1+x)(1+y)+z$. Recall that the (logarithmic) Mahler measure $m(P)$ of a polynomial $P \in \C[x_1,\ldots,x_n]$ is defined by
\begin{equation*}
m(P) = \frac{1}{(2\pi i)^n} \int_{T^n} \log |P(x_1,\ldots,x_n)| \frac{dx_1}{x_1} \wedge \cdots \wedge \frac{dx_n}{x_n},
\end{equation*}
where $T^n$ is the $n$-torus $|x_1| = \cdots = |x_n| = 1$. We refer to \cite{BZ20} for an introduction to the Mahler measure and a survey of recent results.

Boyd and Rodriguez Villegas have conjectured relations between the Mahler measure of certain 3-variable polynomials and $L$-functions of elliptic curves evaluated at $s=3$ (see \cite[Section 8.4]{BZ20} and \cite{Lal15}). In \cite{Bru23}, we prove one of these identities.

\begin{theorem} \label{thm LE3}
We have $m((1+x)(1+y)+z) = -2L'(E,-1)$, where
\begin{equation*}
E : (1+x)^2 (1+y)^2 = xy
\end{equation*}
is the elliptic curve from our running example (see Example \ref{ex1}).
\end{theorem}

The proof of this theorem relies on the fact that $E$ is isomorphic to $X_1(15)$ and uses the elements $\xi_1(a,b)$ for $N=15$. This proof requires to have an explicit expression for the cocycles $\tilde{\xi}_1(a,b)$, so Theorem \ref{triangulation manin3} is crucial here.

In more detail, Lal\'{\i}n \cite{Lal15} has shown that
\begin{equation} \label{eq Lalin}
m((1+x)(1+y)+z) = \frac{1}{4\pi^2} \int_{\gamma_E^+} r_3(2)(\tilde{\xi}_E),
\end{equation}
where $\tilde{\xi}_E = \{-x\}_2 \otimes y - \{-y\}_2 \otimes x$ and $\gamma_E^+$ is a generator of $H_1(E(\C),\Z)^+$. By Example \ref{ex3}, the symbol $\tilde{\xi}_E$ defines an element $\xi_E$ of $K_4^{(3)}(E)$. But Example \ref{ex5} also gives an element $\xi_1(1,4)$ of $K_4^{(3)}(X_1(15))$. Both groups can be identified and are expected to be $1$-dimensional. Automated computations in the Goncharov complex reveal that
\begin{equation} \label{eq xiE}
\xi_E = -20 \xi_1(1,4).
\end{equation}
Theorem \ref{thm LE3} now follows from the theoretical computation of modular regulator integrals in \cite{BZ23}. Note that Theorem \ref{thm LE3} is consistent with the numerical computation in Example \ref{ex7}.

More generally, Lal\'{\i}n has shown that the Mahler measure of a polynomial $P(x_1, \ldots, x_n)$ is related to the Goncharov complexes of specific subvarieties of the hypersurface defined by $P$ \cite[Section 6]{Lal07}. In the example above, the subvariety is the so-called Maillot variety, defined by the equations $P(x,y,z) = P(\frac{1}{x}, \frac{1}{y}, \frac{1}{z}) = 0$. The relations \eqref{eq Lalin} and \eqref{eq xiE} justify our claim that the elements $\xi_1(a,b)$ are closer to the Mahler measure than the Beilinson elements. It would be extremely interesting to develop ``modular'' versions of the Goncharov complexes in arbitrary motivic weight and for higher-dimensional Kuga--Sato varieties, and to find modular constructions of cocycles in these complexes.

\vspace{\baselineskip}

\author{Fran\c{c}ois Brunault, francois.brunault@ens-lyon.fr\\
UMPA, \'{E}NS Lyon, 46 all\'{e}e d'Italie, 69007 Lyon, France}


\begin{thebibliography}{99}

\bibitem{Bei84}
\textsc{A.\,A.~Be\u ilinson},
Higher regulators and values of $L$-functions,
\emph{J. Math. Sci.} \textbf{30} (1985), 2036--2070.
doi:\href{https://doi.org/10.1007/BF02105861}{10.1007/BF02105861}

\bibitem{Bei86}
\textsc{A.~A.~Be\u ilinson},
Higher regulators of modular curves,
in \emph{Applications of algebraic K-theory to algebraic geometry and number theory} (Boulder, CO, 1983),
Contemp. Math. \textbf{55} (Amer. Math. Soc., Providence, RI, 1986), 1--34.
doi:\href{https://doi.org/10.1090/conm/055.1/862627}{10.1090/conm/055.1/862627}

\bibitem{BdJ12}
\textsc{A.~Besser} and \textsc{R.~de Jeu},
The syntomic regulator and $K_4$ of curves,
\emph{Pacific J. Math.} \textbf{260} (2012), no.~2, 305--380.
doi:\href{https://doi.org/10.2140/pjm.2012.260.305}{10.2140/pjm.2012.260.305}

\bibitem{Blo10}
\textsc{S.~Bloch},
\emph{Lectures on algebraic cycles},
Second edition.
New Mathematical Monographs \textbf{16} (Cambridge University Press, Cambridge, 2010).
doi:\href{https://doi.org/10.1017/CBO9780511760693}{10.1017/CBO9780511760693}

\bibitem{Bol22}
\textsc{V.~Bolbachan},
On functional equations for the elliptic dilogarithm,
\emph{Eur. J. Math.} \textbf{8} (2022), no.~2, 625--633.
doi:\href{https://doi.org/10.1007/s40879-022-00533-9}{10.1007/s40879-022-00533-9}

\bibitem{Bol23}
\textsc{V.~Bolbachan},
Chow dilogarithm and strong Suslin reciprocity law,
\emph{J. Algebraic Geom.} \textbf{32} (2023), no.~4, 697--728.
doi:\href{https://doi.org/10.1090/jag/811}{10.1090/jag/811}

\bibitem{Bol24}
\textsc{V.~Bolbachan},
On the Goncharov's conjecture in degree $m-1$ and weight $m$,
\emph{Preprint}, April 2024.
arXiv:\href{https://arxiv.org/abs/2404.06271}{2404.06271 [math.AG]}

\bibitem{BG01}
\textsc{L.~A.~Borisov} and \textsc{P.~E.~Gunnells},
Toric modular forms and nonvanishing of $L$-functions,
\emph{J. Reine Angew. Math.} \textbf{539} (2001), 149--165.
doi:\href{https://doi.org/10.1515/crll.2001.071}{10.1515/crll.2001.071}
 
\bibitem{Magma}
\textsc{W.~Bosma}, \textsc{J.~Cannon} and \textsc{C.~Playoust},
The {M}agma algebra system. {I}. {T}he user language. Computational algebra and number theory (London, 1993),
\emph{J. Symbolic Comput.} \textbf{24} (1997), no.~3-4, 235--265.
doi:\href{https://doi.org/10.1006/jsco.1996.0125}{10.1006/jsco.1996.0125}

\bibitem{Bru08}
\textsc{F.~Brunault},
Beilinson--{K}ato elements in {$K_2$} of modular curves,
\emph{Acta Arith.} \textbf{134} (2008), no.~3, 283--298.
doi:\href{https://doi.org/10.4064/aa134-3-7}{10.4064/aa134-3-7}

\bibitem{Bru22}
\textsc{F.~Brunault},
$K_4$ of modular curves, GitHub repository. \href{https://github.com/francoisbrunault/K4-of-modular-curves}{https://github.com/francoisbrunault/K4-of-modular-curves} (version: 2025-07-11).

\bibitem{Bru23}
\textsc{F.~Brunault},
On the Mahler measure of $(1+x)(1+y)+z$,
\emph{Preprint}, May 2023.
arXiv:\href{https://arxiv.org/abs/2305.02992}{2305.02992 [math.NT]}

\bibitem{BN18}
\textsc{F.~Brunault} and \textsc{M. Neururer},
Mahler measures of elliptic modular surfaces,
\emph{Trans. Amer. Math. Soc.} \textbf{372} (2019), 119--152.
doi:\href{https://doi.org/10.1090/tran/7524}{10.1090/tran/7524}

\bibitem{BZ20}
\textsc{F.~Brunault} and \textsc{W.~Zudilin},
\emph{Many Variations of Mahler Measures: A Lasting Symphony},
Australian Mathematical Society Lecture Series \textbf{28} (Cambridge University Press, Cambridge, 2020).
doi:\href{https://doi.org/10.1017/9781108885553}{10.1017/9781108885553}

\bibitem{BZ23}
\textsc{F.~Brunault} and \textsc{W.~Zudilin},
Modular regulators and multiple Eisenstein values,
\emph{Preprint}, March 2023.
arXiv:\href{https://arxiv.org/abs/2303.15554}{2303.15554 [math.NT]}

\bibitem{Bus08}
\textsc{C.~Busuioc},
The Steinberg symbol and special values of $L$-functions,
\emph{Trans. Amer. Math. Soc.} \textbf{360} (2008), no.~11, 5999--6015.
doi:\href{https://doi.org/10.1090/S0002-9947-08-04701-6}{10.1090/S0002-9947-08-04701-6}

\bibitem{BBPS23}
\textsc{C.~Busuioc}, \textsc{J.~Park}, \textsc{O.~Patashnick} and \textsc{G.~Stevens},
Modular Symbols with Values in Beilinson-Kato Distributions,
\emph{Preprint}, November 2023.
arXiv:\href{https://arxiv.org/abs/2311.14620}{2311.14620 [math.NT]}

\bibitem{Car92}
\textsc{P.~Cartier},
An introduction to zeta functions,
in \emph{From number theory to physics} (Les Houches, 1989) (Springer-Verlag, Berlin, 1992), 1--63.

\bibitem{CD19}
\textsc{D-C.~Cisinski} and \textsc{F.~Déglise},
\emph{Triangulated categories of mixed motives},
Springer Monogr. Math. (Springer, Cham, 2019).
doi:\href{https://doi.org/10.1007/978-3-030-33242-6}{10.1007/978-3-030-33242-6}

\bibitem{CS17}
\textsc{H.~Cohen} and \textsc{F.~Strömberg},
\emph{Modular forms. A classical approach},
Grad. Stud. Math. \textbf{179}
(American Mathematical Society, Providence, RI, 2017).
doi:\href{https://doi.org/10.1090/gsm/179}{10.1090/gsm/179}

\bibitem{Deg08}
\textsc{F.~D{\'e}glise},
Around the {G}ysin triangle. {II},
\emph{Doc. Math.} \textbf{13} (2008), 613--675.
doi:\href{https://doi.org/10.4171/dm/256}{10.4171/dm/256}

\bibitem{DR73}
\textsc{P.~Deligne} and \textsc{M.~Rapoport},
Les schémas de modules de courbes elliptiques,
in \emph{Modular functions of one variable, II} (Proc. Internat. Summer School, Univ. Antwerp, Antwerp, 1972),
Lecture Notes in Math. \textbf{349} (Springer-Verlag, Berlin-New York, 1973), 143--316.
doi:\href{https://doi.org/10.1007/978-3-540-37855-6_4}{10.1007/978-3-540-37855-6\_4}

\bibitem{DS91}
\textsc{C.~Deninger} and \textsc{A.~J.~Scholl},
The Be\u\i linson conjectures,
in \emph{$L$-functions and arithmetic} (Durham, 1989),
London Math. Soc. Lecture Note Ser. \textbf{153} (Cambridge University Press, Cambridge, 1991), 173--209.
doi:\href{https://doi.org/10.1017/CBO9780511526053.007}{10.1017/CBO9780511526053.007}

\bibitem{DH14}
\textsc{M.~Derickx} and \textsc{M.~van Hoeij},
Gonality of the modular curve {$X_1(N)$},
\emph{J. Algebra} \textbf{417} (2014), 52--71.
doi:\href{https://doi.org/10.1016/j.jalgebra.2014.06.026}{10.1016/j.jalgebra.2014.06.026}

\bibitem{DI95}
\textsc{F.~Diamond} and \textsc{J.~Im},
Modular forms and modular curves,
in \emph{Seminar on Fermat's Last Theorem} (Toronto, ON, 1993--1994),
CMS Conf. Proc. \textbf{17} (Amer. Math. Soc., Providence, RI, 1995), 39--133.

\bibitem{DJZ06}
\textsc{T.~Dokchitser}, \textsc{R.~de Jeu} and \textsc{D.~Zagier},
Numerical verification of Beilinson's conjecture for {$K_2$} of hyperelliptic curves,
\emph{Compos. Math.} \textbf{142} (2006), no.~2, 339--373. 
doi:\href{https://doi.org/10.1112/S0010437X05001892}{10.1112/S0010437X05001892}

\bibitem{Dri73}
\textsc{V.~G.~Drinfel'd},
Two theorems on modular curves,
\emph{Functional Anal. Appl.} \textbf{7} (1973), 155--156.
doi:\href{https://doi.org/10.1007/BF01078890}{10.1007/BF01078890}

\bibitem{Dup21}
\textsc{C.~Dupont},
Progrès récents sur la conjecture de Zagier et le programme de Goncharov,
in \emph{S\'eminaire Bourbaki}, Expos\'e no.~1176, April 2021.
doi:\href{https://doi.org/10.24033/ast.1165}{10.24033/ast.1165}

\bibitem{Elk99}
\textsc{N.~D.~Elkies},
The Klein quartic in number theory,
in \emph{The eightfold way},
Math. Sci. Res. Inst. Publ. \textbf{35} (Cambridge University Press, Cambridge, 1999), 51--101.

\bibitem{FK92}
\textsc{H.~M.~Farkas} and \textsc{I.~Kra},
\emph{Riemann surfaces},
Second edition.
Grad. Texts in Math. \textbf{71} (Springer-Verlag, New York, 1992).
doi:\href{https://doi.org/10.1007/978-1-4612-2034-3}{10.1007/978-1-4612-2034-3}

\bibitem{FK24}
\textsc{T.~Fukaya} and \textsc{K.~Kato},
On conjectures of Sharifi,
\emph{Kyoto J. Math.} \textbf{64} (2024), no.~2, 277--420.
doi:\href{https://doi.org/10.1215/21562261-2023-0018}{10.1215/21562261-2023-0018}

\bibitem{Gea05}
\textsc{M.~T.~Gealy},
\emph{On the Tamagawa number conjecture for motives attached to modular forms},
PhD thesis (California Institute of Technology, 2005),
\href{https://resolver.caltech.edu/CaltechETD:etd-12162005-124435}{https://resolver.caltech.edu/CaltechETD:etd-12162005-124435}

\bibitem{Gea03}
\textsc{M.~T.~Gealy}, Special values of $p$-adic $L$-functions associated to modular forms,
\emph{Preprint}, 2003.

\bibitem{Gon95}
\textsc{A.~B.~Goncharov},
Geometry of configurations, polylogarithms, and motivic cohomology,
\emph{Adv. Math.} \textbf{114} (1995), no.~2, 197--318. 
doi:\href{https://doi.org/10.1006/aima.1995.1045}{10.1006/aima.1995.1045}

\bibitem{Gon96}
\textsc{A.~B.~Goncharov},
Deninger's conjecture on $L$-functions of elliptic curves at {$s=3$},
Algebraic geometry, 4.
\emph{J. Math. Sci.} \textbf{81} (1996), no.~3, 2631--2656. 
doi:\href{https://doi.org/10.1007/BF02362333}{10.1007/BF02362333}

\bibitem{Gon02}
\textsc{A.~B.~Goncharov},
Explicit regulator maps on polylogarithmic motivic complexes,
in \emph{Motives, polylogarithms and Hodge theory, Part I} (Irvine, CA, 1998),
Int. Press Lect. Ser. \textbf{3}, I (Int. Press, Somerville, MA, 2002), 245--276.

\bibitem{Gon05}
\textsc{A.~B.~Goncharov},
Polylogarithms, regulators, and Arakelov motivic complexes,
\emph{J. Amer. Math. Soc.} \textbf{18} (2005), no.~1, 1--60.
doi:\href{https://doi.org/10.1090/S0894-0347-04-00472-2}{10.1090/S0894-0347-04-00472-2}

\bibitem{Gon08}
\textsc{A.~B.~Goncharov},
Euler complexes and geometry of modular varieties,
\emph{Geom. Funct. Anal.} \textbf{17} (2008), no.~6, 1872--1914.
doi:\href{https://doi.org/10.1007/s00039-007-0643-6}{10.1007/s00039-007-0643-6}

\bibitem{GR25}
\textsc{A.~B.~Goncharov} and \textsc{D. Rudenko},
Motivic correlators, cluster varieties and Zagier's conjecture on $\zeta(\mathrm{F},4)$, 
\emph{Preprint}, January 2025.
arXiv:\href{https://arxiv.org/abs/1803.08585}{1803.08585 [math.NT]}

\bibitem{HS21}
\textsc{M.~van Hoeij} and \textsc{H.~Smith},
A Divisor Formula and a Bound on the $\mathbb{Q}$-Gonality of the Modular Curve $X_1(N)$,
\emph{Res. Number Theory} \textbf{7} (2021).
doi:\href{https://doi.org/10.1007/s40993-021-00243-3}{10.1007/s40993-021-00243-3}

\bibitem{Jav20}
\textsc{A.~Javanpeykar},
The {$S$}-unit equation for functions on curves, MathOverflow,
\href{https://mathoverflow.net/q/355779}{https://mathoverflow.net/q/355779} (version: 2020-03-26).

\bibitem{Jeu95}
\textsc{R.~de Jeu},
Zagier's conjecture and wedge complexes in algebraic $K$-theory,
\emph{Compositio Math.} \textbf{96} (1995), no.~2, 197--247.
\href{https://www.numdam.org/item/CM_1995__96_2_197_0/}{https://www.numdam.org/item/CM\_1995\_\_96\_2\_197\_0/}

\bibitem{Jeu96}
\textsc{R.~de Jeu},
On $K_4^{(3)}$ of curves over number fields,
\emph{Invent. Math.} \textbf{125} (1996), no.~3, 523--556. 
doi:\href{https://doi.org/10.1007/s002220050085}{10.1007/s002220050085}

\bibitem{Jeu00}
\textsc{R.~de Jeu},
Towards regulator formulae for the $K$-theory of curves over number fields,
\emph{Compositio Math.} \textbf{124} (2000), no.~2, 137--194. 
doi:\href{https://doi.org/10.1023/A:1026440915009}{10.1023/A:1026440915009}

\bibitem{Kat04}
\textsc{K.~Kato},
$p$-adic Hodge theory and values of zeta functions of modular forms,
\emph{Ast\'erisque} \textbf{295} (2004), 117--290.
\href{https://www.numdam.org/item/AST_2004__295__117_0/}{https://www.numdam.org/item/AST\_2004\_\_295\_\_117\_0/}

\bibitem{KM85}
\textsc{N.~M.~Katz} and \textsc{B.~Mazur},
\emph{Arithmetic moduli of elliptic curves},
Ann. of Math. Stud. \textbf{108} (Princeton University Press, Princeton, NJ, 1985).
doi:\href{https://doi.org/10.1515/9781400881710}{10.1515/9781400881710}

\bibitem{KK25}
\textsc{M.~Koecher} and \textsc{A.~Krieg},
\emph{Elliptic functions and modular forms},
Universitext (Springer, Berlin, 2025).
doi:\href{https://doi.org/10.1007/978-3-662-71224-5}{10.1007/978-3-662-71224-5}

\bibitem{KL81}
\textsc{D.~S.~Kubert} and \textsc{S.~Lang},
\emph{Modular units}, 
Grundlehren der Mathematischen Wissenschaften \textbf{244} (Springer-Verlag, New York, 1981).

\bibitem{Lal07}
\textsc{M.~N.~Lal\'{\i}n},
An algebraic integration for Mahler measure,
\emph{Duke Math. J.} \textbf{138} (2007), no.~3, 391--422.
doi:\href{https://doi.org/10.1215/S0012-7094-07-13832-8}{10.1215/S0012-7094-07-13832-8}

\bibitem{Lal15}
\textsc{M.~Lal\'{\i}n},
Mahler measure and elliptic curve $L$-functions at $s=3$,
\emph{J. Reine Angew. Math.} \textbf{709} (2015), 201--218.
doi:\href{https://doi.org/10.1515/crelle-2013-0107}{10.1515/crelle-2013-0107}

\bibitem{Lan83}
\textsc{S.~Lang},
\emph{Fundamentals of Diophantine geometry} (Springer-Verlag, New York, 1983).
doi:\href{https://doi.org/10.1007/978-1-4757-1810-2}{10.1007/978-1-4757-1810-2}

\bibitem{Lan87}
\textsc{S.~Lang},
\emph{Elliptic functions},
Second edition.
Grad. Texts in Math. \textbf{112} (Springer-Verlag, New York, 1987).
doi:\href{https://doi.org/10.1007/978-1-4612-4752-4}{10.1007/978-1-4612-4752-4}

\bibitem{Man72}
\textsc{Ju.~I.~Manin},
Parabolic points and zeta-functions of modular curves,
\emph{Math. USSR Izv.} \textbf{6} (1972), 19--64.
doi:\href{https://doi.org/10.1070/IM1972v006n01ABEH001867}{10.1070/IM1972v006n01ABEH001867}

\bibitem{Mas83}
\textsc{R.~C.~Mason},
The hyperelliptic equation over function fields,
\emph{Math. Proc. Cambridge Philos. Soc.} \textbf{93} (1983), no.~2, 219--230.
doi:\href{https://doi.org/10.1017/S0305004100060497}{10.1017/S0305004100060497}

\bibitem{Mel19}
\textsc{A.~Mellit},
Elliptic dilogarithms and parallel lines,
\emph{J. Number Theory} \textbf{204} (2019), 1--24.
doi:\href{https://doi.org/10.1016/j.jnt.2019.03.019}{10.1016/j.jnt.2019.03.019}

\bibitem{Miy06}
\textsc{T.~Miyake},
\emph{Modular forms},
Springer Monogr. Math. (Springer-Verlag, Berlin, 2006).
doi:\href{https://doi.org/10.1007/3-540-29593-3}{10.1007/3-540-29593-3}

\bibitem{Nek94}
\textsc{J.~Nekov\'a\v r},
Be\u\i linson's conjectures,
in \emph{Motives} (Seattle, WA, 1991),
Proc. Sympos. Pure Math. \textbf{55} (Amer. Math. Soc., Providence, RI, 1994), 537--570.
doi:\href{https://doi.org/10.1090/pspum/055.1/1265544}{10.1090/pspum/055.1/1265544}

\bibitem{NS89}
\textsc{Yu.~P.~Nesterenko} and \textsc{A.~A.~Suslin},
Homology of the full linear group over a local ring, and Milnor's $K$-theory,
\emph{Math. USSR Izv.} \textbf{34} (1990), no.~1, 121--145.
doi:\href{https://doi.org/10.1070/IM1990v034n01ABEH000610}{10.1070/IM1990v034n01ABEH000610}

\bibitem{PARI}
The PARI~Group, PARI/GP version \texttt{2.15.0}, Univ. Bordeaux, 2022,
\href{https://pari.math.u-bordeaux.fr/}{https://pari.math.u-bordeaux.fr/}

\bibitem{PP13}
\textsc{V.~Paşol} and \textsc{A.~A.~Popa},
Modular forms and period polynomials,
\emph{Proc. Lond. Math. Soc. (3)} \textbf{107} (2013), no.~4, 713--743.
doi:\href{https://doi.org/10.1112/plms/pdt003}{10.1112/plms/pdt003}

\bibitem{Rud21}
\textsc{D.~Rudenko},
The strong Suslin reciprocity law,
\emph{Compos. Math.} \textbf{157} (2021), no.~4, 649--676.
doi:\href{https://doi.org/10.1112/s0010437x20007666}{10.1112/s0010437x20007666}

\bibitem{Sch74}
\textsc{B.~Schoeneberg},
\emph{Elliptic modular functions: an introduction},
Die Grundlehren der mathematischen Wissenschaften \textbf{203}
(Springer-Verlag, New York-Heidelberg, 1974).
doi:\href{https://doi.org/10.1007/978-3-642-65663-7}{10.1007/978-3-642-65663-7}

\bibitem{Sha11}
\textsc{R.~Sharifi},
A reciprocity map and the two-variable $p$-adic $L$-function,
\emph{Ann. of Math.} (2) \textbf{173} (2011), no.~1, 251--300.
doi:\href{https://doi.org/10.4007/annals.2011.173.1.7}{10.4007/annals.2011.173.1.7}

\bibitem{SV24}
\textsc{R.~Sharifi} and \textsc{A.~Venkatesh},
Eisenstein cocycles in motivic cohomology,
\emph{Compos. Math.} \textbf{160} (2024), no.~10, 2407--2479.
doi:\href{https://doi.org/10.1112/S0010437X24007322}{10.1112/S0010437X24007322}

\bibitem{Sil84}
\textsc{J.~H.~Silverman},
The $S$-unit equation over function fields,
\emph{Math. Proc. Cambridge Philos. Soc.} \textbf{95} (1984), no.~1, 3--4.
doi:\href{https://doi.org/10.1017/S0305004100061235}{10.1017/S0305004100061235}

\bibitem{Sil94}
\textsc{J.~H.~Silverman},
\emph{Advanced topics in the arithmetic of elliptic curves},
Grad. Texts in Math. \textbf{151} (Springer-Verlag, New York, 1994).

\bibitem{Sil09}
\textsc{J.~H.~Silverman},
\emph{The arithmetic of elliptic curves},
Second edition.
Grad. Texts in Math. \textbf{106} (Springer, Dordrecht, 2009).

\bibitem{Sou85}
\textsc{C.~Soulé},
Opérations en K-théorie algébrique,
\emph{Canad. J. Math.} \textbf{37} (1985), no.~3, 488--550.
doi:\href{https://doi.org/10.4153/CJM-1985-029-x}{10.4153/CJM-1985-029-x}

\bibitem{Ste82}
\textsc{G.~Stevens},
\emph{Arithmetic on modular curves},
Progress in Mathematics \textbf{20} (Birkhäuser Boston, Inc., Boston, MA, 1982).

\bibitem{Ste89}
\textsc{G.~Stevens},
The Eisenstein measure and real quadratic fields,
in \emph{Th\'{e}orie des nombres} (Quebec, PQ, 1987),
de Gruyter, Berlin, 1989, 887--927.

\bibitem{Sto81}
\textsc{W.~W.~Stothers},
Polynomial identities and Hauptmoduln,
\emph{Quart. J. Math. Oxford Ser.} (2) \textbf{32} (1981), no.~127, 349--370.
doi:\href{https://doi.org/10.1093/qmath/32.3.349}{10.1093/qmath/32.3.349}

\bibitem{Sus86}
\textsc{A.~A.~Suslin},
Algebraic $K$-theory of fields,
in \emph{Proceedings of the International Congress of Mathematicians}, Vol. 1 (Berkeley, Calif., 1986), Amer. Math. Soc., Providence, RI, 1987, 222--244.

\bibitem{Sus90}
\textsc{A.~A.~Suslin},
$K_3$ of a field and the Bloch group,
\emph{Proc. Steklov Inst. Math.} \textbf{183} (1991), 217--239.

\bibitem{Tam88}
\textsc{G.~Tamme},
The theorem of Riemann-Roch,
in \emph{Be\u\i linson's conjectures on special values of $L$-functions},
Perspect. Math. \textbf{4} (Academic Press, Boston, MA, 1988), 43--91.

\bibitem{Tot92}
\textsc{B.~Totaro},
Milnor $K$-theory is the simplest part of algebraic $K$-theory,
\emph{K-Theory} \textbf{6} (1992), no.~2, 177--189.
doi:\href{https://doi.org/10.1007/BF01771011}{10.1007/BF01771011}

\bibitem{Wan20}
\textsc{W.~Wang},
\emph{Regularized integrals and $L$-functions of modular forms via the Rogers--Zudilin method},
PhD thesis (École normale supérieure de Lyon, 2020), \href{https://tel.archives-ouvertes.fr/tel-02965542}{https://tel.archives-ouvertes.fr/tel-02965542}

\bibitem{Wei13}
\textsc{C.~A.~Weibel},
\emph{The $K$-book. An introduction to algebraic $K$-theory},
Graduate Studies in Mathematics \textbf{145} (American Mathematical Society, Providence, RI, 2013).

\bibitem{Yan04}
\textsc{Y.~Yang},
Transformation formulas for generalized Dedekind eta functions,
\emph{Bull. London Math. Soc.} \textbf{36} (2004), no.~5, 671--682.
doi:\href{https://doi.org/10.1112/S0024609304003510}{10.1112/S0024609304003510}

\bibitem{Zag91}
\textsc{D.~Zagier},
Polylogarithms, Dedekind zeta functions and the algebraic $K$-theory of fields,
in \emph{Arithmetic algebraic geometry} (Texel, 1989),
Progr. Math. \textbf{89} (Birkh\"auser Boston, Boston, MA, 1991), 391--430. 

\bibitem{Zha22}
\textsc{J.~Zhao},
On $N$-Unital Functions,
\emph{Preprint}, May 2022.
arXiv:\href{https://arxiv.org/abs/2103.15745v5}{2103.15745v5 [math.NT]}

\end{thebibliography}
\end{document}